\documentclass[11pt]{article}
\usepackage[letterpaper,margin=0.8in,headheight=14pt]{geometry}
\usepackage[T1]{fontenc}
\usepackage{newtxtext}
\usepackage{mathtools,amsthm}
\usepackage{newtxmath}
\usepackage{microtype,enumitem,fancyhdr}
\usepackage[colorlinks=true,linkcolor=blue,citecolor=blue,urlcolor=blue,hypertexnames=false]{hyperref}
\hypersetup{pdftitle={Polynomial Hessian equations},pdfauthor={Research notes}}
\newtheorem{theorem}{Theorem}[section]
\newtheorem{proposition}[theorem]{Proposition}
\newtheorem{lemma}[theorem]{Lemma}
\newtheorem{corollary}[theorem]{Corollary}
\theoremstyle{definition}
\newtheorem{definition}[theorem]{Definition}
\newtheorem{remark}[theorem]{Remark}

\theoremstyle{definition}
\newtheorem{assumption}{Assumption}[section]

\newcommand{\R}{\mathbb R}
\newcommand{\C}{\mathbb C}

\newcommand{\one}{\mathbf1}
\newcommand{\ddbar}{\sqrt{-1}\,\partial\bar\partial}
\newcommand{\Pol}{\operatorname{Pol}}
\newcommand{\tr}{\operatorname{tr}}
\newcommand{\Herm}{\operatorname{Herm}}
\newcommand{\SH}{\mathcal{SH}}

\newcommand{\E}{\mathcal E}

\newcommand{\CC}{\mathcal C}

\newcommand{\FF}{\mathcal F}
\newcommand{\s}{\mathbf s}
\newcommand{\D}{D_{\one}}
\newcommand{\frp}{\mathfrak p}
\newcommand{\K}{\mathcal K_d}

\newcommand{\Ga}{\mathrm{G}}
\DeclareMathOperator{\osc}{osc}

\DeclareMathOperator{\conv}{conv}

\setlist[enumerate,1]{label=(\roman*),leftmargin=2em}
\title{Solvability of complex Hessian equations corresponding to right-Noetherian polynomials}
\author{Junbang Liu, Yulun Xu}
\date{October 2026}

\begin{document}
\maketitle

\begin{abstract}
On compact Hermitian manifolds, we establish a framework using radial normalization for the solvability of complex Hessian equation in form type, corresponding to right-Noetherian polynomials whose coefficients may not be constant functions. The solvability of a radial normalized equation is characterized by the existence of a $\CC$-subsolution at radial slope, which extends the results of Guo-Song \cite{GS24} to this setting. 
\end{abstract}

\tableofcontents 
\section{Introduction}
Let $(X,\chi)$ be a compact connected Hermitian manifold of complex
dimension $n$, and let $\omega_0$ be a smooth real
$(1,1)$-form. For $\varphi\in C^\infty(X,\R)$, write
\[
\omega_\varphi=\omega_0+\ddbar\varphi,
\qquad
\Lambda_\varphi=\lambda_\chi(\omega_\varphi)
=\lambda\bigl[\chi^{i\bar l}(\omega_\varphi)_{j\bar l}\bigr].
\]
Let
\begin{equation}\label{e polynomial}
p_z(s)=s^d-\sum_{k=0}^{d-1}a_k(z)s^k,
\qquad 2\le d\le n,
\end{equation}
be a monic right-Noetherian polynomial for each $z\in X$, where
$a_k\in C^\infty(X,\R)$. 
We use the normalized ordinary polarization
\[
S_k(\lambda):=\frac{\sigma_k(\lambda)}{\binom nk},
\qquad
P_z(\lambda):=\Pol_n(p_z)(\lambda)
=S_d(\lambda)-\sum_{k=0}^{d-1}a_k(z)S_k(\lambda).
\]
We consider the corresponding complex
Hessian equation in form type:
\begin{equation}\label{e main eq}
\omega_\varphi^d\wedge\chi^{n-d}
-\sum_{k=0}^{d-1}a_k(z)\omega_\varphi^k\wedge\chi^{n-k}=0,
\end{equation}
which is equivalent to 
\begin{equation*}
    P_z(\lambda[\chi^{i \bar{j}} (\omega_{\varphi})_{l \bar{j}}])=0.
\end{equation*}

This includes the following equations:
\begin{enumerate}
\item The complex $k$-Hessian equation,
\begin{equation}\label{e com k hes}
\omega_\varphi^k\wedge\chi^{n-k}=e^{G(z)+C}\chi^n,
\qquad \Lambda_\varphi\in\Gamma_k.
\end{equation}
\item The complex Hessian quotient equation,
\begin{equation}\label{e com quo}
\omega_\varphi^k\wedge\chi^{n-k}
=e^{G(z)+C}\omega_\varphi^l\wedge\chi^{n-l},
\qquad \Lambda_\varphi\in\Gamma_k,
\qquad 0\le l<k\le n.
\end{equation}
\end{enumerate}
Here $G$ is prescribed, $C\in\R$ is unknown, and
\[
\Gamma_k=\{\lambda\in\R^n:\sigma_j(\lambda)>0,
\ 1\le j\le k\},\qquad \Gamma_n=(0,\infty)^n,
\]
where $\sigma_j$ denotes the $j$-th elementary symmetric polynomial; see \cite[Section 2]{CNS85} for the associated cones.

The analytic theory of Hessian equations rests on the interplay between
admissible cones, ellipticity, and concavity. The work of Caffarelli,
Nirenberg and Spruck \cite{CNS85} provides a fundamental framework for
equations depending on the eigenvalues of a Hessian. In complex geometry,
Yau's solution of the Calabi conjecture \cite{Yau78} established the
smooth solvability of the complex Monge--Amp\`ere equation on compact
K\"ahler manifolds. For the intermediate complex $k$-Hessian equations,
Hou, Ma and Wu obtained a second-order estimate in terms of the gradient
\cite{HMW10}. Dinew and Ko{\l}odziej subsequently proved a Liouville
theorem and used a blow-up argument to obtain the gradient estimate,
completing the smooth existence theory for positive, volume-normalized
data on compact K\"ahler manifolds \cite{DK17}. In the Hermitian setting,
Tosatti and Weinkove solved the complex Monge--Amp\`ere equation with an
unknown normalization constant \cite{TW10}, and Zhang established
the corresponding result for the standard complex $k$-Hessian equation
with the reference Hermitian metric as background \cite{Zhang17}.

The $J$-equation and inverse Hessian equations also arise naturally from
geometric flows. Donaldson's moment-map formulation \cite{Donaldson99}
led to the $J$-flow, whose convergence was characterized by Song and
Weinkove in terms of a positivity condition \cite{Song2004OnTC}.
Fang, Lai and Ma studied a class of fully nonlinear flows extending
this picture to inverse Hessian equations \cite{fang2010classfullynonlinearflow}.
Related eigenvalue equations appear in deformed Hermitian--Yang--Mills
geometry, where Collins, Jacob and Yau obtained a priori estimates and
algebraic obstructions \cite{Collins201511FW}.

A unifying analytic tool is the notion of a $\mathcal C$-subsolution.
Sz\'{e}kelyhidi developed a priori estimates for a general class of fully
nonlinear equations on compact Hermitian manifolds under this condition,
and applied the method to Hessian quotient equations on K\"ahler manifolds
\cite{Sze18}. The higher-regularity theory can be organized using the
Evans--Krylov type estimates of Tosatti, Wang, Weinkove and Yang
\cite{TWWY15}. More recently, Guo and Song introduced an analytic
sup-slope and characterized solvability for a class of concave elliptic
Hessian-type equations by a subsolution condition at that slope
\cite{GS24}. Their work also treats the nonconstant $J$-equation and
clarifies the role of the unknown normalization constant.

In a complementary direction, numerical criteria seek to express
solvability through intersection inequalities. Demailly and P\u{a}un
gave a numerical characterization of the K\"ahler cone
\cite{JPDemaillyPaun}. For the $J$-equation, Collins and
Sz\'{e}kelyhidi established a numerical criterion in the toric setting
\cite{CollinsGabor}, and Chen obtained the corresponding result on
arbitrary compact K\"ahler manifolds \cite{GaoChen2021}.
Related Nakai--Moishezon type criteria have been developed for
generalized Monge--Amp\`ere and Hessian equations
\cite{Datar2020ANC,song2020nakaimoishezoncriterionscomplexhessian},
for the supercritical deformed Hermitian--Yang--Mills equation
\cite{ChuLeeTakhashi}, and for fully nonlinear equations involving
differential forms \cite{Fang2023OnAF}.

Recent work extends these numerical questions to further Hessian and
quotient equations. Fu, Zhang and Zhang proved a numerical criterion
for the complex $2$-Hessian equation in dimension three, with consequences
for the associated Hessian quotient equation \cite{2026arXiv260711296F}.
Chen, Nie and Xu obtained a criterion for complex Hessian type equations
on projective manifolds \cite{CNX26}, while Fu, Li, Zhang and Zhang
studied Nakai--Moishezon type criteria for Hessian type equations on
compact K\"ahler manifolds \cite{fu2026nakaimoishezontypecriteriahessian}.

Beyond individual Hessian and quotient equations, a central problem is
to identify the appropriate elliptic branch for a polynomial combination
of symmetric functions. Lin introduced right-Noetherian polynomials
in the study of general inverse $\sigma_k$ equations and established
convexity results for the associated strictly $\Upsilon$-stable level
hypersurfaces \cite{Lin23,Lin24corr}. He subsequently proved solvability
on compact K\"ahler manifolds under a $\mathcal C$-subsolution condition
and integral compatibility for constant positive-order coefficients,
allowing the zeroth-order coefficient to vary \cite{Lin23solv}.

Fang and Ma placed these structures in the broader theory of
G{\aa}rding polynomials \cite{FM26a}. Their theory of ideal G{\aa}rding
polynomials supplies convex distinguished components, polarization
results, and quotient concavity \cite{FM26b}. These algebraic tools,
together with the subsolution and slope methods above, provide the
background for studying \eqref{e main eq} with position-dependent
coefficients. In particular, they motivate treating the choice of
elliptic branch and the normalization constant as linked parts of the
solvability problem.

\subsection{Hermitian case}
In the Hermitian case, we make the following assumptions.
\begin{assumption}\label{ass1}
    \begin{enumerate}
        \item For any $z\in X$, $p_z$ is strictly right-Noetherian.
        \item $a_i(z)$, $r_i(z)$ are smooth functions for $0 \le i \le d-1$. Here $r_i(z)$ is the largest real root of $p_z^{(i)}.$
        \item $a_{d-1}\ge 0$.
    \end{enumerate}
\end{assumption}
Lemma \ref{lem ass 2} provides  situations where the assumption that $r_i(z)$ are smooth functions can be implied by other assumptions in the Assumption \ref{ass1}. The assumption (iii) is equivalent to a radial condition $\lambda\cdot\nabla P_z(\lambda)>0$ on $\partial C_{P_z}$, which plays an important role in the analysis. We will discuss this condition in section~\ref{sec:radial-condition} in details.

\begin{theorem}\label{thm:radial}
Let $(X,\chi)$ be a compact connected Hermitian manifold and let $\omega_0$  be a smooth real $(1,1)$-form on $X$. Suppose Assumption~\ref{ass1} holds and
$\SH_d(X,\omega_0)\ne\varnothing$. The following are equivalent:
\begin{enumerate}
\item There is $\varphi\in C^\infty(X,\R) \cap \SH_d(X,\omega_0)$, unique modulo constants, and $c>0$ such that
\begin{equation}\label{e thm:radial}
P_z(c^{-1}\Lambda_\varphi)=0,\qquad
c^{-1}\Lambda_\varphi(z)\in \mathcal{C}_{P_z}^{(1)}\quad(z\in X).
\end{equation}
\item There is a $\mathcal C$-subsolution at the radial slope $\sigma$.
\end{enumerate}
If one of the above holds, then $c=\sigma$ and it attains both radial slopes:
\begin{equation}\label{eq:23}
\begin{aligned}
\kappa_z(\Lambda_\varphi)\equiv\sigma 
&=\inf_{v\in\SH_d(X,\omega_0)}\max_X\kappa_z(\Lambda_v)\\
&=\sup_{v\in\SH_d(X,\omega_0)}\min_X\kappa_z(\Lambda_v).
\end{aligned}
\end{equation}
\end{theorem}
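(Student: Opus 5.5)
We follow the strategy of Guo--Song \cite{GS24}, reformulated through the radial function. For $\lambda$ in the positive cone associated to $P_z$, the radial slope $\kappa_z(\lambda)$ is the unique $c>0$ with $P_z(c^{-1}\lambda)=0$ and $c^{-1}\lambda\in\partial\mathcal C^{(1)}_{P_z}$. The radial condition $\lambda\cdot\nabla P_z(\lambda)>0$ on $\partial\mathcal C_{P_z}$, which is equivalent to Assumption~\ref{ass1}(iii), makes $t\mapsto P_z(t^{-1}\lambda)$ strictly monotone across the boundary. Hence $\kappa_z$ is well defined, smooth in $(z,\lambda)$ (this uses the smoothness of $a_i,r_i$), homogeneous of degree one, increasing in each eigenvalue, and—by the convexity of the distinguished component $\mathcal C^{(1)}_{P_z}$ (Lin, Fang--Ma)—concave, since its superlevel sets are the dilates $c\,\mathcal C^{(1)}_{P_z}$. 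Thus \eqref{e thm:radial} is equivalent to the normalized equation $\kappa_z(\Lambda_\varphi)=c$. This is a concave elliptic equation with an unknown constant, of the type treated in \cite{Sze18,GS24}. We define $\sigma$ as the infimum in \eqref{eq:23}.

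\textbf{(i)$\Rightarrow$(ii) and identification of $c$.} Suppose $\kappa_z(\Lambda_\varphi)\equiv c$. First, $c=\sigma$ by a comparison argument. Let $v\in\SH_d(X,\omega_0)$ and consider a maximum point of $v-\varphi$. At that point $\Lambda_v\le\Lambda_\varphi$ in the matrix sense, so monotonicity gives $\kappa_z(\Lambda_v)\le c$ there, hence $\min_X\kappa_z(\Lambda_v)\le c$. At a minimum point of $v-\varphi$ we similarly get $\max_X\kappa_z(\Lambda_v)\ge c$. Taking $v=\varphi$ shows that both extremal quantities in \eqref{eq:23} are attained and equal $c$, which proves \eqref{eq:23}. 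Next, $\varphi$ itself is a $\mathcal C$-subsolution at slope $\sigma$. The $\mathcal C$-subsolution condition asks that $(\Lambda_\varphi+\Gamma_n)\cap\{\kappa_z\le\sigma\}$ be bounded, and this follows from strict right-Noetherianity: along any direction $e_i$, $\kappa_z(\Lambda+te_i)\to\infty$ as $t\to\infty$, because $\partial\mathcal C^{(1)}$ has no asymptotic direction along coordinate axes. Uniqueness modulo constants follows from the strong maximum principle applied to the linearization of $\kappa_z(\Lambda_{\varphi_1})=\kappa_z(\Lambda_{\varphi_2})$.

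\textbf{(ii)$\Rightarrow$(i).} Let $\underline u$ be a $\mathcal C$-subsolution at slope $\sigma$. We run a continuity method in the style of \cite{GS24}. For $t\in[0,1]$, solve
\[
\kappa_z(\Lambda_{\varphi_t})=(1-t)\kappa_z(\Lambda_{\underline u})+t\,\sigma+c_t
\]
with an unknown constant $c_t$. At $t=0$ the solution is $\varphi=\underline u$, $c_0=0$. Openness follows from the implicit function theorem. The linearized operator is elliptic with kernel consisting of constants, and we handle the cokernel using the Gauduchon-type adjoint, as in \cite{TW10}; the unknown constant absorbs the one-dimensional obstruction.

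For closedness we need bounds on $c_t$ and uniform estimates. The comparison argument above gives $\min(\text{rhs})\le\sigma\le\max(\text{rhs})$, which bounds $c_t$; more precisely, $c_t\le 0$, so the right-hand side stays at most $\sigma$. Therefore $\underline u$ remains a $\mathcal C$-subsolution uniformly along the path. We then invoke Sz\'ekelyhidi's a priori estimates \cite{Sze18}: the $C^0$ estimate via ABP, the $C^2$ estimate in terms of the gradient, and a blow-up/Liouville argument for the gradient \cite{DK17}. Higher regularity comes from Evans--Krylov \cite{TWWY15}. We must check the structural hypotheses on $f=\kappa_z$: concavity, ellipticity, the condition $\sup_{\partial\Gamma}f<\inf f$ on interior level sets, and $z$-dependence that is controlled in $C^2$. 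At $t=1$ the equation reads $\kappa_z(\Lambda_\varphi)=\sigma+c_1$, and the characterization of $\sigma$ forces $c_1=0$.

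\textbf{Main obstacle.} The hardest step is verifying that $\kappa_z$ satisfies the Sz\'ekelyhidi structure uniformly in $z$. This includes the unboundedness $\lim_{t\to\infty}\kappa_z(\lambda+te_i)=\infty$ and uniform derivative bounds near $\partial\mathcal C^{(1)}_{P_z}$, where the roots $r_i(z)$ enter. We would derive both from the strict right-Noetherian property, which gives uniform positivity of $\lambda\cdot\nabla P_z$ on $\partial\mathcal C_{P_z}$, together with compactness of $X$.
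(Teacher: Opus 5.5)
The identification $c=\sigma$ by comparison, the openness argument and the uniqueness argument are essentially as in the paper. The continuity path in (ii)$\Rightarrow$(i), however, does not work.

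You start at $\varphi_0=\underline u$ with right-hand side $(1-t)\kappa_z(\Lambda_{\underline u})+t\sigma+c_t$. But a $\mathcal C$-subsolution is not required to be admissible. By Proposition~\ref{prop:subsolution} the condition is only $\sigma^{-1}\Lambda_{\underline u}(z)\in\mathcal C^{(1)}_{P_z}$, and this set is in general strictly larger than $\Gamma_d$. For example, with $n=3$ and $p=s^2-e^{\psi}$ one has $\mathcal C^{(1)}_{P_z}=\{\lambda_i+\lambda_j>0,\ i\ne j\}$, which contains $(1,1,-0.9)\notin\Gamma_2$. So $\kappa_z(\Lambda_{\underline u})$ need not be defined, and your path need not have a starting point.

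Even for admissible $\underline u$, the closedness step fails as written. Applying Lemma~\ref{res:7-4} to $\varphi_t$ gives $\max_X\rho_t\ge\sigma$. So ``the right-hand side stays at most $\sigma$'' is false unless $\rho_t\equiv\sigma$, and $c_t\le 0$ does not follow. What you actually get is only
$$\max_X\rho_t\le\sigma+(1-t)\osc_X\kappa_z(\Lambda_{\underline u}).$$
Hypothesis (ii) makes $\underline u$ a subsolution only at levels $\rho\le\sigma+\varepsilon$: the dilates $\rho\,\mathcal C^{(1)}_{P_z}$ shrink as $\rho$ grows (Proposition~\ref{res:3-1}). So nothing keeps $\underline u$ a uniform subsolution along your path, and the $C^0$ and $C^2$ estimates are lost.

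The missing idea is to start from an admissible function whose radial scale is already within $\varepsilon$ of $\sigma$. By the inf--max definition of $\sigma$ there is $v\in\SH_d(X,\omega_0)$ with $h=\kappa_z(\Lambda_v)$ and $\max_Xh<\sigma+\varepsilon$. Run $\kappa_z(\Lambda_{\varphi_t})=c_th^{1-t}$ starting from $(v,1)$. Comparing at the maximum and minimum of $\varphi_t-v$ gives $(\min_Xh)^t\le c_t\le(\max_Xh)^t$. Hence $\min_Xh\le\rho_t\le\max_Xh<\sigma+\varepsilon$, and $\underline u$ stays a common subsolution with uniform slice constants for all $t$.

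Three further points need correcting.

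First, $\kappa_z$ must be defined through $\partial\mathcal C_{P_z}$, not $\partial\mathcal C^{(1)}_{P_z}$. On the latter $P_z\le p_z(r_1)<0$ (Lemma~\ref{lem:regular-components}), so your defining condition is never met. The superlevel sets are $t\,\overline{\mathcal C_{P_z}}$. You also need $\overline{\mathcal C_{P_z}}\subset\Gamma_d$ and $\bigcup_t t\,\mathcal C_{P_z}=\Gamma_d$ (Proposition~\ref{prop:radial-nesting}) to know that $\kappa_z$ lives on $\Gamma_d$ and that elliptic solutions lie in $\SH_d(X,\omega_0)$.

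Second, in (i)$\Rightarrow$(ii) the claim $\kappa_z(\Lambda+te_i)\to\infty$ is false for the quotient equations covered by the theorem. For $P_z=S_k-e^{\psi}S_l$, multi-affinity gives
$$\kappa_z(\Lambda+te_i)\to\bigl(e^{-\psi}\partial_iS_k(\Lambda)/\partial_iS_l(\Lambda)\bigr)^{1/(k-l)}<\infty.$$
The claim is also unnecessary. The relevant set is the level set $\{\kappa_z=\sigma\}$, not $\{\kappa_z\le\sigma\}$, and $\{\kappa_z=\sigma\}\cap(\Lambda_\varphi+\Gamma_n^+)$ is empty by strict monotonicity.

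Third, Sz\'ekelyhidi's estimates are proved for a $z$-independent operator, whereas here $\kappa_z$ depends on $z$ through $\mathbf s=\rho\mathbf r$. Differentiating the equation twice produces the cross term $2\operatorname{Re}\sum_{i,\alpha}\partial_i\partial_{s_\alpha}P_{\mathbf s}\,g_{i\bar i,p}\,s_{\alpha,\bar p}$. This term is linear in third derivatives of $\varphi$, and positivity of $\lambda\cdot\nabla P_z$ plus compactness does not control it. The paper absorbs it using two facts:
\begin{enumerate}
\item the joint concavity of $P_{\mathbf s}^{1/d}$ in $(\lambda,\mathbf s)$ (Section~\ref{sec:joint-concavity}), which makes the full second variation nonpositive on tangent vectors;
\item the bounds $0\le-\partial_{s_j}P_{\mathbf s}\le\D P_{\mathbf s}$ from Lemma~\ref{lem gradient P for}.
\end{enumerate}
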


The definition of $\SH_d(X,\omega_0)$, $\kappa_z(\Lambda_\varphi)$ and radial slopes are given in Preliminary section. One Key point in the proof of the Theorem \ref{thm:radial} is the concavity that Fang-Ma established in \cite{FM26b}. We generalize such concavity to a joint concavity such that we can deal with the non-constant coefficient case.

We use radial normalization in (\ref{e thm:radial}) in order to answer the following questions:
\begin{enumerate}
    \item The set of admissible solutions to (\ref{e com k hes}) and (\ref{e com quo}) are $$\{\varphi \in C^{\infty}(X): \lambda[\chi^{i \bar{l}}(\omega_{\varphi})_{j \bar{l}}] \in \Gamma_k\}.$$ What is the correct definition of admissible solutions to (\ref{e main eq})?
    \item How should a normalization constant enter \eqref{e com k hes}?
For the complex $k$-Hessian equation and complex Hessian quotient equation, $C$ appears as a solvability constant which admits a min-max characterization by \cite{GS24}. 
\end{enumerate}

The framework used in the Theorem \ref{thm:radial} covers the results on complex $k$-Hessian equations \cite{Sze18,Zhang17} and complex quotient equations \cite{Sze18}. See the section \ref{sec:source-9} for discussion.

\subsection{K\"ahler case}
In the K\"ahler case, we use the following assumption:
\begin{assumption}\label{ass2}
    \begin{enumerate}
        \item For any $z\in X$, $p_z$ is strictly right-Noetherian.
        \item $a_i(z)$ are constant functions for $1\le i \le d-1$, and $a_0(z)$ is a smooth function.
    \end{enumerate}
\end{assumption}

Then, applying the Theorem \ref{thm:radial}, we can reprove the results by Fang-Ma \cite{fang2026spectralgaardingelliptic}:

\begin{theorem}\label{main thm 2}
Let $(X,\chi)$ be a compact connected K\"ahler manifold of complex
dimension $n$, let $2\le d\le n$, and let $\omega_0$ be a smooth
closed real $(1,1)$-form. Suppose Assumption~\ref{ass2} holds and
\begin{equation}\label{eq:kahler-fixed-compatibility}
\int_X\omega_0^d\wedge\chi^{n-d}
=\sum_{k=1}^{d-1}a_k\int_X\omega_0^k\wedge\chi^{n-k}
+\int_Xa_0\chi^n.
\end{equation}
Put $\gamma:=\frac{a_{d-1}}d=r_{d-1}$.
The following are equivalent:
\begin{enumerate}
\item There exists
$\varphi\in\SH_d(X,\omega_0-\gamma\chi)$, unique modulo constants, such that
\[
P_z(\Lambda_\varphi)=0,\qquad
\Lambda_\varphi(z)=\lambda[\chi^{i \bar{j}}(\omega_{\varphi})_{l \bar{j}}]\in \mathcal{C}_{P_z}^{(1)}\quad(z\in X).
\]
\item There exists $\underline\varphi\in C^2(X,\R)$ such that
\[
\Lambda_{\underline\varphi}(z)\in \mathcal{C}_{P_z}^{(1)}
\quad(z\in X).
\]
\end{enumerate}
If $a_{d-1}\ge0$, the solution also belongs to
$\SH_d(X,\omega_0)$.
\end{theorem}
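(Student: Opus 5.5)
Assume (ii) and deduce (i) via Theorem~\ref{thm:radial}, after a shift removing the $a_{d-1}$ sign issue; the converse (i)$\Rightarrow$(ii) is trivial (take $\underline\varphi=\varphi$).

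\textbf{Shift.} Set $\tilde\omega_0:=\omega_0-\gamma\chi$ and $\tilde p_z(s):=p_z(s+\gamma)$. Since $\gamma=a_{d-1}/d$, the coefficient of $s^{d-1}$ in $\tilde p_z$ vanishes, so $\tilde a_{d-1}=0$. Assumption~\ref{ass1}(iii) then holds automatically, whatever the sign of $a_{d-1}$. The polarization commutes with translation along $\one$, by the normalization $S_k=\sigma_k/\binom nk$. Hence $\tilde P_z(\lambda)=P_z(\lambda+\gamma\one)$, $\mathcal C^{(1)}_{\tilde P_z}=\mathcal C^{(1)}_{P_z}-\gamma\one$, and $\lambda_\chi(\tilde\omega_0+\ddbar\varphi)=\Lambda_\varphi-\gamma\one$. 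Strict right-Noetherianity is preserved by translation. The roots $\tilde r_i=r_i-\gamma$ are smooth: only $\tilde a_0$ depends on $z$, so $r_i$ is constant for $i\ge1$, and $r_0$ is a simple largest root (strictness), hence smooth by the implicit function theorem (this is the content of Lemma~\ref{lem ass 2}). So Assumption~\ref{ass1} holds for $(\tilde\omega_0,\tilde p)$.

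\textbf{Applying Theorem~\ref{thm:radial}.}
\begin{enumerate}
\item \emph{Nonemptiness.} The subsolution $\underline\varphi$ has $\Lambda_{\underline\varphi}-\gamma\one\in\mathcal C^{(1)}_{\tilde P_z}\subset\Gamma_d$, so $\SH_d(X,\tilde\omega_0)\ne\varnothing$.
\item \emph{Constant-coefficient structure.} Write $\tilde P_z=Q-\tilde a_0(z)$ with $Q=S_d-\sum_{k=1}^{d-2}\tilde a_kS_k$ constant. On the cone, $\kappa_z(\lambda)$ is the $t$ with $Q(\lambda/t)=\tilde a_0(z)$.
\item \emph{Identify $\sigma=1$ by integration.} Theorem~\ref{thm:radial} would produce a solution of $\tilde P_z(c^{-1}\tilde\Lambda_\varphi)=0$ with $c=\sigma$. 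Since $X$ is K\"ahler and all forms are closed, $\int_X\tilde\omega_\varphi^k\wedge\chi^{n-k}$ is independent of $\varphi$. Expanding $(\tilde\omega_0+\gamma\chi)^k$ binomially shows that \eqref{eq:kahler-fixed-compatibility} is exactly $\int_X\tilde P_z(\tilde\Lambda_\varphi)\chi^n=0$ for every $\varphi$. Evaluate on the scaled solution: $\tilde P_z(\tilde\Lambda_\varphi/\sigma)=0$ pointwise. Along rays in the cone, $t\mapsto\tilde P_z(t\lambda)$ is strictly increasing for $t$ above the root (radial condition). Then $\sigma>1$ forces $\tilde P_z(\tilde\Lambda_\varphi)<0$ everywhere, contradicting the integral identity, and $\sigma<1$ likewise gives a contradiction. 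So $\sigma=1$.
\item \emph{Subsolution at slope $1$.} It remains to check that $\underline\varphi$ is a $\mathcal C$-subsolution at slope $\sigma=1$, i.e.\ that the relevant recession cone at level $1$ contains $\tilde\Lambda_{\underline\varphi}+\Gamma_n$ intersected with the boundedness condition. Cone membership $\tilde\Lambda_{\underline\varphi}\in\mathcal C^{(1)}_{\tilde P_z}$ should imply this via the Fang--Ma convexity of the distinguished component $\mathcal C^{(1)}$ \cite{FM26b}: the set $\{\mu\in\mathcal C^{(1)}:\tilde P_z(\mu)\le0\}\cap(\lambda+\Gamma_n)$ is compact when $\lambda\in\mathcal C^{(1)}_{\tilde P_z}$. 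Here one uses that $\mathcal C^{(1)}$ is the set where all derivative polarizations are positive.
\end{enumerate}
Theorem~\ref{thm:radial} then gives $\varphi\in\SH_d(X,\omega_0-\gamma\chi)$, unique mod constants, with $\tilde P_z(\tilde\Lambda_\varphi)=0$, i.e.\ $P_z(\Lambda_\varphi)=0$ and $\Lambda_\varphi\in\mathcal C^{(1)}_{P_z}$.

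\textbf{Last claim.} If $a_{d-1}\ge0$ then $\gamma\ge0$. Since $\tilde\Lambda_\varphi\in\Gamma_d$ and $\Gamma_d+\Gamma_n\subset\Gamma_d$, we get $\Lambda_\varphi=\tilde\Lambda_\varphi+\gamma\one\in\Gamma_d$, so $\varphi\in\SH_d(X,\omega_0)$.

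\textbf{Main obstacle.} The hardest step is showing $\sigma=1$ rigorously. This needs the sign of $\tilde P_z$ off its zero set in $\mathcal C^{(1)}$ along rays, which follows from the radial condition $\lambda\cdot\nabla\tilde P_z>0$ on $\partial\mathcal C_{\tilde P_z}$ together with the level-set structure. The second difficulty is matching the definition of $\mathcal C$-subsolution at radial slope with mere cone membership.
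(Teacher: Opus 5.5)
There is a genuine gap, and it is the circularity in your steps (iii)--(iv). Theorem~\ref{thm:radial} needs a $\mathcal C$-subsolution at the radial slope $\sigma$ of the shifted data. By Proposition~\ref{prop:subsolution}, this means $\sigma^{-1}\widetilde\Lambda_{\underline\varphi}(z)\in\mathcal C^{(1)}_{\widetilde P_z}$, where $\widetilde\Lambda_{\underline\varphi}=\Lambda_{\underline\varphi}-\gamma\one$.

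\begin{itemize}
\item Hypothesis (ii) is exactly this condition at slope $1$ (Proposition~\ref{prop:subsolution} with $\rho\equiv1$). So your step (iv) is immediate but beside the point.
\item By the nesting used in Proposition~\ref{res:3-1}, the slope-$1$ condition transfers to slope $\sigma$ when $\sigma\le1$. Nothing gives it when $\sigma>1$.
\item Your identification $\sigma=1$ integrates the solution that Theorem~\ref{thm:radial} would produce. It therefore already assumes the subsolution at $\sigma$, and the argument is circular.
\item Without a solution, compatibility gives only the wrong inequality. If $\kappa_z(\widetilde\Lambda_v)>1$ everywhere, then $\widetilde P_z(\widetilde\Lambda_v)>0$ everywhere, so $\sup_v\min_X\kappa_z(\widetilde\Lambda_v)\le1$. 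Nothing yields $\sigma=\inf_v\max_X\kappa_z(\widetilde\Lambda_v)\le1$, which is essentially as hard as existence itself.
\end{itemize}

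Three further problems:
\begin{itemize}
\item Step (i) fails: $\mathcal C^{(1)}_{\widetilde P_z}\not\subset\Gamma_d$ in general. For $n=3$, $d=2$, $\widetilde P=S_2-a_0$, the derived cone is $\{\lambda:\sum_{j\ne i}\lambda_j>0\ \text{for all } i\}$. It contains $(1,1,-0.9)\notin\Gamma_2$. So $\SH_d(X,\omega_0-\gamma\chi)\ne\varnothing$ is not available a priori; the paper explicitly notes that the reference eigenvalues need not lie in $\Gamma_d$.
\item Even granting a solution at some $\sigma<1$, monotonicity of $t\mapsto\widetilde P_z(tx)$ above $t=1$ says nothing below it. The set $\sigma\,\partial\mathcal C_{\widetilde P_z}$ need not lie in $\mathcal C^{(1)}_{\widetilde P_z}$, and $\widetilde P_z$ can be positive near the origin. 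For example, root sequence $(3/2,1,0)$ gives $\widetilde p(s)=s^3-3s+9/8$, so $\widetilde P(0)=9/8>0$.
\item For $\sigma>1$ the sign is $\widetilde P_z(\widetilde\Lambda_\varphi)>0$, not $<0$, though this still gives a contradiction.
\end{itemize}

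The missing idea is a deformation into a regime where the radial theorem does apply, followed by a return along the \emph{unscaled} equation. The paper replaces $\widetilde\omega_0$ by $\widetilde\omega_0+\tau\chi$. It subtracts $\ell(\tau)=V^{-1}\int_X\widetilde P_z(\widetilde\Lambda_{\underline\varphi}+\tau\one)\chi^n$ from the constant coefficient. This preserves compatibility (Stokes plus constant positive-order coefficients) and Assumption~\ref{ass1}.

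For large $\tau$:
\begin{itemize}
\item the reference form is positive, so the admissible class is nonempty;
\item $\sigma_\tau\to1$;
\item $\sigma_\tau^{-1}(\widetilde\Lambda_{\underline\varphi}+\tau\one)$ lies in the derived cone, so Theorem~\ref{thm:radial} applies.
\end{itemize}
Integrating the resulting equation gives $\mathcal R_\tau(c_\tau)=0=\mathcal R_\tau(1)$. Since $\mathcal R_\tau$ is strictly monotone on $[1/2,2]$ for large $\tau$, this forces $c_\tau=1$.

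The paper then runs a continuity method in $\tau\in[0,T]$ for $\widetilde P_z-\ell(\tau)=0$ at scale $1$. It adjoins an auxiliary additive constant, which compatibility forces to vanish. The a priori estimates come from the slope-$1$ subsolution $\widetilde\Lambda_{\underline\varphi}+\tau\one$, which is upward stable in $\tau$. The radial slope at $\tau=0$ is never needed. Your shift and the final admissibility remark do agree with the paper.
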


The paper is organized as follows. Section~2 recalls right-Noetherian
polynomials, their polarizations, and the required domination results.
Section~\ref{sec:radial-condition} characterizes the radial condition,
and Section~\ref{sec:geometry} studies the polynomial components and
constructs the radial scale. Section~\ref{sec:joint-concavity}
establishes the joint concavity results used in the analysis.
Section~\ref{sec:concavity} derives the a priori estimates, and
Section~\ref{sec:continuity} proves Theorem~\ref{thm:radial}.
Section~\ref{sec:source-9} treats the Hermitian complex $k$-Hessian equations and the complex quotient equations. The final section discusses the K\"ahler case.

\bigskip

\noindent{\bf Tool and computational resource disclosure:}
ChatGPT 6-Astra was used to perform literature review and as an adversarial reader and improve writing of the arguments. ChatGPT suggested that (\ref{eq:17}) is useful for radial normalization.  The authors retain exclusive responsibility for the correctness, accuracy, and validity of the results presented in this work.

\section{Preliminary}

\subsection{Distinguished component}
Let $\mathcal{C}_P$ denote the connected component of $\{P>0\}$
containing all sufficiently large positive diagonal points.
The first derived component is defined as $\mathcal{C}_{P_z}^{(1)}=\bigcap_{i=1}^n \mathcal{C}_{\partial_iP_z}$. 
A solution is \emph{elliptic} if
$c^{-1}\Lambda_\varphi(z)\in \mathcal{C}_{P_z}^{(1)}$ for every $z$.
Elliptic solution to $P_z(c^{-1}\Lambda_\varphi)=0$ is equivalent to
$c^{-1}\Lambda_\varphi(z)\in\partial \mathcal{C}_{P_z}$.

\subsection{Radial scale}
Under Assumption~\ref{ass1}, the radial scale is defined by
\begin{equation}\label{e radial scale}
\kappa_z(\lambda)>0,\qquad
\kappa_z(\lambda)^{-1}\lambda\in\partial \mathcal{C}_{P_z},
\qquad (z,\lambda)\in X\times\Gamma_d.
\end{equation}
Its existence, uniqueness, smoothness, and concavity are proved in
Lemma~\ref{lem:radial-scale}. Each $\kappa_z$ is homogeneous of degree one
and strictly increasing in every coordinate.

Put
\begin{equation}\label{eq:2}
\SH_d(X,\omega_0)=
\{\varphi\in C^\infty(X,\R):\Lambda_\varphi(z)\in\Gamma_d
\text{ for every }z\in X\}.
\end{equation}
When this class is nonempty, define its radial slope by
\begin{equation}\label{eq:20}
\sigma=\inf_{\varphi\in\SH_d(X,\omega_0)}
\max_X\kappa_z(\Lambda_\varphi),\qquad 0<\sigma<\infty.
\end{equation}
The positivity and finiteness follow from Lemma~\ref{res:7-4}.
The admissible class is independent of $P_z$, whereas
$\kappa$ and $\sigma$ depend on its coefficients.

The following definition uses the bounded-slice notion of \cite[Definition 1]{Sze18}.
\begin{definition}[C-subsolution at the radial slope]\label{def:radial-subsolution}
Assume $\SH_d(X,\omega_0)\ne\varnothing$. A function $\underline u\in C^2(X,\R)$ is a C-subsolution at $\sigma$ if, for every $z\in X$, the set
\begin{equation}
\{\lambda\in\Gamma_d:\kappa_z(\lambda)=\sigma,\quad
\lambda-\Lambda_{\underline u}(z)\in\Gamma_n^+\}
\label{eq:21}
\end{equation}
is bounded.  
\end{definition}
We remark that a $\mathcal{C}$-subsolution $\underline\varphi$ need not be admissible. 

\subsection{G\aa rding polynomial}
\begin{definition}[{\cite[Definitions 2.2 and 2.3]{FM26a}}]
For \(n\ge1\), write
\begin{align*}
  \Gamma_n^+
  &=\{x=(x_1,\ldots,x_n)\in\R^n:x_i>0\text{ for every }i\},\\
  \overline{\Gamma_n^+}
  &=\{x=(x_1,\ldots,x_n)\in\R^n:x_i\ge0\text{ for every }i\}.
\end{align*}
A set \(\mathcal{C} \subseteq\R^n\) satisfies the \emph{positive ray property}
\emph{(PRT)} if
\[
  \mathcal{C}+\overline{\Gamma_n^+}\subseteq \mathcal{C}.
\]
It satisfies the \emph{negative ray property} \emph{(NRT)} if, for
every \(x\in \mathcal{C}\), the set
\[
  (\{x\}-\overline{\Gamma_n^+})\cap \mathcal{C}
\]
is bounded.
\end{definition}

\begin{definition}[Theorem 1.1(3) of
\cite{FM26a}]
\label{FMGdefinition}
Let \(p\in\R[x_1,\ldots,x_n]\) be nonzero.  For a
multi-index
\(\alpha=(\alpha_1,\ldots,\alpha_n)\in\mathbb Z_{\ge0}^n\), write
\[
  \partial^\alpha p
  =
  \frac{\partial^{|\alpha|}p}
       {\partial x_1^{\alpha_1}\cdots\partial x_n^{\alpha_n}},
  \qquad |\alpha|=\alpha_1+\cdots+\alpha_n.
\]
Then \(p\) is called a \emph{Fang-Ma-G{\aa}rding polynomial} if and only if,
for every \(\alpha\) such that \(\partial^\alpha p\not\equiv0\),
\begin{enumerate}[label=\textup{(\roman*)}]
  \item the set \(\{\partial^\alpha p>0\}\) has a unique connected
  component \(\mathcal{C}_{\partial^\alpha p}\) satisfying PRT; and
  \item this component is contained in the corresponding components
  of all its first partial derivatives:
  \[
    \mathcal{C}_{\partial^\alpha p}
    \subseteq \mathcal{C}_{\partial_i\partial^\alpha p}
    \qquad(1\le i\le n).
  \]
\end{enumerate}
Here \(\mathcal{C}_h=\R^n\) when \(h\equiv c>0\), and by convention \(\mathcal{C}_0=\R^n\).
The component \(\mathcal{C}_p\) is called the \emph{Fang-Ma-G{\aa}rding component} of
\(p\), which is a generalization of the $\Upsilon$-stable component of \cite{Lin23}.  We denote the class of such polynomials in \(n\) variables by
\(\Ga_n\), and put \(\Ga=\bigcup_{n\ge1}\Ga_n\).
\end{definition}

\begin{definition}(Definition 1.1 of \cite{FM26b})
Denote by $\mathrm{G}_n^d$ the set of $n$-variate G{\aa}rding
polynomials of degree $d$. We define the ideal G{\aa}rding
polynomials recursively as follows. Set
\[
\mathrm{I}_n^0:=\{f\equiv c:c\geq0\},
\qquad
\mathrm{I}_n^1:=\mathrm{G}_n^1.
\]
For $d\geq2$, let
\[
\mathrm{I}_n^d
:=
\left\{
f:
\partial_i f\in\mathrm{I}_n^{d-1}\ \forall i\in[n],\
\varnothing\neq\mathcal{C}_f
\subset\bigcap_{i=1}^{n}\mathcal{C}_{\partial_i f},\
\mathcal{C}_f\text{ is convex}
\right\}.
\]

\end{definition}

\begin{definition}[{Definitions 2.1} of \cite{Lin23}]
\label{right-Noetherian}
For a nonconstant univariate polynomial \(h\), let \(r(h)\) denote its
largest real root, provided such a root exists.  If
\(\deg h=d\), its \emph{root sequence} is
\[
  R(h)=\bigl(r(h),r(h'),\ldots,r(h^{(d-1)})\bigr),
\]
provided all entries exist.  The polynomial \(h\) is
\emph{right-Noetherian} if all these entries exist
and
\[
  r(h)\ge r(h')\ge\cdots\ge r(h^{(d-1)}).
\]

The polynomial \(h\) is
\emph{strictly right-Noetherian} if all these entries exist
and
\[
  r(h)> r(h')\ge\cdots\ge r(h^{(d-1)}).
\]

The polynomial \(h\) is
\emph{strongly strictly right-Noetherian} if all these entries exist
and
\[
  r(h)>r(h')>\cdots>r(h^{(d-1)}).
\]
\end{definition}

\begin{lemma}[Section 9.2 of \cite{FM26a}]
\label{FangMaLinCorrespondence}
A monic univariate polynomial $p(x)\in \Ga_1$ if and only if it is right-Noetherian.
\end{lemma}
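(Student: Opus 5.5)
The plan is to unwind Definition~\ref{FMGdefinition} in one variable, where everything reduces to statements about half-lines. Let $p$ be monic of degree $d\ge1$ and write $p_k:=p^{(k)}$ for $0\le k\le d$. For $k\le d$, $p_k=\tfrac{d!}{(d-k)!}\,q_k$ with $q_k$ monic of degree $d-k$, so $p_k$ has positive leading coefficient. For $k>d$ we have $p_k\equiv0$, and these are excluded from the definition.

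First I will show that condition (i) of Definition~\ref{FMGdefinition} holds automatically for every such $p$.
\begin{enumerate}
\item For $k=d$, $p_k$ is a positive constant and $\mathcal C_{p_k}=\R$ by convention.
\item For $k<d$, $\{p_k>0\}$ is a finite union of open intervals, and it contains a neighbourhood of $+\infty$ because the leading coefficient is positive.
\item A set $I\subseteq\R$ satisfies PRT exactly when $I+[0,\infty)\subseteq I$. For an interval component this forces $I$ to be unbounded to the right.
\item Only one component of $\{p_k>0\}$ can be unbounded to the right. If $p_k$ has a real root it is $(r(p_k),\infty)$, and otherwise it is all of $\R$.
\end{enumerate}
So $\mathcal C_{p_k}$ always exists, is unique, and equals $(r(p_k),\infty)$ or $\R$ according as $p_k$ has a real root or not.

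Condition (ii) then becomes the chain of inclusions $\mathcal C_{p_k}\subseteq\mathcal C_{p_{k+1}}$ for $0\le k\le d-1$; in one variable there is only one partial derivative. For $k=d-1$ the right-hand side is $\R$, so this inclusion is trivial. For $k\le d-2$, suppose both $p_k$ and $p_{k+1}$ have real roots. Then the inclusion $(r(p_k),\infty)\subseteq(r(p_{k+1}),\infty)$ holds if and only if $r(p_k)\ge r(p_{k+1})$.

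The only point needing care is the existence of the roots required in Definition~\ref{right-Noetherian}. I expect this bookkeeping to be the main (mild) obstacle, and I will handle it by a backward induction. The polynomial $p_{d-1}$ is linear, so it has a real root and $\mathcal C_{p_{d-1}}$ is a proper half-line. Suppose $p_{k+1}$ has a root, so that $\mathcal C_{p_{k+1}}\neq\R$. If $p_k$ had no real root, then $\mathcal C_{p_k}=\R\not\subseteq\mathcal C_{p_{k+1}}$, contradicting (ii). Hence if $p\in\Ga_1$, every $p_k$ with $0\le k\le d-1$ has a real root, and (ii) gives $r(p)\ge r(p')\ge\cdots\ge r(p^{(d-1)})$. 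That is, $p$ is right-Noetherian.

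For the converse, let $p$ be right-Noetherian. All roots $r(p_k)$, $k\le d-1$, exist, so $\mathcal C_{p_k}=(r(p_k),\infty)$. The ordering of the root sequence yields every inclusion in (ii), and together with (i) this shows $p\in\Ga_1$. Finally, multiplying by the positive constants $\tfrac{d!}{(d-k)!}$ changes neither the roots nor the positivity sets, which justifies passing between $p^{(k)}$ and the monic $q_k$ throughout.
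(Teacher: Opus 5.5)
Your proof is correct. In one variable, PRT forces $\mathcal{C}_{p^{(k)}}$ to be the right-unbounded component of $\{p^{(k)}>0\}$, which is $(r(p^{(k)}),\infty)$ or $\R$. Condition (ii) of Definition~\ref{FMGdefinition} then reduces to $r(p^{(k)})\ge r(p^{(k+1)})$. Your backward induction from the linear $p^{(d-1)}$ correctly supplies the existence of every root $r(p^{(k)})$, $0\le k\le d-1$, which Definition~\ref{right-Noetherian} requires. The only inclusion you leave implicit is the top one, $\mathcal{C}_{p^{(d)}}=\R\subseteq\mathcal{C}_{p^{(d+1)}}=\mathcal{C}_0=\R$, and it holds trivially by the stated convention.

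The paper gives no argument of its own here; it only quotes Section~9.2 of \cite{FM26a}. Your route is a self-contained verification from the definitions as stated in the paper. It makes clear that only the half-line structure of positivity sets of univariate polynomials is involved. The citation buys brevity and places the statement inside Fang--Ma's general theory, but leaves the reader nothing to check locally.
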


\begin{definition}[Root parameters]
Put
\[
 \K=\{\s=(s_0,\ldots,s_{d-1})\in\R^d:
                  s_0\ge s_1\ge\cdots\ge s_{d-1}\}.
\]
For $e\ge1$, define
\begin{equation}\label{eq:iterated}
 \frp_e(y;a_0,\ldots,a_{e-1})
 =\int_{a_0}^{y}\int_{a_1}^{t_1}\cdots
     \int_{a_{e-1}}^{t_{e-1}}dt_e\cdots dt_1,
 \qquad \frp_0=1.
\end{equation}
The monic root-parameter family is
\begin{equation}\label{eq:family}
 p_{\s}(x)=d!\frp_d(x;s_0,\ldots,s_{d-1}),
 \qquad P_{\s}=\Pol_n(p_{\s}).
\end{equation}
\end{definition}

For $0\le j<d$,
\begin{equation}\label{eq:rootderivatives}
 p_{\s}^{(j)}(x)=d!\frp_{d-j}(x;s_j,\ldots,s_{d-1}),
 \qquad p_{\s}^{(d)}=d!.
\end{equation}
The largest real root of $p_{\s}^{(j)}$ is $s_j$: the expression vanishes
at $s_j$ and is positive for $x>s_j$. Thus $p_{\s}$ is right-Noetherian,
meaning that its largest derivative roots are ordered as above.
Conversely, successive integration reconstructs every monic
right-Noetherian polynomial from this root sequence. If $s_0>s_1$, then $p_{\s}$ is \emph{strictly} right-Noetherian. If $s_0>s_1>s_2>\ldots > s_{d-1}$, then $p_{\s}$ is \emph{strongly strictly} right-Noetherian.

\begin{lemma}\label{lem neg ps}
 Suppose that $\s \in \K$. Then, for any $j$, $p_{\s}^{(j)}< 0$ on $[s_{j+1},s_j).$ 
\end{lemma}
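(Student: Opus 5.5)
We argue by downward induction on $j$, using the iterated-integral representation \eqref{eq:rootderivatives}: $p_{\s}^{(j)}(x)=d!\,\frp_{d-j}(x;s_j,\ldots,s_{d-1})$. We first record the sign of each derivative to the right of its largest root. Since $s_j$ is the largest real root of $p_{\s}^{(j)}$ and the leading coefficient is positive, $p_{\s}^{(j)}>0$ on $(s_j,\infty)$. Directly from the integral, $\frp_{e}(y;s_j,\ldots)\ge 0$ for $y\ge s_j$ by induction, because the integrand is nonnegative on $[s_{j+1},\infty)\supseteq[s_j,\infty)$. Note the statement is only meaningful for $j\le d-2$, since $s_{j+1}$ must exist. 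It is vacuous when $s_{j+1}=s_j$, because then the interval $[s_{j+1},s_j)$ is empty.

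The main step is the identity
\[
p_{\s}^{(j)}(x)=-\int_x^{s_j}p_{\s}^{(j+1)}(t)\,dt ,
\]
valid because $p_{\s}^{(j)}(s_j)=0$ by the definition of $\frp$ with lower limit $s_j$. Now fix $x\in[s_{j+1},s_j)$. For $t\in(x,s_j)\subseteq(s_{j+1},s_j)$, the previous paragraph applied to index $j+1$ gives $p_{\s}^{(j+1)}(t)>0$, since $t>s_{j+1}$. The interval $(x,s_j)$ has positive length, so the integral is strictly positive. Hence $p_{\s}^{(j)}(x)<0$.

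The only subtle point is the strict positivity of $p_{\s}^{(j+1)}$ on $(s_{j+1},\infty)$. We do not rely on the root sequence for this. Instead we show directly that $\frp_e(y;a_0,\ldots,a_{e-1})>0$ for $y>a_0$ whenever $a_0\ge a_1\ge\cdots$. The proof is by induction on $e$. For $e=1$ we have $\frp_1=y-a_0>0$. For the inductive step, $\frp_e(y)=\int_{a_0}^y\frp_{e-1}(t;a_1,\ldots)\,dt$, and on $(a_0,y)$ every $t$ satisfies $t>a_0\ge a_1$, so the integrand is positive by the inductive hypothesis. This self-contained fact closes the argument, so I expect no real obstacle.
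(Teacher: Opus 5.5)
Your proof is correct and is essentially the paper's argument: both write $p_{\s}^{(j)}(x)=-\int_x^{s_j}p_{\s}^{(j+1)}(t)\,dt$ and use $p_{\s}^{(j+1)}>0$ on $(s_{j+1},s_j)$. The only addition is your short induction showing $\frp_e(y;a_0,\ldots,a_{e-1})>0$ for $y>a_0$; the paper takes this from its earlier remark that $s_j$ is the largest root of $p_{\s}^{(j)}$, with positivity to its right. Your ``downward induction'' label is unnecessary, since the argument is direct.
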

\begin{proof}
Denote $\s^{(j)}=(s_{j},\ldots, s_{d-1})$. Then, we have that:
    \begin{equation}\label{e expre psj}
        p_{\s}^{(j)}(y)= d! \int_{s_j}^{y} \int_{s_{j+1}}^{t_{j+1}}\ldots \int_{s_{d-1}}^{t_{d-1}} dt_d \ldots dt_{j+1}=  \int_{s_j}^y p_{\s^{(j+1)}}^{(j+1)}(t_{j+1}) dt_{j+1}.
    \end{equation}
    Since $s_{j+1}$ is the biggest real root of $p_{\s^{(j+1)}}^{(j+1)}$, we have that $p_{\s^{(j+1)}}^{(j+1)}>0$ on $(s_{j+1},s_j)$. Then, (\ref{e expre psj}) implies that $p_{\s}^{(j)}< 0$ on $[s_{j+1},s_j).$ 
\end{proof}

\begin{definition}[{Definition 10.8} of \cite{FM26a}]\label{def dominate}
Let $g$ be a G\aa rding polynomial. If $f$ is a polynomial satisfies
\begin{equation*}
    \CC_{\partial^{\alpha}g} \subset \{\partial^{\alpha}f>0\} \text{ for every nontrivial partial derivative } \partial^{\alpha} f,
\end{equation*}
then we say that $g$ dominates $f$, and write \(g \triangleright f\).
\end{definition}
The case that $g$ and $f$ are right-Noetherian monic polynomials of degree $n$ is given by [{Definition 1.3} of \cite{Lin23}].

\subsection{Polarization}
Fix integers $1\le d\le n$. For $0\le k\le n$, write
\[
 S_k(\lambda)=\frac{\sigma_k(\lambda)}{\binom nk},
 \qquad S_0=1,\qquad \lambda\in\R^n.
\]
For a polynomial $p(x)=\sum_{k=0}^d c_kx^k$, its normalized polarization is
\begin{equation}\label{eq:polarization}
 \Pol_n(p)(\lambda)=\sum_{k=0}^d c_kS_k(\lambda).
\end{equation}
It is the symmetric multi-affine polynomial with
$\Pol_n(p)(x\one)=p(x)$. Here $\one=(1,\ldots,1)\in\R^n$ and
$\D=\sum_{i=1}^n\partial_{\lambda_i}$. In particular,
\begin{equation}\label{eq:commutation}
 \D^j\Pol_n(p)=\Pol_n(p^{(j)}).
\end{equation}

\begin{theorem}[Theorem 10.10 of \cite{FM26a}]
\label{UpsilonDominance}
Suppose that $g$ is a monic G \aa rding polynomial and $f$ is a monic polynomial, and $g \triangleright f$. Let \(F\) and \(G\) be their polarizations on $\mathbb{R}^n$. Then
\[
       g\triangleright f
       \quad\Longleftrightarrow\quad
       \mathcal{C}_G\subseteq \mathcal{C}_F.
\]
\end{theorem}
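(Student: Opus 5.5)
The plan is to argue by induction on the common degree $d$, reducing everything to the diagonal operator $\D$. By \eqref{eq:commutation}, $\D^jF=\Pol_n(f^{(j)})$ and $\D^jG=\Pol_n(g^{(j)})$. Since $g$ is G\aa rding, every $g^{(j)}$ is again G\aa rding. Moreover, the domination $g\triangleright f$ passes to $g^{(j)}\triangleright f^{(j)}$ directly from Definition~\ref{def dominate}. The base case $d=1$ is affine, so I will check it by hand: $F$ and $G$ are then multiples of $S_1$ shifted by constants, and $\CC_G,\CC_F$ are half-spaces.

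For the implication ``$\Rightarrow$'', I want to show $F>0$ on $\CC_G$. Granting this, $\CC_G$ is a connected subset of $\{F>0\}$ that contains all large diagonal points, and so does $\CC_F$, so $\CC_G\subseteq\CC_F$. The proof of $F>0$ on $\CC_G$ is a continuity argument. Fix $\lambda\in\CC_G$ and a path in $\CC_G$ from $\lambda$ to a far diagonal point $x\one$, where $F>0$. Suppose $F$ vanished somewhere on the path, and let $\mu$ be the zero closest to $x\one$. Then $\mu\in\CC_G$. The G\aa rding property gives $\CC_G\subseteq\CC_{\partial_iG}$ for all $i$, and hence $\CC_G\subseteq\CC_{\D G}$, using convexity of these components to pass from each $\partial_iG$ to the sum $\D G$. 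The induction hypothesis applied to $g'\triangleright f'$ then gives $\CC_{\D G}\subseteq\CC_{\D F}$, and iterating yields $\mu\in\bigcap_{j\ge1}\CC_{\D^jF}$. At the same time $\mu\in\overline{\CC_F}$ with $F(\mu)=0$.

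The key claim, which I expect to be the main obstacle, is a characterization of the form
\[
\CC_F=\{F>0\}\cap\textstyle\bigcap_{j\ge1}\CC_{\D^jF}\ \text{ near its boundary},
\]
or more usefully: if $\mu\in\bigcap_{j\ge1}\CC_{\D^jF}$, then the univariate polynomial $t\mapsto F(\mu+t\one)$ has all of its derivatives positive for $t\ge0$. If so, the restricted root sequence is ordered, and $F(\mu)=0$ forces $\mu$ to lie exactly on the outermost level, where $g\triangleright f$ is violated. To make this concrete, I would restrict to the diagonal line through $\mu$. The restrictions $t\mapsto G(\mu+t\one)$ and $t\mapsto F(\mu+t\one)$ have $j$-th derivatives $\D^jG$ and $\D^jF$ along the line, so domination along this line reduces to the univariate right-Noetherian comparison of Lemma~\ref{FangMaLinCorrespondence} and Lemma~\ref{lem neg ps}. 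The desired contradiction is that $\mu\in\CC_G$ gives $G(\mu+t\one)>0$ for all $t\ge0$, and this should force $F(\mu)>0$. If the restriction does not cooperate, I would fall back on quoting the G\aa rding structure of $\CC_F$ from \cite{FM26a}.

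For ``$\Leftarrow$'', I restrict $\CC_G\subseteq\CC_F$ to the diagonal, where $\CC_G\cap\R\one=\{x\one:x>r(g)\}$ and $\CC_F\cap\R\one\subseteq\{f>0\}$. This gives $\CC_g\subseteq\{f>0\}$. For higher derivatives I apply the same restriction to $\D^jG$ and $\D^jF$. By \eqref{eq:commutation} these are the polarizations of $g^{(j)}$ and $f^{(j)}$, so it suffices to show $\CC_{\D^jG}\subseteq\CC_{\D^jF}$. I expect to obtain this from the induction hypothesis combined with the standing hypothesis $g\triangleright f$ in the statement, which makes this direction essentially formal.
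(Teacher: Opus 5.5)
The paper gives no proof of this statement (it is imported from \cite{FM26a}), so I judge your argument on its own. The reduction of ``$\Rightarrow$'' to showing $F>0$ on $\CC_G$ is sound. So is the induction placing $\mu+t\one$ in $\CC_{\D^jG}\subseteq\CC_{\D^jF}$ for $t\ge0$, $j\ge1$.

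The gap is the step that should turn $F(\mu)=0$ into a contradiction. At that point you know only two things. First, $\phi(t)=F(\mu+t\one)$ has $\phi(0)=0$ and $\phi^{(j)}>0$ on $[0,\infty)$ for $j\ge1$. Second, $\psi(t)=G(\mu+t\one)$ is positive there together with all its derivatives. These facts are compatible: $\psi=(t+1)^2$ and $\phi=t^2+2t$ have the same derivatives, yet $\phi(0)=0<\psi(0)$.

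The hypothesis $g\triangleright f$ constrains only the restrictions of $G,F$ to the diagonal $\R\one$, where they are $g,f$. On the translate $\mu+\R\one$ they restrict to other univariate polynomials, whose derivative roots are not controlled by those of $g,f$. Lemmas~\ref{FangMaLinCorrespondence} and~\ref{lem neg ps} describe the roots of a single polynomial; they do not compare two. The implication $\psi(0)>0\Rightarrow\phi(0)>0$ on every such line is exactly the theorem, so your key claim is circular. The fallback of quoting the structure of $\CC_F$ from \cite{FM26a} is not available, since $f$ is not assumed G\aa rding, and it would amount to quoting the result.

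What is missing is a mechanism that carries the diagonal comparison off the diagonal through multi-affinity. One workable route is the variable separation \eqref{eq:partial}. Let $G_j,F_j$ be the partial polarizations of $g,f$, and set $m=n-j+1$. Then $F_j=\frac{\partial_yF_{j-1}}{m}\bigl(z_j-y+m\,F_{j-1}/\partial_yF_{j-1}\bigr)$. By \eqref{eq:domainS} with $i=0$, $\CC_{G_j}$ is cut out by $(\xi,y)\in\CC_{\partial_yG_{j-1}}$ and $z_j-y+m\,G_{j-1}/\partial_yG_{j-1}>0$. Thus $\CC_{G_j}\subseteq\{F_j>0\}$ follows from two ingredients:
\begin{enumerate}
\item $\CC_{\partial_yG_{j-1}}\subseteq\{\partial_yF_{j-1}>0\}$;
\item the fiberwise inequality $F_{j-1}/\partial_yF_{j-1}\ge G_{j-1}/\partial_yG_{j-1}$ on $\CC_{\partial_yG_{j-1}}$.
\end{enumerate}
The second is a one-variable statement about a dominating pair; for right-Noetherian fibers it is the root monotonicity of $\Phi$ in Lemma~\ref{lem:univariate}. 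Carrying fiberwise domination at all $y$-derivative orders through $j=1,\ldots,n$ closes the induction.

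Two smaller points. First, the paper applies the theorem to pairs of different degrees ($p_{\s}\triangleright p_i$ in Lemma~\ref{lem gradient P for}), so you should induct on $\deg g$ rather than on a common degree. Second, your ``$\Leftarrow$'' is formal only because the quoted statement redundantly assumes $g\triangleright f$. Restricting to the diagonal gives just the zeroth-order condition $\CC_g\subseteq\{f>0\}$. The conditions on $f^{(j)}$ for $j\ge1$ need a separate argument, for instance multi-affinity along coordinate rays as in \eqref{eq:coordinate-rays}.
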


The $d=n$ case of Theorem \ref{UpsilonDominance} was proved as Theorem 1.3 of \cite{Lin23}.

\begin{theorem}(Theorem 1.4 of \cite{FM26b})\label{thm ideal pola}
Let $f$ be a G{\aa}rding polynomial. Then $f$ is ideal G{\aa}rding if $f$ is a polarization of a univariate G{\aa}rding polynomial.
\end{theorem}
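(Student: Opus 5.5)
We plan to argue by induction on the degree $d$ of the univariate polynomial. After rescaling by a positive constant we may take $f=\Pol_n(p)$ with $p$ monic, right-Noetherian of degree $d$ (Lemma~\ref{FangMaLinCorrespondence}). For $d\le1$ the claim is immediate from the definitions of $\mathrm I_n^0$ and $\mathrm I_n^1$. For the inductive step we check the three defining conditions of $\mathrm I_n^d$ in turn.

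\emph{First condition: the derivatives.} We use the identity
\[
\partial_{\lambda_i}S_k^{(n)}(\lambda)=\tfrac kn\,S_{k-1}^{(n-1)}(\lambda_{\hat i}),
\]
which follows from $\binom{n-1}{k-1}/\binom nk=k/n$. It gives
\[
\partial_i f=\tfrac1n\,\Pol_{n-1}(p')(\lambda_{\hat i}).
\]
Since $R(p')$ is the tail of $R(p)$, the polynomial $p'$ is again monic right-Noetherian, hence G\aa rding. By the induction hypothesis (applied in $n-1$ variables), $\Pol_{n-1}(p')$ is ideal. We then need a small lemma: adjoining a dummy variable preserves ideal G\aa rding-ness. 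Its distinguished component is just $\mathcal C\times\R$, which is still convex, and derivatives in the dummy variable vanish. With this lemma, $\partial_i f\in\mathrm I_n^{d-1}$.

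\emph{Second condition: nonemptiness and containment.} Nonemptiness of $\mathcal C_f$ is clear, since $f(x\one)=p(x)>0$ for $x>r(p)$. The containment $\mathcal C_f\subset\bigcap_i\mathcal C_{\partial_if}$ is exactly property (ii) in the definition of a Fang--Ma--G\aa rding polynomial, which we may assume for $f$.

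\emph{Third condition: convexity of $\mathcal C_f$ (main obstacle).} This is the step I expect to be hardest. The plan is to describe $\mathcal C_f$ through a translation function. For $\lambda\in\R^n$, let $h(\lambda)$ be the largest real root $t$ of
\[
t\mapsto f(\lambda-t\one).
\]
By~\eqref{eq:commutation}, this function has $t$-derivatives $(-1)^j\Pol_n(p^{(j)})(\lambda-t\one)$. Using PRT and the containment in the derivative cones, we would show $\mathcal C_f=\{h>0\}$ and that $h(\lambda+t\one)=h(\lambda)+t$. Convexity of $\mathcal C_f$ then follows from concavity of $h$.

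To get concavity, we would work at a point of $\partial\mathcal C_f$. There $\nabla f\in\overline{\Gamma_n^+}$ and $\D f>0$, because $\lambda\in\overline{\mathcal C_f}\subset\overline{\mathcal C_{\partial_if}}$ and $\D f=\Pol_n(p')$ is positive there by strictness. So $\partial\mathcal C_f$ is a smooth graph over directions transverse to $\one$. We would then verify that its second fundamental form is nonnegative, i.e.\ $D^2f(\xi,\xi)\le0$ for $\xi\perp\nabla f$.

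For this Hessian estimate we would use multi-affinity: $\partial_i^2f=0$, and
\[
\partial_i\partial_jf=\tfrac{(n-2)!\,\cdot}{n!}\Pol_{n-2}(p'')(\lambda_{\widehat{ij}}).
\]
The inductive concavity for $\partial_if$ on $\mathcal C_{\partial_if}$ should control these terms, following the Lin/Fang--Ma argument for strictly $\Upsilon$-stable level hypersurfaces. If the direct computation becomes unwieldy, the fallback is to reduce to the root-parameter family $p_{\s}$ and use domination (Theorem~\ref{UpsilonDominance}). The idea would be to write $\mathcal C_f$ as an intersection of cones of polynomials that are already known to have convex components, such as products of shifted linear and hyperbolic factors. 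Convexity of $\mathcal C_f$ would then follow.
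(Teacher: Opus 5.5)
Your handling of the first two defining conditions of $\mathrm I^d_n$ is fine and essentially formal. This covers the identity $\partial_if=\tfrac1n\Pol_{n-1}(p')(\lambda_{\hat i})$, the dummy-variable remark, nonemptiness, and the containment taken from the G\aa rding hypothesis; that hypothesis also makes $\Pol_{n-1}(p')$ G\aa rding, so your induction hypothesis applies. But the third condition, convexity of $\mathcal{C}_f$, is the entire content of the theorem, and you do not prove it.

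\textbf{The convexity step is not supplied.} Your induction hypothesis contains no concavity statement: $\partial_if\in\mathrm I^{d-1}_n$ only asserts that the \emph{sets} $\mathcal{C}_{\partial^\alpha f}$, $|\alpha|\ge1$, are convex. These sets are identical for every $f-b$ with $b\ge p(r_1)$, since $\partial_i(f-b)=\partial_if$ and $p-b$ stays right-Noetherian. Convexity of all the $\mathcal{C}_{f-b}$ is therefore a quasi-concavity property of the function $f$ itself. Whatever forces $D^2f(\xi,\xi)\le0$ on the tangent spaces of the level sets $\{f=b\}$ must come from an inequality for $f$, not from set-convexity of the derivative cones. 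Saying that the terms ``should'' be controlled by the Lin/Fang--Ma argument defers exactly the statement to be proved.

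Several secondary points also need repair:
\begin{itemize}
\item The passage to $h$ is only an equivalence: given $h(\lambda+t\one)=h(\lambda)+t$, concavity of $h$ is the same as convexity of $\mathcal{C}_f$.
\item $h$ must be the \emph{smallest} root of $t\mapsto f(\lambda-t\one)$. For $f=\lambda_1\lambda_2-1$ the largest root is convex and equals $1>0$ at $0\notin\mathcal{C}_f$.
\item Smoothness of $\partial\mathcal{C}_f$ uses $r_0>r_1$, which is not assumed. For $p=x^d$ one has $\mathcal{C}_f=\Gamma_d$, with a vertex at $0$.
\item $\lambda\in\overline{\mathcal{C}_{\partial_if}}$ only gives $\D f\ge0$.
\end{itemize}
So even a successful curvature computation would still need an approximation argument and a local-to-global convexity theorem.

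\textbf{The fallback cannot work either.} Writing an open set as an intersection of convex sets is equivalent to its convexity. Theorem~\ref{UpsilonDominance} only produces inclusions $\mathcal{C}_G\subset\mathcal{C}_F$, never a set that separates a given outside point. Concretely, take $n=d=2$ and $p=x^2-1$, so $\mathcal{C}_f=\{\lambda_1\lambda_2>1,\ \lambda_1>0\}$. Every monic right-Noetherian quadratic has the form $q=(x-c)^2-e^2$, with $\Pol_2(q)=(\lambda_1-c)(\lambda_2-c)-e^2$. If its component contains $\mathcal{C}_f$, then $c\le0$ and $e\le1-c$, hence for $t>0$,
\[
(t-c)(t^{-1}-c)=1-c(t+t^{-1})+c^2\ge(1-c)^2\ge e^2,
\]
with equality for $t\ne1$ only when $q=p$. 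So every such $q\neq p$, and likewise every dominated linear polarization, contains the boundary points $(t,t^{-1})$. The only way to cut out $\mathcal{C}_f$ is to use $f$ itself.

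\textbf{What is missing.} The paper does not prove this statement; it quotes it from \cite{FM26b}, and Proposition~\ref{prop concave pol} even takes the fixed-parameter ideal property as input. Section~\ref{sec:joint-concavity} nonetheless shows the shape of an argument that works:
\begin{itemize}
\item Separate variables one at a time as in \eqref{eq:partial}, and write $R_j=\frac{\partial_yR_{j-1}}{m}L$ with $L=z_j-y+m\,R_{j-1}/\partial_yR_{j-1}$. This is the case $i=0$ of \eqref{eq:factorS}.
\item Then $\mathcal{C}_{R_j}$ is the part of $\mathcal{C}_{\partial_yR_{j-1}}\times\R$ where $L>0$, as in \eqref{eq:domainS}.
\item Convexity therefore reduces to concavity of the quotient $R_{j-1}/\partial_yR_{j-1}$ on $\mathcal{C}_{\partial_yR_{j-1}}$ (Lemma~\ref{lem:quotient}).
\item The real input behind that concavity is the joint concavity and root-monotonicity of the univariate quotient $\Phi_e=\frp_e/\partial_y\frp_e$ (Lemma~\ref{lem:univariate}).
\end{itemize}
An analytic inequality of this kind, on such a quotient or on $f^{1/d}$ rather than on the derivative cones, is what your plan lacks.
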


\subsection{Lift to matrix}

\begin{definition}[Majorization]
Let $x,y\in\mathbb{R}^n$.  We say that $x$ is \emph{majorized} by $y$,
and write $x\prec y$, if
\[
  \sum_{i=1}^k x_i^\downarrow
  \leq
  \sum_{i=1}^k y_i^\downarrow
  \qquad (k=1,\ldots,n-1),
\]
and
\[
  \sum_{i=1}^n x_i
  =
  \sum_{i=1}^n y_i.
\]
\end{definition}

Now we state the Ky Fan--Lidskii inequality:

\begin{theorem}[Theorem~III.4.1, p.~69 of \cite{Bha97}]
\label{Fan-Lidskii}
Let $A,B$ be Hermitian $n\times n$ matrices. Then, for $k=1,\ldots,n$,
\[
  \sum_{i=1}^k \lambda_i^\downarrow(A+B)
  \leq
  \sum_{i=1}^k \lambda_i^\downarrow(A)
  +
  \sum_{i=1}^k \lambda_i^\downarrow(B).
\]
Equality holds when $k=n$.  Equivalently,
\[
  \lambda^\downarrow(A+B)
  \prec
  \lambda^\downarrow(A)+\lambda^\downarrow(B).
\]
Consequently, for every $t\in[0,1]$,
\[
  \lambda^\downarrow\bigl(tA+(1-t)B\bigr)
  \prec
  t\lambda^\downarrow(A)+(1-t)\lambda^\downarrow(B).
\]
\end{theorem}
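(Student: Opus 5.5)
The plan is to derive the inequality from the Ky Fan maximum principle, which gives a variational formula for partial sums of eigenvalues. For a Hermitian $n\times n$ matrix $H$ and $1\le k\le n$, the principle states
\[
\sum_{i=1}^k\lambda_i^\downarrow(H)=\max\Bigl\{\sum_{i=1}^k\langle Hu_i,u_i\rangle:\ u_1,\ldots,u_k\ \text{orthonormal in }\C^n\Bigr\}.
\]
I would prove this first, in two directions.

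For the lower bound ($\ge$ the value at a specific frame), take $u_i$ to be orthonormal eigenvectors for the $k$ largest eigenvalues; then $\sum_i\langle Hu_i,u_i\rangle$ equals the partial sum exactly. For the upper bound, diagonalize $H=\sum_j\lambda_j^\downarrow e_je_j^*$ and write
\[
\sum_{i=1}^k\langle Hu_i,u_i\rangle=\sum_{j=1}^n\lambda_j^\downarrow w_j,\qquad w_j=\sum_{i=1}^k|\langle u_i,e_j\rangle|^2 .
\]
Bessel's inequality gives $0\le w_j\le1$, and $\sum_j w_j=k$. A linear functional with decreasing coefficients $\lambda_j^\downarrow$, maximized over this set of weights, is largest when $w_1=\cdots=w_k=1$ and the remaining weights vanish. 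This is an elementary rearrangement (exchange) argument, and it is the only step with real content.

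Next I would obtain subadditivity directly. Choose an orthonormal frame $u_1,\ldots,u_k$ attaining the maximum for $A+B$. Then
\[
\sum_{i\le k}\lambda_i^\downarrow(A+B)=\sum_i\langle Au_i,u_i\rangle+\sum_i\langle Bu_i,u_i\rangle .
\]
Each sum on the right is bounded by the corresponding maximum, which gives $\sum_{i\le k}\lambda_i^\downarrow(A)+\sum_{i\le k}\lambda_i^\downarrow(B)$. For $k=n$, both sides equal $\tr(A+B)=\tr A+\tr B$, so equality holds.

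To pass to the majorization statement, note that $\lambda^\downarrow(A)+\lambda^\downarrow(B)$ is already in decreasing order. Its partial sums are therefore exactly the right-hand sides above, and the inequalities together with the trace equality are precisely $\lambda^\downarrow(A+B)\prec\lambda^\downarrow(A)+\lambda^\downarrow(B)$. For the convex-combination version, apply this to the Hermitian matrices $tA$ and $(1-t)B$. For $t\in[0,1]$, multiplying by a nonnegative scalar preserves the decreasing order, so $\lambda^\downarrow(tA)=t\lambda^\downarrow(A)$ and $\lambda^\downarrow((1-t)B)=(1-t)\lambda^\downarrow(B)$.

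The main, though mild, obstacle is the weight-rearrangement step in the maximum principle. I would handle it by Abel summation. Writing $W_m=\sum_{j\le m}w_j\le\min(m,k)$, we have
\[
\sum_j\lambda_j^\downarrow w_j=\sum_{m=1}^{n-1}\bigl(\lambda_m^\downarrow-\lambda_{m+1}^\downarrow\bigr)W_m+\lambda_n^\downarrow W_n .
\]
Each coefficient $\lambda_m^\downarrow-\lambda_{m+1}^\downarrow$ is nonnegative and $W_n=k$. Hence the sum is at most the same expression with $W_m$ replaced by $\min(m,k)$, which evaluates to $\sum_{j\le k}\lambda_j^\downarrow$.
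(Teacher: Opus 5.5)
Your proof is correct and complete. The Ky Fan maximum principle proves the top-$k$ subadditivity, and the Abel-summation bound $W_m\le\min(m,k)$ settles the rearrangement step; that bound uses $0\le w_j\le 1$ from Bessel and $\sum_j w_j=k$. The trace handles $k=n$. Since $\lambda^\downarrow(A+B)$ and $\lambda^\downarrow(A)+\lambda^\downarrow(B)$ are both already decreasing, the partial-sum inequalities are literally the majorization in the paper's definition. The convex-combination form then follows from $\lambda^\downarrow(sH)=s\lambda^\downarrow(H)$ for $s\ge0$.

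The paper itself gives no argument; it only cites Bhatia, where the statement appears in the section on Lidskii's theorems. There the variational (minimax) principles give the stronger inequality $\sum_{j\le k}\lambda^\downarrow_{i_j}(A+B)\le\sum_{j\le k}\lambda^\downarrow_{i_j}(A)+\sum_{j\le k}\lambda^\downarrow_j(B)$ for arbitrary indices $i_1<\cdots<i_k$, and the quoted result is the consecutive case $i_j=j$.

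Your route is more elementary and self-contained, and it loses nothing for the paper's purposes. The paper uses this theorem only for the convex-combination majorization, which is combined with Theorem~\ref{Rado} in Proposition~\ref{prop:matrix}. For that, the Ky Fan (top-$k$) level suffices. The extra strength of Lidskii's form, such as $\lambda^\downarrow(A+B)-\lambda^\downarrow(A)\prec\lambda^\downarrow(B)$, is never needed.
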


Now we state Rado's corollary of the Hardy--Littlewood--P\'{o}lya theorem.

\begin{theorem}[Corollary~2.B.3 of \cite{MOA11}]
\label{Rado}
For $x,y\in\mathbb{R}^n$,  $x\prec y$ if and only if $x$ belongs to the convex hull of the coordinate permutations of $y$,
        that is,
        \[
          x\in\conv\{Py:P\text{ is a permutation matrix}\}.
        \]
\end{theorem}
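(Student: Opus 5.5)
The plan is to prove both implications directly, using convexity and a separating hyperplane. This avoids the detour through doubly stochastic matrices and Birkhoff's theorem. Write $X_k=\sum_{i=1}^k x_i^\downarrow$ and $Y_k=\sum_{i=1}^k y_i^\downarrow$, and let $K=\conv\{Py\}$, where $P$ ranges over permutation matrices. $K$ is a compact convex polytope.

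\emph{($\Leftarrow$) Points of $K$ are majorized by $y$.} For each $k$, the function
\[
\phi_k(v)=\sum_{i=1}^k v_i^\downarrow=\max_{|I|=k}\sum_{i\in I}v_i
\]
is a maximum of linear functions, hence convex. It is also permutation invariant. Write $x=\sum_P t_P\,Py$ with $t_P\ge0$ and $\sum_P t_P=1$. Then
\[
\phi_k(x)\le\sum_P t_P\,\phi_k(Py)=\phi_k(y)=Y_k .
\]
For $k=n$, the function $\phi_n(v)=\sum_i v_i$ is linear and permutation invariant, so equality holds. Hence $x\prec y$.

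\emph{($\Rightarrow$) Majorized points lie in $K$.} Suppose $x\prec y$ but $x\notin K$. By the separating hyperplane theorem there is $c\in\R^n$ with
\[
c\cdot x>\max_P c\cdot Py .
\]
By the rearrangement inequality, the right side equals $\sum_i c_i^\downarrow y_i^\downarrow$, and $c\cdot x\le\sum_i c_i^\downarrow x_i^\downarrow$. So it suffices to show
\[
\sum_i c_i^\downarrow\bigl(x_i^\downarrow-y_i^\downarrow\bigr)\le 0,
\]
which gives the contradiction. Abel summation rewrites the left side as
\[
\sum_{k=1}^{n-1}\bigl(c_k^\downarrow-c_{k+1}^\downarrow\bigr)(X_k-Y_k)\;+\;c_n^\downarrow\,(X_n-Y_n).
\]
Each coefficient $c_k^\downarrow-c_{k+1}^\downarrow$ is nonnegative and each $X_k-Y_k$ is nonpositive for $k<n$, by the majorization inequalities. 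The last term vanishes because $X_n=Y_n$. This proves $x\in K$.

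The main point needing care is the Abel summation step. It is exactly where the equality $X_n=Y_n$ is needed: the sign of $c_n^\downarrow$ is uncontrolled, so that term must vanish rather than merely have a sign. The rearrangement inequality, which is a standard exchange argument, is the only other ingredient. Alternatively, one can cite Hardy--Littlewood--P\'olya ($x=Dy$ with $D$ doubly stochastic) together with Birkhoff's theorem, but the separation argument above is self-contained.
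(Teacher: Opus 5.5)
Your proof is correct, and it takes a different route from the one the paper relies on. The paper does not prove the statement itself. It cites \cite[Corollary~2.B.3]{MOA11}, where Rado's theorem comes from combining two results: the Hardy--Littlewood--P\'olya characterization ($x\prec y$ if and only if $x=Dy$ for some doubly stochastic $D$, with $D$ built from finitely many T-transforms), and Birkhoff's theorem that every doubly stochastic matrix is a convex combination of permutation matrices. You avoid matrices entirely. The rearrangement inequality identifies the support function of $K=\conv\{Py\}$ as $c\mapsto\sum_i c_i^\downarrow y_i^\downarrow$. Abel summation shows $c\cdot x\le\sum_i c_i^\downarrow y_i^\downarrow$ for every $c$, and strict separation from the compact convex set $K$ finishes the argument. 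Your converse, via convexity and permutation invariance of the top-$k$ partial sums $\phi_k$, is equally short. Your route is self-contained, needing only finite-dimensional separation and rearrangement. It also shows exactly where the trace equality $X_n=Y_n$ is indispensable: it kills the term whose coefficient $c_n^\downarrow$ has uncontrolled sign. The cited route gives more information: an explicit doubly stochastic $D$ with $x=Dy$, and hence an explicit convex combination of permutations of $y$, whereas your separation argument is non-constructive. Proposition~\ref{prop:matrix} only uses membership of $\lambda(A_t)$ in the permutation hull of $\eta_t$, so either proof serves the paper equally well.
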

In the next theorem, we refer to \cite{FM26b} for the definition of ideal G\aa rding polynoial.
\begin{theorem}[Theorem~11.1 of \cite{FM26b}]\label{thm pullback ide}
    Take $T: \mathbb{R}^n \rightarrow \mathbb{R}^m, T(x)=Ax+b$, where $A$ doesn't have negative entry or zero row and $b \ge 0$. Then for any Ideal G\aa rding polynomial $f$, we have that $T^* f$ is also an ideal G\aa rding polynomial.
\end{theorem}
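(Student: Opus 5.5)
The proof goes by induction on the degree $d$ of $f$, applied to a slightly stronger statement. Put $g:=T^*f=f\circ T$, and let $a^{(i)}\in\overline{\Gamma_m^+}$ denote the $i$-th column of $A$. For $d\le 1$ the claim is elementary. If $f\equiv c\ge 0$, then $g\equiv c$. If $f$ is a degree-one G\aa rding polynomial, then $g$ is affine with nonnegative linear part, and $g$ is nonconstant or a nonnegative constant. In both cases the component is a half-space or all of $\R^n$.

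For $d\ge2$ I first identify the component. Set $U:=T^{-1}(\mathcal C_f)$.
\begin{itemize}
\item $U$ is convex, as the affine preimage of the convex set $\mathcal C_f$, and it lies in $\{g>0\}$.
\item $U$ is nonempty and contains all large diagonal points. Since $A$ has no zero row and $b\ge0$, $T(t\one)\to+\infty\cdot\one$ coordinatewise as $t\to\infty$, and $\mathcal C_f$ contains all large positive diagonal points.
\item $U$ has PRT. Because $A\ge0$ we have $A\overline{\Gamma_n^+}\subset\overline{\Gamma_m^+}$, and $\mathcal C_f$ has PRT.
\item $U$ is a whole connected component of $\{g>0\}$. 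It is open. If $x\in\partial U$, then $T(x)\in\partial\mathcal C_f$, so $f(T(x))=0$ and $g(x)=0$. Hence $U$ is relatively closed in $\{g>0\}$.
\end{itemize}
Uniqueness of a PRT component of $\{g>0\}$ follows the same way: any PRT component meets $U$ along a ray $x+t\one$, so it coincides with $U$. Therefore $\mathcal C_g=U$, and $\mathcal C_g$ is convex.

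Next come the derivatives. By the chain rule,
\[
\partial_i g=\sum_{j}A_{ji}\,(\partial_j f)\circ T=T^*\bigl(D_{a^{(i)}}f\bigr),
\]
where $D_a=\sum_j a_j\partial_j$. If $a^{(i)}=0$, then $\partial_i g\equiv0$ and the convention $\mathcal C_0=\R^n$ applies. So the statement to carry through the induction is the following:
\begin{quote}
(*) for $f\in \mathrm I_m^d$ and $a\in\overline{\Gamma_m^+}\setminus\{0\}$, the polynomial $D_af$ lies in $\mathrm I_m^{d-1}$ and $\mathcal C_f\subset\mathcal C_{D_af}$.
\end{quote}
Given (*), the induction hypothesis for pullbacks in degree $d-1$ gives $\partial_i g\in\mathrm I_n^{d-1}$. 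Moreover, $\mathcal C_{\partial_ig}=T^{-1}(\mathcal C_{D_{a^{(i)}}f})\supset T^{-1}(\mathcal C_f)=\mathcal C_g$. Together with the convexity of $\mathcal C_g$, this yields $g\in\mathrm I_n^d$.

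Statement (*) is proved by the same induction on degree. Its derivatives are $\partial_k D_af=D_a\partial_kf$, and $\partial_kf\in\mathrm I^{d-1}_m$, so these are ideal by induction. Positivity is also immediate: on $\mathcal C_f\subset\bigcap_j\mathcal C_{\partial_jf}$ every $\partial_jf>0$, so $D_af>0$ there. The set $\mathcal C_f$ is connected and contains large diagonal points, so it lies in the PRT component $\mathcal C_{D_af}$ of $\{D_af>0\}$. The uniqueness of that PRT component follows from the ray argument above.

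The main obstacle is the \emph{convexity} of $\mathcal C_{D_af}$, since a positive combination of partial derivatives need not obviously have a convex positivity component. My first approach is to realize $D_af$ as a limit of rescaled differences $\epsilon^{-1}\bigl(f(y+\epsilon a)-f(y)\bigr)$ and to use a hyperbolicity-type fact for ideal G\aa rding polynomials. Namely, I would show that for $y\in\mathcal C_{D_af}$ the univariate polynomial $t\mapsto f(y+ta)$ has all of its relevant roots controlled, and then express
\[
\mathcal C_{D_af}=\{y:\ y+ta\in\mathcal C_f\ \text{for some } t\text{ with } \partial_t f(y+ta)\ \text{beyond the last critical point}\}.
\]
This set is a projection-type image of convex data. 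If that fails, I would instead prove the weaker invariance in the form actually needed, as in Theorem~\ref{thm ideal pola}. There, $f$ is a polarization of a univariate G\aa rding polynomial, and $D_af$ is handled directly through \eqref{eq:commutation} and the majorization/convex-hull description from Theorem~\ref{Rado}.
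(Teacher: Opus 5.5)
Your first half is correct. You show $\mathcal{C}_{T^*f}=T^{-1}(\mathcal{C}_f)$ properly: it is convex, PRT, open and relatively closed in $\{T^*f>0\}$, and unique by the ray argument. The identity $\partial_i(T^*f)=T^*(D_{a^{(i)}}f)$ then correctly reduces the theorem to your statement (*). The paper gives no proof of this result; it imports it from \cite{FM26b}.

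The gap is that (*) carries essentially all of the content, and it is not proved. In fact (*) is equivalent to the theorem. The map $S(y,t)=y+ta$ has matrix $[\,I_m\mid a\,]$, which is nonnegative with no zero row. So the theorem makes $\partial_t(f\circ S)=(D_af)\circ S$ ideal, and slicing its component at $t=0$ gives exactly (*). Your induction therefore relocates the difficulty rather than removing it.

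Concretely, $D_af\in\mathrm{I}_m^{d-1}$ needs three things you do not establish.

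\emph{Existence of a PRT component.} You show only that $\mathcal{C}_f$ lies in \emph{some} component of $\{D_af>0\}$. The ray argument gives uniqueness of a PRT component, not its existence.

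\emph{The defining inclusion $\mathcal{C}_{D_af}\subset\mathcal{C}_{D_a\partial_kf}$ for every $k$.} Your induction only gives $\mathcal{C}_{\partial_kf}\subset\mathcal{C}_{D_a\partial_kf}$. This cannot be chained, because $\mathcal{C}_{D_af}$ is generally not inside $\mathcal{C}_{\partial_kf}$. For $f=y_1y_2-1=\Pol_2(s^2-1)$ and $a=\one$, one has $\mathcal{C}_{D_af}=\{y_1+y_2>0\}\not\subset\{y_1>0\}=\mathcal{C}_{\partial_2f}$.

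\emph{Convexity.} You flag this yourself, and neither proposed route closes it.
\begin{enumerate}
\item Restrictions of ideal G\aa rding polynomials to positive lines need not be real-rooted. For $n\ge3$, the diagonal restriction of $\Pol_n(s^3-1)$ is $s^3-1$. So no hyperbolicity or Renegar-type interlacing is available.
\item Your displayed description of $\mathcal{C}_{D_af}$ is wrong as written. For $a\in\Gamma_m^+$, every $y$ has $y+ta\in\mathcal{C}_f$ for all large $t$, so the set is all of $\R^m$.
\item The reading that makes it correct is that $0$ exceeds the largest real root of $t\mapsto D_af(y+ta)$. Under that reading, convexity of $\mathcal{C}_{D_af}$ becomes convexity of this root as a function of $y$. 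That is precisely what must be shown, and it is not a projection of convex data. Lemma~\ref{lem:quotient} runs the opposite direction: it derives convexity of such root functions \emph{from} convexity of components.
\item Your fallback proves a different statement. The theorem concerns arbitrary ideal $f$, not only polarizations. Moreover, \eqref{eq:commutation} covers only $a=\one$; for general $a\ge0$, $D_a\Pol_n(p)$ is not a polarization.
\end{enumerate}
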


\subsection{Situation where (ii) in the Assumption \ref{ass1} holds}

\begin{lemma}\label{lem ass 2}
Suppose that $p_z$ is strictly right-Noetherian for any $z\in X$. Suppose that $a_i(z)$ are smooth functions for $0 \le i \le d-1$.
Suppose that one of the following conditions holds. 
\begin{enumerate}
    \item  For any $z\in X$, $p_z$ is strongly strictly right-Noetherian.
    \item $a_i$ are constant functions for $1\le i \le d-1$. 
\end{enumerate}
Then, we have that $r_i(z)$ are smooth functions for $0 \le i \le d-1$.
\end{lemma}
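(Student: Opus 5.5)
We prove smoothness of each $r_i(z)$ by the implicit function theorem, applied to the equation $p_z^{(i)}(r_i(z))=0$, wherever the root $r_i(z)$ is simple. Here $p_z^{(i)}$ is a monic polynomial of degree $d-i$ whose coefficients are constant multiples of $a_i,\ldots,a_{d-1}$, so it depends smoothly on $z$. Hence the map $F_i(z,y):=p_z^{(i)}(y)$ is smooth on $X\times\R$. If $\partial_yF_i(z_0,r_i(z_0))=p_{z_0}^{(i+1)}(r_i(z_0))\neq0$, the implicit function theorem gives a smooth local branch $y(z)$ with $y(z_0)=r_i(z_0)$. We must also check that this branch is the \emph{largest} real root for $z$ near $z_0$.

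We use the root-parameter representation $p_z=p_{\s(z)}$ with $\s(z)=(r_0(z),\ldots,r_{d-1}(z))\in\K$. By \eqref{eq:rootderivatives}, $p_z^{(i)}>0$ on $(r_i(z),\infty)$.

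\textbf{Simplicity of the root.} By \eqref{eq:rootderivatives}, $p_z^{(i+1)}$ has largest root $r_{i+1}(z)\le r_i(z)$ and is positive to the right of it. So $p_z^{(i+1)}(r_i(z))>0$ exactly when $r_i(z)>r_{i+1}(z)$. In that case $r_i(z)$ is a simple root. (For $i=d-1$, $p_z^{(d-1)}$ is linear, $r_{d-1}=a_{d-1}/d$ by monicity, and this is smooth directly.)

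\textbf{Largest root persists.} Given the smooth branch $y(z)$, suppose there were roots of $p_z^{(i)}$ larger than $y(z)$ for $z$ arbitrarily close to $z_0$. These roots are bounded, since the coefficients are bounded and the polynomial is monic. Passing to a limit yields a root of $p_{z_0}^{(i)}$ that is $\ge r_i(z_0)$. By simplicity and local uniqueness from the implicit function theorem, this root cannot equal $r_i(z_0)$, and it cannot exceed $r_i(z_0)$ either, which is a contradiction. Hence $y(z)=r_i(z)$ near $z_0$, and $r_i$ is smooth near $z_0$.

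\textbf{Case (i).} Under strong strictness, $r_i(z)>r_{i+1}(z)$ for all $i\le d-2$ and all $z$. So every $r_i$ is a simple root everywhere, and the argument above gives smoothness everywhere.

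\textbf{Case (ii).} Here $a_1,\ldots,a_{d-1}$ are constant. Then $p_z^{(i)}$ for $i\ge1$ is independent of $z$, so $r_1,\ldots,r_{d-1}$ are constants and trivially smooth. For $i=0$, strict right-Noetherianity gives $r_0(z)>r_1(z)$ for every $z$. So $r_0(z)$ is a simple root of $p_z$, and the argument above shows $r_0$ is smooth.

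The only delicate step is the one just described: a simple root's local branch remains the largest root. It relies on the roots being locally bounded (monic polynomial with continuous coefficients) together with the local uniqueness part of the implicit function theorem. Everything else follows from \eqref{eq:rootderivatives} and the definitions.
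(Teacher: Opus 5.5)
Your proof is correct and follows the same route as the paper: the implicit function theorem at a simple largest root, with simplicity coming from $r_i>r_{i+1}$ in case (i), and from $r_0>r_1$ together with the $z$-independence of $p_z^{(i)}$ for $i\ge1$ in case (ii). You also justify a step the paper leaves implicit, namely that the local implicit-function branch remains the largest real root, using boundedness of the roots of a monic polynomial and the local uniqueness part of the implicit function theorem.
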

\begin{proof}
    First, we assume that for any $z\in X$, $p_z$ is strongly strictly right-Noetherian. Then, for any $i\le d-1$, $\frac{d}{ds}p_z^{(i)}(r_i)\neq 0$. By the implicit function theorem, $r_i(z)$ is a smooth function on $X$.

    Next, we assume that $a_i$ are constant functions for $1\le i \le d-1$. Using the argument above and the assumption that $p_z$ is strictly right-Noetherian, we can get that $r_0(z)$ is smooth. Since $a_i$ are constant functions for $1\le i \le d-1$, we have that the coefficients of $p_z^{(i)}$ are constant functions. As a results, $r_i(z)$ are constant functions for $1\le i\le d-1$.
\end{proof}

\section{Relative position between origin and Garding component}\label{sec:radial-condition}
The following radial condition will play a key role in our analysis
\begin{equation}\label{eq:17}
\lambda\cdot\nabla P_z(\lambda)>0
\qquad(z\in X,\ \lambda\in\partial \mathcal{C}_{P_z}).
\end{equation}
For a strictly right-Noetherian polynomial, the following theorem shows
that this is equivalent to the last condition in
Assumption~\ref{ass1}.
\begin{theorem}\label{thm relative position}
Let $p$ be a monic right-Noetherian polynomial of degree $2\le d\le n$, and let $P=\Pol_n(p)$. The following conditions are equivalent:
\begin{enumerate}
\item
\begin{equation}\label{e boundary gradient}
\lambda\cdot\nabla P(\lambda)>0
\qquad\text{for every }\lambda\in\partial \mathcal{C}_P.
\end{equation}
\item
\[
r_0>r_1
\qquad\text{and}\qquad
r_{d-1}\ge0.
\]
\item
The polynomial \(p\) is strictly right-Noetherian and
\[
a_{d-1}\ge0.
\]
\end{enumerate}
Under these conditions, the sharp lower bound is
\[
\min_{\lambda\in\partial C_P}
\lambda\cdot\nabla P(\lambda)
=r_0p'(r_0)>0.
\]
\end{theorem}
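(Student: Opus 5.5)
The plan is to dispose of (ii)$\Leftrightarrow$(iii) first, since it is elementary. Write $p=p_{\s}$ with $\s=(r_0,\ldots,r_{d-1})\in\K$, as in \eqref{eq:family}. Then $p^{(d-1)}(x)=d!\,x-(d-1)!\,a_{d-1}$, so $r_{d-1}=a_{d-1}/d$. Hence $r_{d-1}\ge0$ is equivalent to $a_{d-1}\ge0$, and $r_0>r_1$ is the definition of strict right-Noetherianity. The real content is (i)$\Leftrightarrow$(ii) together with the sharp bound.

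\emph{(i)$\Rightarrow$(ii).} The diagonal point $r_0\one$ lies in $\partial\mathcal C_P$, and there $\lambda\cdot\nabla P(\lambda)=\frac{d}{dt}\big|_{t=1}p(tr_0)=r_0p'(r_0)$. By right-Noetherianity $p'(r_0)\ge0$, with $p'(r_0)=0$ exactly when $r_0=r_1$. So (i) forces $r_0>r_1$ and $r_0>0$. To get $r_{d-1}\ge0$, I will use the identity
\[
\lambda\cdot\nabla P=\sum_k k c_kS_k=\sum_{k<d}(d-k)a_kS_k\quad\text{on }\{P=0\},
\]
and evaluate it along a family of boundary points escaping to infinity. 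A natural choice is $\lambda=(M,\ldots,M,\mu,\ldots,\mu)$ with $d-1$ entries equal to $M$, where $\mu=\mu(M)$ is chosen, using PRT and the intermediate value theorem, so that $\lambda\in\partial\mathcal C_P$. Along such a family $S_{d-1}$ should dominate every $S_k$ with $k<d-1$, so the sign of $\lambda\cdot\nabla P$ is eventually that of $a_{d-1}$. Making this asymptotic rigorous means controlling $\mu(M)$; I expect it to be fiddly but routine.

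\emph{(ii)$\Rightarrow$(i).} This is the main step, and I would do it by scaling in the root parameters. Homogeneity of \eqref{eq:iterated} gives $p_{\s}(tx)=t^dp_{\s/t}(x)$, hence $P_{\s}(t\lambda)=t^dP_{\s/t}(\lambda)$. Differentiating at $t=1$ and using $P_{\s}(\lambda)=0$ on the boundary gives
\[
\lambda\cdot\nabla P_{\s}(\lambda)=-\sum_{j=0}^{d-1}s_j\,\partial_{s_j}P_{\s}(\lambda).
\]
Differentiating \eqref{eq:iterated} in the lower limit $s_j$ shows that $-\partial_{s_j}p_{\s}$ is a positive multiple of an iterated integral built from $(s_0,\ldots,s_{j-1})$ and a constant. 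Its polarization is therefore itself a polarization of a right-Noetherian polynomial with roots $\ge s_j$. The aim is to show, via the dominance theorem (Theorem~\ref{UpsilonDominance}) and the containment of $\mathcal C_P$ in the derived components, that these polarizations are $\ge0$ on $\overline{\mathcal C_P}$.

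For $j=0$ the term is exact: $-\partial_{s_0}P_{\s}=d!\,\frp_{d-1}(s_0;s_1,\ldots)=p'(s_0)$, a positive constant when $s_0>s_1$. Since every $s_j\ge r_{d-1}\ge0$, all summands are nonnegative and the $j=0$ summand equals $r_0p'(r_0)>0$. This gives (i) and the lower bound $\lambda\cdot\nabla P\ge r_0p'(r_0)$ at once; the diagonal point shows the bound is attained. I expect the sign verification for $j\ge1$ — matching the polynomial $\partial_{s_j}p_{\s}$ to a dominated right-Noetherian polynomial and checking the hypotheses of Theorem~\ref{UpsilonDominance} — to be the main obstacle.
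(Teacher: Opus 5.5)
Your proposal is correct. For (ii)$\Leftrightarrow$(iii) and (i)$\Rightarrow$(ii) it matches the paper's proof: the diagonal point $r_0\one$ forces $r_0>r_1$, and the family $(M,\ldots,M,\mu,\ldots,\mu)$ with the Euler identity on $\{P=0\}$ forces $a_{d-1}\ge0$. The control of $\mu(M)$ that you postpone is handled by the expansion $P(M,\ldots,M,\mu,\ldots,\mu)=\frac{n-d+1}{\binom nd}(\mu-c)M^{d-1}+O(M^{d-2})$ with $c=a_{d-1}/d$. Descending in $\mu$ from a large $B>r_0$, this shows the first zero lies within $\delta$ of $c$, so it is a genuine point of $\partial\mathcal C_P$ with bounded $\mu$.

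For (ii)$\Rightarrow$(i) you take a genuinely different route. The paper makes a single finite comparison. The polynomials $q_t(s)=t^dp(s/t)$ and $h_t(s)=p(s)-p(tr_0)$ have root sequences $(tr_0,\ldots,tr_{d-1})$ and $(tr_0,r_1,\ldots,r_{d-1})$, so $r_j\ge0$ gives $q_t\triangleright h_t$. Theorem~\ref{UpsilonDominance} then gives $t\mathcal C_P\subseteq\mathcal C_{P-p(tr_0)}$, hence $P(t\lambda)\ge p(tr_0)$ on $\partial\mathcal C_P$, and the right derivative at $t=1$ gives the bound.

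You instead differentiate the joint homogeneity of $P_{\s}(\lambda)$ in $(\lambda,\s)$. On the boundary this gives $\lambda\cdot\nabla P=r_0p'(r_0)+\sum_{j\ge1}r_jC_j\Pol_n(p_j)(\lambda)$, with $p_j=j!\frp_j(\,\cdot\,;r_0,\ldots,r_{j-1})$. The coefficients satisfy $C_j\ge0$ but may vanish when $r_j=r_{j+1}$, so your ``positive multiple'' should read ``nonnegative multiple''; this is harmless.

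The step you flag as the main obstacle does go through:
\begin{itemize}
\item for $k<j$, the largest root of $p_j^{(k)}$ is $r_k$, the same as for $p^{(k)}$;
\item $p_j^{(j)}=j!>0$;
\item hence $p\triangleright p_j$, and Theorem~\ref{UpsilonDominance} gives $\mathcal C_P\subseteq\mathcal C_{\Pol_n(p_j)}$.
\end{itemize}
This is exactly the paper's later Lemma~\ref{lem gradient P for}, which it proves for the $C^2$ estimate.

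The two routes trade off as follows. Yours needs $d-1$ dominance relations and the parameter-derivative formulas. In return it writes $\lambda\cdot\nabla P$ as a sum of nonnegative pieces, which makes the sharp bound and its equality cases transparent, and it reuses the machinery of Section~\ref{sec:concavity}. The paper's route needs one dominance relation and no root-parameter calculus. It also gives the stronger finite-scale inequality $P(t\lambda)\ge p(tr_0)$ for all $t>1$. Since $\mathcal C_{P-p(tr_0)}\subseteq\mathcal C_P$, that inequality already contains the outward invariance $t\mathcal C_P\subseteq\mathcal C_P$ used in Proposition~\ref{prop:radial-nesting}.
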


\begin{proof}
Since
\[
p^{(d-1)}(s)=d!s-(d-1)!a_{d-1},
\]
we have
\[
r_{d-1}=\frac{a_{d-1}}{d}.
\]
Thus the second and third conditions are equivalent.

\medskip
\noindent
$(ii)\Longrightarrow (i)$. Assume \(r_0>r_1\) and \(r_{d-1}\ge0\). Then
\begin{equation}\label{e nonnegative root}
    r_0>r_1\ge\cdots\ge r_{d-1}\ge0.
\end{equation}

We have that 
\begin{equation}\label{e r0 pr0 pos}
r_0>0, p'(r_0)>0.    
\end{equation}

For \(t>1\), define
\[
q_t(s)=t^dp(s/t),
\qquad
h_t(s)=p(s)-p(tr_0).
\]
Their derivative-root sequences are
\[
R(q_t)=(tr_0,tr_1,\ldots,tr_{d-1}),
\qquad
R(h_t)=(tr_0,r_1,\ldots,r_{d-1}).
\]
Indeed, \(h_t'=p'\), while \(p'>0\) on
\((r_1,\infty)\); hence \(tr_0\) is the largest real root
of \(h_t\).

Both polynomials are monic and right-Noetherian. Using (\ref{e nonnegative root}) and the assumption $t>1$, we have that $q_t \triangleright h_t$. Using Theorem \ref{UpsilonDominance}, we have that:
\[
\mathcal{C}_{\Pol_n(q_t)}\subseteq \mathcal{C}_{\Pol_n(h_t)}.
\]
Since
\[
\Pol_n(q_t)(\lambda)=t^dP(\lambda/t),
\qquad
\Pol_n(h_t)(\lambda)=P(\lambda)-p(tr_0),
\]
this inclusion becomes
\[
t\mathcal{C}_P\subseteq \mathcal{C}_{P-p(tr_0)}.
\]
Passing to closures gives, for every
\(\lambda\in\partial C_P\),
\[
P(t\lambda)\ge p(tr_0).
\]
Both sides vanish at \(t=1\). Taking the right derivative
at \(t=1\), we obtain
\[
\lambda\cdot\nabla P(\lambda)\ge r_0 p'(r_0)>0.
\]

Differentiating \(P(s\one)=p(s)\) gives
\[
(r_0\one)\cdot\nabla P(r_0\one)=r_0p'(r_0).
\]
Therefore the lower bound is sharp and is attained. By (\ref{e r0 pr0 pos}), we have that $r_0p'(r_0)>0$.

$(i)\Longrightarrow (ii)$. Suppose that \(r_0=r_1\), then \(p'(r_0)=0\). At the diagonal
boundary point \(r_0\one\),
\[
(r_0\one)\cdot\nabla P(r_0\one)
=r_0p'(r_0)=0.
\]
This contradicts with (\ref{e boundary gradient}). As a result, we have that $r_0>r_1$.

Set
\[
m=n-d+1,
\qquad
c=\frac{a_{d-1}}d,
\]
and consider
\[
\lambda(T,s)=
(\underbrace{T,\ldots,T}_{d-1},
 \underbrace{s,\ldots,s}_{m}),
\qquad
q_T(s)=P(\lambda(T,s)).
\]
Counting the terms of highest degree in \(T\) gives
\[
q_T(s)
=
\frac{m}{\binom nd}(s-c)T^{d-1}
+O(T^{d-2}),
\]
uniformly in \(C^1\) on every fixed compact
\(s\)-interval. Here we used
\[
\frac{\binom nd}{\binom n{d-1}}=\frac md.
\]
The implicit function theorem, applied with parameter
\(T^{-1}\), therefore supplies a zero
\[
s_T=c+O(T^{-1}).
\]

We must verify that this zero belongs to the
distinguished boundary.
Fix \(\delta>0\) and choose
\[
B>\max\{r_0,c+2\delta\}.
\]
For sufficiently large \(T\), the preceding expansion
shows that \(q_T'>0\) on
\([c-\delta,c+\delta]\) and \(q_T>0\) on
\([c+\delta,B]\). Hence
\[
q_T(s)>0\qquad(s_T<s\le B).
\]
For \(T\ge B\), the point \(\lambda(T,B)\) belongs to
\(\mathcal{C}_P\), because $B > r_0$ and $\CC_P$ satisfies PRT.
Thus the positive path
\[
\{\lambda(T,s):s_T<s\le B\}
\]
lies in \(\mathcal{C}_P\), and its zero endpoint satisfies
\[
\lambda_T:=\lambda(T,s_T)\in\partial \mathcal{C}_P.
\]

Euler's identity, together with \(P(\lambda_T)=0\),
gives
\[
\lambda_T\cdot\nabla P(\lambda_T)
=
\sum_{k=0}^{d-1}(d-k)a_kS_k(\lambda_T).
\]
Since \(s_T\) remains bounded,
\[
S_{d-1}(\lambda_T)
=
\frac{T^{d-1}}{\binom n{d-1}}+O(T^{d-2}),
\qquad
S_k(\lambda_T)=O(T^k)
\quad(k\le d-2).
\]
Consequently,
\[
\lambda_T\cdot\nabla P(\lambda_T)
=
\frac{a_{d-1}}{\binom n{d-1}}T^{d-1}
+O(T^{d-2}).
\]
If \(a_{d-1}<0\), this quantity is negative for all
sufficiently large \(T\), contradicting the desired
boundary positivity. This proves that $a_{d-1}\ge 0$.
\end{proof}

\begin{corollary}[Parameter-dependent version]
Let \(p_z\) be a smooth family of monic strictly
right-Noetherian polynomials of fixed degree
\(2\le d\le n\), parametrized by a compact manifold \(M\).
Let \(P_z=\Pol_n(p_z)\).

Then
\[
\lambda\cdot\nabla_\lambda P_z(\lambda)>0
\qquad
\bigl(z\in M,\ \lambda\in\partial \mathcal{C}_{P_z}\bigr)
\]
holds if and only if
\[
a_{d-1}(z)\ge0\qquad(z\in M).
\]
Under this condition,
\[
\min_{\substack{z\in M\\
                \lambda\in\partial C_{P_z}}}
\lambda\cdot\nabla_\lambda P_z(\lambda)
=
\min_{z\in M}r_0(z)p_z'(r_0(z))>0.
\]
\end{corollary}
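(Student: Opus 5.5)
The corollary follows by applying Theorem~\ref{thm relative position} pointwise in $z$ and then using compactness of $M$ to pass from the pointwise bounds to the uniform minimum.

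\emph{Equivalence.} Fix $z\in M$. By hypothesis $p_z$ is monic and strictly right-Noetherian of degree $d$, so $r_0(z)>r_1(z)$ holds automatically. Theorem~\ref{thm relative position}, applied to $p=p_z$, then says that $\lambda\cdot\nabla_\lambda P_z(\lambda)>0$ for all $\lambda\in\partial\mathcal C_{P_z}$ if and only if $a_{d-1}(z)\ge0$. Quantifying over $z\in M$ gives the stated equivalence directly.

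\emph{Value of the minimum.} Under the condition, the same theorem gives, for each $z$,
\[
\inf_{\lambda\in\partial\mathcal C_{P_z}}\lambda\cdot\nabla_\lambda P_z(\lambda)=r_0(z)p_z'(r_0(z))>0,
\]
and this infimum is attained at the diagonal point $r_0(z)\one$. Taking the infimum over $z\in M$, the joint infimum equals $\inf_{z\in M} r_0(z)p_z'(r_0(z))$. It is attained at the pair $(z^*, r_0(z^*)\one)$ as soon as the function $z\mapsto r_0(z)p_z'(r_0(z))$ attains its minimum at some $z^*\in M$. Since $M$ is compact, it suffices to show that this function is continuous and pointwise positive. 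Positivity was shown in \eqref{e r0 pr0 pos}.

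\emph{Continuity of $r_0$ (the only non-formal step).} I would argue as in Lemma~\ref{lem ass 2}. Strict right-Noetherianity means $r_0(z)>r_1(z)$, and $p_z'>0$ on $(r_1(z),\infty)$, so $p_z'(r_0(z))\neq0$ and $r_0(z)$ is a simple root of $p_z$. The implicit function theorem therefore gives a smooth local branch $\tilde r(w)$ of roots of $p_w$ near $z$ with $\tilde r(z)=r_0(z)$. We must still check that this branch remains the \emph{largest} real root. Since $p_z>0$ on $(r_0(z),\infty)$ and the coefficients depend continuously on $w$, a compactness argument shows that $p_w>0$ on $[r_0(z)+\epsilon,R]$ for $w$ near $z$. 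Monicity together with uniform coefficient bounds makes $p_w$ positive for $s\ge R$ when $R$ is large. On the remaining interval $(r_0(z)-\epsilon,r_0(z)+\epsilon)$, $p_w$ is strictly increasing for $w$ near $z$ because $p_z'(r_0(z))>0$, so $\tilde r(w)$ is its only root there. Hence $r_0(w)=\tilde r(w)$ near $z$, so $r_0$ is smooth and in particular continuous.

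Composing, $z\mapsto r_0(z)p_z'(r_0(z))$ is continuous and positive on the compact manifold $M$, so its minimum exists and is positive, which completes the proof.
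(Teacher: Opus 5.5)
Your proof is correct and follows essentially the same route as the paper: apply Theorem~\ref{thm relative position} in each fiber, get smoothness of $r_0$ from the simplicity of the top root via the implicit function theorem, and conclude by compactness of $M$. Your explicit check that the implicit-function branch stays the \emph{largest} real root spells out what the paper compresses into the phrase ``continuity of polynomial roots.''
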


\begin{proof}
Apply the theorem in each fiber.
Strict right-Noetherianity makes the largest root
\(r_0(z)\) simple. The implicit function theorem,
together with continuity of polynomial roots, makes
\(r_0(z)\) smooth.
Thus
\[
z\longmapsto r_0(z)p_z'(r_0(z))
\]
is a positive continuous function on the compact
manifold \(M\), so its minimum is positive.
The equality follows from the diagonal equality in
each fiber. No smoothness of the lower derivative
roots is required.
\end{proof}

\section{Polynomial components and the radial scale}\label{sec:geometry}
We work on a compact connected Hermitian manifold $(X,\chi)$, with
$\omega_0$ a smooth real $(1,1)$-form. The polynomial family
satisfies Assumption~\ref{ass1}(i)--(ii). The radial condition
\eqref{eq:17} is imposed where stated.

\begin{proposition}\label{prop:derived-component}
The domain $ \mathcal{C}_{P_z}^{(1)}$ is open, convex, symmetric, and stable under addition of $\overline{\Gamma_n}$. It is the connected component of $\{\lambda:\partial_iP_z(\lambda)>0\ (1\le i\le n)\}$ containing sufficiently large diagonal points. Moreover, \begin{equation}\label{eq:7}
\mathcal{C}_{P_z}^{(1)}=\bigcap_{i=1}^n \mathcal{C}_{\partial_iP_z}
=\{\lambda\in\R^n:\partial_IP_z(\lambda)>0,
\ 1\le |I|\le d-1\},
\end{equation}
\begin{equation}
 \mathcal{C}_{P_z}= \mathcal{C}_{P_z}^{(1)}\cap\{P_z>0\}.
\label{eq:25}
\end{equation}

\end{proposition}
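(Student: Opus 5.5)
The plan is to reduce everything to the algebraic theory of Fang--Ma applied fiberwise: fix $z$ and write $P=P_z=\Pol_n(p_z)$. By Lemma~\ref{FangMaLinCorrespondence}, $p_z\in\Ga_1$, so $P$ is a G\aa rding polynomial. Theorem~\ref{thm ideal pola} then says $P$ is ideal G\aa rding. Unwinding the recursive definition of $\mathrm I_n^d$ gives the following facts. Every iterated derivative $\partial_I P$ is ideal G\aa rding with convex component. We have $\CC_P\subset\bigcap_i\CC_{\partial_iP}$, and the same holds recursively. Since $P$ is multi-affine, only square-free $\partial_I$ with $|I|\le d$ are nontrivial, and $\partial_I P$ is a positive constant when $|I|=d$.

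First, the basic properties. Openness holds because each $\CC_{\partial_iP}$ is an open component of an open set. Convexity holds because each $\CC_{\partial_iP}$ is convex (ideal G\aa rding), and intersections of convex sets are convex. For PRT, each component satisfies $\CC+\overline{\Gamma_n^+}\subset\CC$ by the G\aa rding definition, and this passes to intersections. For symmetry, $P$ is symmetric, so a permutation $\pi$ maps $\CC_{\partial_iP}$ onto $\CC_{\partial_{\pi(i)}P}$; this uses uniqueness of the PRT component, since the image is a PRT component of $\{\partial_{\pi(i)}P>0\}$. Hence the intersection is symmetric. The intersection contains large diagonal points $x\one$: $\partial_iP(x\one)=p'(x)/n>0$ for $x>r_1$. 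It is convex, hence connected, and it lies inside the open set $\{\partial_iP>0\ \forall i\}$. It is therefore contained in the component $U$ of that set through large diagonal points. Conversely, $U$ is connected, contains large diagonal points, and lies in $\{\partial_iP>0\}$ for each $i$, so $U\subset\CC_{\partial_iP}$ for each $i$. This gives $U=\mathcal C^{(1)}_P$.

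Next, the first description of \eqref{eq:7}. Let $W=\{\partial_IP>0,\ 1\le|I|\le d-1\}$. For $\lambda\in\mathcal C^{(1)}_P$, the recursive inclusion $\CC_{\partial_iP}\subset\CC_{\partial_j\partial_iP}\subset\cdots$ from Definition~\ref{FMGdefinition}(ii) shows $\lambda\in\CC_{\partial_IP}$ for all $I\ni i$, so $\partial_IP(\lambda)>0$. This gives $\mathcal C^{(1)}_P\subset W$. For the reverse inclusion, I would argue by descending induction on $|I|$ that $W\subset\CC_{\partial_IP}$. When $|I|=d-1$, $\partial_IP$ is affine with positive gradient coefficients, so $\{\partial_IP>0\}$ is a half-space and equals its own component. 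For the inductive step, take $\lambda\in W$ and consider the segment $\lambda+t\one$, $t\ge0$, which stays in $W$. Since $\partial_IP$ is multi-affine, along each coordinate direction $e_j$ it is affine with slope $\partial_j\partial_IP>0$ on $W$ (for $j\notin I$). Thus $\partial_IP$ is increasing along $\lambda+t\one$ and stays positive. For large $t$ the point lies in $\CC_{\partial_IP}$, so the whole ray lies in one connected set inside $\{\partial_IP>0\}$, namely $\CC_{\partial_IP}$. Applying this at $|I|=1$ gives $W\subset\mathcal C^{(1)}_P$.

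Finally, \eqref{eq:25}. The inclusion $\CC_P\subset\mathcal C^{(1)}_P\cap\{P>0\}$ is the ideal condition. For the converse, take $\lambda\in\mathcal C^{(1)}_P$ with $P(\lambda)>0$. Along $\lambda+t\one$ we have $\frac{d}{dt}P=\sum_i\partial_iP>0$, since the ray stays in $\mathcal C^{(1)}_P$ by PRT. So $P>0$ along the ray, which eventually enters $\CC_P$, and the same connectivity argument gives $\lambda\in\CC_P$. The main obstacle I expect is the descending induction for $W\subset\CC_{\partial_IP}$. There one must make sure that the ray argument only uses positivity of the next-order derivatives on the ray, which $W$ provides, and does not secretly need $\lambda$ to already lie in a component.
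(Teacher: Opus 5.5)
Your proposal is correct and follows essentially the same route as the paper. Fang--Ma's ideal G\aa rding structure gives $\mathcal C_P\subset\mathcal C_P^{(1)}\subset W$ together with openness, convexity and upward stability; a path toward large diagonal points gives $W\subset\mathcal C_P^{(1)}$ and \eqref{eq:25}; and connectedness plus convexity identifies $\mathcal C_P^{(1)}$ with the component $U$. The only difference is that the paper uses the segment from $\lambda$ to $T\one$ and the multi-affine expansion \eqref{eq:26}, which gives positivity of every $\partial_JP$ along the path in one line; taking $y=t\one$ in \eqref{eq:26} also justifies your unproved claim that $\lambda+t\one$ stays in $W$, so your descending induction is not actually needed.
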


\begin{proof}
By \cite[Definition 1.1 and Theorem 1.4(2)]{FM26b}, $P_z$ and its nonzero derivatives are ideal G\aa rding. Their distinguished components are open, convex, and stable under addition of $\Gamma_n$, with
\[
 \mathcal{C}_{P_z}\subset \mathcal{C}_{P_z}^{(1)}\subset\{\lambda:\partial_IP_z(\lambda)>0,\ 1\le|I|\le d-1\}.
\]
Here we write
$[n]=\{1,\ldots,n\}$, and set $\partial_I=\prod_{i\in I}\partial_{\lambda_i}$.
Multi-affinity gives
\begin{equation}
\partial_JP_z(\lambda+y)=\sum_{I\subset[n]\setminus J}\partial_{I\cup J}P_z(\lambda)\prod_{i\in I}y_i
\qquad(J\subset[n]),
\label{eq:26}
\end{equation}
where $\partial_\varnothing P_z=P_z$, $\partial_IP_z=\binom nd^{-1}$ for $|I|=d$, and $\partial_IP_z=0$ for $|I|>d$. Suppose $\partial_IP_z(\lambda)>0$ for $1\le|I|\le d-1$. Choose $T>\max_i\lambda_i$ sufficiently large and put $\gamma(t)=\lambda+t(T\one-\lambda)$. Then
\[
\partial_iP_z(\gamma(t))>0\quad(0\le t\le1),\qquad
\gamma(1)=T\one\in \mathcal{C}_{\partial_iP_z}\quad\Longrightarrow\quad\lambda\in \mathcal{C}_{\partial_iP_z}.
\]
This proves \eqref{eq:7}. If also $P_z(\lambda)>0$, \eqref{eq:26} with $J=\varnothing$ gives $P_z(\gamma(t))>0$; hence $\lambda\in \mathcal{C}_{P_z}$, proving \eqref{eq:25}. Let $\Omega$ be the component of $\bigcap_i\{\partial_iP_z>0\}$ containing $T\one$. Connectedness of $\Omega$ and convexity of $ \mathcal{C}_{P_z}^{(1)}$ give, respectively,
\[
\Omega\subset\bigcap_i \mathcal{C}_{\partial_iP_z}= \mathcal{C}_{P_z}^{(1)}\subset\Omega.
\]
Permutation invariance of $P_z$ gives symmetry of $ \mathcal{C}_{P_z}$ and $ \mathcal{C}_{P_z}^{(1)}$; their upward stability follows from \eqref{eq:26}.
\end{proof}

\begin{lemma}\label{lem:regular-components}
For every $z\in X$,
\begin{equation}\label{eq:29}
\max_{\partial C_{P_z}^{(1)}}P_z=p_z(r_1(z))<0,
\end{equation}
with equality at $r_1(z)\one$.
\end{lemma}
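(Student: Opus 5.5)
Three facts need to be shown: (a) the value $p_z(r_1\one)=p_z(r_1)$ is attained because $r_1\one\in\partial\mathcal C^{(1)}_{P_z}$; (b) $p_z(r_1)<0$; (c) $P_z\le p_z(r_1)$ on all of $\partial\mathcal C^{(1)}_{P_z}$. Fix $z$ and drop it from the notation.

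\emph{Attainment and sign.} By \eqref{eq:commutation}, $\D P=\Pol_n(p')$ and $\partial_iP(s\one)=\tfrac1n p'(s)$ by symmetry. For $s>r_1$ all higher derivatives $p^{(j)}(s)$ are positive, since right-Noetherianity gives $r_j\le r_1$. So the diagonal points $s\one$ with $s>r_1$ satisfy \eqref{eq:7}, via $\partial_IP(s\one)=c_{|I|}p^{(|I|)}(s)$ with $c_{|I|}>0$. At $s=r_1$ we have $\partial_iP(r_1\one)=0$. Hence $r_1\one\in\partial\mathcal C^{(1)}_P$, and $P(r_1\one)=p(r_1)$. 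Strict right-Noetherianity gives $r_0>r_1$, and Lemma~\ref{lem neg ps} with $j=0$ gives $p<0$ on $[r_1,r_0)$. Therefore $p(r_1)<0$.

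\emph{Upper bound on the boundary.} I would use the domination theorem, following the pattern of the proof of Theorem~\ref{thm relative position}. Let $c>p(r_1)$ and put $h=p-c$. Its root sequence is $(r(h),r_1,\dots,r_{d-1})$ with $r(h)>r_1$, since $h'=p'>0$ on $(r_1,\infty)$ and $h(r_1)<0$. So $h$ is strictly right-Noetherian. Let $g$ be the right-Noetherian polynomial with root sequence $(r_1,r_1,r_2,\dots,r_{d-1})$, i.e.\ $g(x)=d!\,\frp_d(x;r_1,r_1,r_2,\ldots,r_{d-1})$. Then $g'=p'$, and $\mathcal C_{\Pol_n(g)}=\mathcal C^{(1)}_P\cap\{\Pol_n(g)>0\}$. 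I expect $\mathcal C_{\Pol_n(g)}=\mathcal C^{(1)}_P$: by \eqref{eq:26}, $\Pol_n(g)$ is positive on $\mathcal C^{(1)}_P$, because it is increasing along positive directions there and vanishes to first order only at the boundary. The cleaner route is that $g\triangleright h$: the derivatives agree from order one onward, and $r(g)=r_1<r(h)$. Theorem~\ref{UpsilonDominance} then gives $\mathcal C_{\Pol_n(g)}\subseteq\mathcal C_{\Pol_n(h)}=\mathcal C_{P-c}$.

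\emph{Main obstacle.} The key step is identifying $\mathcal C_{\Pol_n(g)}$ with $\mathcal C^{(1)}_P$, or at least showing that its closure contains $\partial\mathcal C^{(1)}_P$. I would first try to show $\Pol_n(g)>0$ on $\mathcal C^{(1)}_P$ using \eqref{eq:25} applied to $g$. Because the derived components of $g$ and $p$ coincide, it suffices to show that $\Pol_n(g)$ does not vanish inside $\mathcal C^{(1)}_P$. Suppose it did, with $\Pol_n(g)(\lambda)=0$ at an interior point. Since $\nabla\Pol_n(g)>0$ there, points slightly below $\lambda$ in $\mathcal C^{(1)}_P$ would have $\Pol_n(g)<0$. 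Along the segment from such a point to the boundary, the function would have to approach $0$ again at $\partial\mathcal C^{(1)}_P$. I would try to obtain the contradiction from the diagonal behaviour and homogeneity in the symmetric variables; I expect this to be the delicate part.

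Granting the identification, closure gives $\overline{\mathcal C^{(1)}_P}\subseteq\overline{\{P>c\}}$, so $P\ge c$ on the boundary. That inequality points the wrong way, so instead I would argue with the reverse domination. Take $c<p(r_1)$, so that $h=p-c$ has $r(h)<r_1$ in the limiting sense. Then $\{P-c>0\}\supseteq\mathcal C^{(1)}_P$-boundary fails, and I would test points of $\partial\mathcal C^{(1)}_P$ directly. At such a point some $\partial_iP=0$. Multi-affinity writes $P(\lambda)=\lambda_i\partial_iP+P|_{\lambda_i=0}$, which reduces the bound to $n-1$ variables by induction on the pair $(n,d)$. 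This induction on dimension, using the polarization identity restricted to a slice, is the approach I would commit to for the inequality $P\le p(r_1)$.
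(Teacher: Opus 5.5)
\textbf{Gap: the boundary inequality is not proved.} Your attainment and sign arguments are fine: $r_1\one\in\partial\mathcal C^{(1)}_P$ follows from $\partial_IP(s\one)=\frac{(n-|I|)!}{n!}p^{(|I|)}(s)$, and $p(r_1)<0$ follows from $r_0>r_1$. The gap is the inequality $P\le p(r_1)$ on $\partial\mathcal C^{(1)}_P$, which is the real content of the lemma. None of your three routes establishes it.

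\emph{The key identification is false.} You claim $\mathcal C_{\Pol_n(g)}=\mathcal C^{(1)}_P$, but this fails for $d<n$. Take $n=3$, $d=2$, $p(s)=s^2-1$. Then $b=p(r_1)=-1$, $g(s)=s^2$, $\Pol_3(g)=\sigma_2/3$, and $\mathcal C^{(1)}_P=\{\lambda_j+\lambda_k>0\ (j\ne k)\}$. The point $(2,2,-\tfrac32)$ lies in $\mathcal C^{(1)}_P$, yet $\sigma_2=-2<0$ there. So $\Pol_n(g)$ changes sign inside $\mathcal C^{(1)}_P$.

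\emph{The domination is reversed.} For $c>p(r_1)$, $h=p-c$ is negative just to the right of $r_1$. Hence $\mathcal C_g=(r_1,\infty)\not\subset\{h>0\}$, and $g\triangleright h$ fails. What actually holds is $h\triangleright g$, i.e.\ $\mathcal C_{P-c}\subseteq\mathcal C_{P-b}$, which by itself says nothing about $P$ on $\partial\mathcal C^{(1)}_P$. For $c<p(r_1)$, $h>0$ on $[r_1,\infty)$, so $h$ is not right-Noetherian and Theorem~\ref{UpsilonDominance} is unavailable.

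\emph{The induction is only announced.} On the slice $\{\partial_iP=0\}$, the function $P|_{\lambda_i=0}$ is the $(n-1)$-variable polarization of $p(x)-xp'(x)/n$, a different polynomial. The constraint set is not the boundary of its derived component. So no inductive hypothesis of the same form applies.

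\textbf{The missing step.} It is a short topological argument, and you already wrote down its ingredient. For $b=p(r_1)$ you have $\mathcal C_{P-b}=\mathcal C^{(1)}_P\cap\{P>b\}$; this is \eqref{eq:25} for $g=p-b$, whose proof does not need $r(g)>r(g')$. You also have $\mathcal C^{(1)}_{P-b}=\mathcal C^{(1)}_P$. Now suppose $\lambda\in\partial\mathcal C^{(1)}_P$ had $P(\lambda)>b$. Points of $\mathcal C^{(1)}_P$ near $\lambda$ still satisfy $P>b$, so they lie in $\mathcal C_{P-b}$. A connected component of the open set $\{P>b\}$ is relatively closed in it, and $\lambda\in\{P>b\}$. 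Hence $\lambda\in\mathcal C_{P-b}\subseteq\mathcal C^{(1)}_P$, contradicting the openness of $\mathcal C^{(1)}_P$. This is the paper's proof. It needs neither the domination theorem nor the (false) equality $\mathcal C_{P-b}=\mathcal C^{(1)}_P$.
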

\begin{proof}
Fix $z$, and write $p=p_z$, $P=P_z$, and $b=p(r_1)$.
Since $p'>0$ on $(r_1,\infty)$ and $r_0>r_1$, one has
$b<p(r_0)=0$.
The root sequence of $p-b$ is $(r_1,r_1,r_2,\ldots,r_{d-1})$.
Proposition~\ref{prop:derived-component}, which also holds without a strict first root
inequality, gives
\[
 \mathcal{C}_{P-b}=\{P>b\}\cap \mathcal{C}_P^{(1)}.
\]
If $\lambda\in\partial \mathcal{C}_P^{(1)}$ and $P(\lambda)>b$, an
approximating sequence from $ \mathcal{C}_P^{(1)}$ eventually lies in
$ \mathcal{C}_{P-b}$. Since a component is closed relative to $\{P>b\}$,
this would imply $\lambda\in \mathcal{C}_{P-b}$, a contradiction. Hence
$P\le b$ on $\partial \mathcal{C}_P^{(1)}$. Note that $P(r_1(z)\one)=p_z(r_1(z))=0$ and $r_1(z)\one \in \partial \mathcal{C}_{P_z}^{(1)}$. This proves \eqref{eq:29}.
\end{proof}

\begin{lemma}\label{lem:component-closure}
For every $z\in X$:
\begin{enumerate}
\item
\begin{equation}\overline{ \mathcal{C}_{P_z}}=\{\lambda:P_z(\lambda)\ge0,\ \partial_IP_z(\lambda)\ge0\ (1\le|I|\le d-1)\}.\label{eq:174}\end{equation}
\item
\begin{equation}\overline{ \mathcal{C}_{P_z}}\subset \mathcal{C}_{P_z}^{(1)},\qquad\partial C_{P_z}=\{\lambda\in \mathcal{C}_{P_z}^{(1)}:P_z(\lambda)=0\}.\label{eq:175}\end{equation}
In particular, $\partial \mathcal{C}_{P_z}$ is smooth and $\partial_iP_z>0$ on it.
\end{enumerate}
\end{lemma}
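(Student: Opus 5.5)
The plan is to establish (i) and (ii) together, using \eqref{eq:25} from Proposition~\ref{prop:derived-component} and the strict negativity from Lemma~\ref{lem:regular-components}. Fix $z$ and write $P=P_z$. Let $K$ denote the right-hand side of \eqref{eq:174}, so that its interior-type counterpart is $\{P>0\}\cap \mathcal C_P^{(1)}=\mathcal C_P$ by \eqref{eq:7} and \eqref{eq:25}.

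For the inclusion $\overline{\mathcal C_P}\subset K$, take a limit of points where $P>0$ and $\partial_IP>0$; the weak inequalities pass to the limit. For the converse, take $\lambda\in K$ and perturb upward: $\lambda_\epsilon=\lambda+\epsilon\one$. By the multi-affine expansion \eqref{eq:26}, each $\partial_JP(\lambda_\epsilon)$ is a sum of nonnegative terms $\partial_{I\cup J}P(\lambda)\epsilon^{|I|}$. This sum includes the top term $\binom nd^{-1}\epsilon^{d-|J|}>0$, since all $\partial_{I\cup J}P(\lambda)\ge0$ for $|I\cup J|\le d-1$ (and $P(\lambda)\ge0$ when $J=\varnothing$). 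Hence $\partial_JP(\lambda_\epsilon)>0$ for every $|J|\le d-1$, including $J=\varnothing$. So $\lambda_\epsilon\in\mathcal C_P^{(1)}\cap\{P>0\}=\mathcal C_P$, and letting $\epsilon\to0$ gives $\lambda\in\overline{\mathcal C_P}$. This proves (i).

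For (ii), first show $\overline{\mathcal C_P}\subset\mathcal C_P^{(1)}$. Since $\mathcal C_P\subset\mathcal C_P^{(1)}$, we have $\overline{\mathcal C_P}\subset\overline{\mathcal C_P^{(1)}}$. If some $\lambda\in\overline{\mathcal C_P}$ lay on $\partial\mathcal C_P^{(1)}$, then $P(\lambda)\ge0$ by continuity. But Lemma~\ref{lem:regular-components} gives $P(\lambda)\le p(r_1)<0$, a contradiction; this step is where strict right-Noetherianity is essential.

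Next, $\partial\mathcal C_P\subset\{\lambda\in\mathcal C_P^{(1)}:P=0\}$: a boundary point lies in $\overline{\mathcal C_P}\subset\mathcal C_P^{(1)}$, has $P\ge0$, and cannot have $P>0$, since otherwise \eqref{eq:25} would put it in the open set $\mathcal C_P$. Conversely, if $\lambda\in\mathcal C_P^{(1)}$ and $P(\lambda)=0$, then $\lambda\notin\mathcal C_P$. Moreover $\lambda+\epsilon\one\in\mathcal C_P^{(1)}$ by upward stability, and $P(\lambda+\epsilon\one)>0$ because $\frac{d}{d\epsilon}P=\sum_i\partial_iP>0$ on $\mathcal C_P^{(1)}$. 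So $\lambda+\epsilon\one\in\mathcal C_P$, and hence $\lambda\in\partial\mathcal C_P$.

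Finally, on $\partial\mathcal C_P\subset\mathcal C_P^{(1)}$ we have $\partial_iP>0$, so $\nabla P\ne0$. By the implicit function theorem, $\partial\mathcal C_P$ is then a smooth hypersurface, being the relatively open piece $\{P=0\}\cap\mathcal C_P^{(1)}$ of the zero set.

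The main obstacle is the step $\overline{\mathcal C_P}\cap\partial\mathcal C_P^{(1)}=\varnothing$. It rests entirely on the strict bound in Lemma~\ref{lem:regular-components}, and everything else is routine use of multi-affinity.
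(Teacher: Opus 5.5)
Your proof is correct and follows the paper's argument: you get (i) from \eqref{eq:7} and \eqref{eq:25}, and (ii) from the strict bound of Lemma~\ref{lem:regular-components} together with the upward perturbation $\lambda+\varepsilon\one$. Your multi-affine perturbation argument for the reverse inclusion in (i) fills in what the paper compresses into ``taking the closure''; that step does need justification, since the closure of a strict-inequality set is not automatically the corresponding non-strict set.
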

\begin{proof}
(i). Combining (\ref{eq:7}) and (\ref{eq:25}), we have that: $\mathcal{C}_{P_z}=\{\lambda: P_z(\lambda)>0,\ \partial_IP_z(\lambda)>0\ (1\le|I|\le d-1)\}$. Taking the closure of this formula, we can get (\ref{eq:174}).
(ii). Since $\overline{\mathcal{C}_{P_z}}\subset\overline{\mathcal{C}_{P_z}^{(1)}}$
and $P_z\ge0$ on $\overline{\mathcal{C}_{P_z}}$, the boundary estimate
\eqref{eq:29} gives $\overline{\mathcal{C}_{P_z}}\subset \mathcal{C}_{P_z}^{(1)}$.
At every boundary point of $\mathcal{C}_{P_z}$, we have $P_z=0$. Conversely, if
$\lambda\in \mathcal{C}_{P_z}^{(1)}$ and $P_z(\lambda)=0$, then
$\lambda+\varepsilon\one\in \mathcal{C}_{P_z}$ for every $\varepsilon>0$
by \eqref{eq:26}. This proves \eqref{eq:175}.
\end{proof}

\begin{proposition}\label{prop:radial-nesting}
Suppose Assumption~\ref{ass1}(i)--(iii) hold. For every $z$:
\begin{enumerate}
\item $\overline{ \mathcal{C}_{P_z}}\subset\Gamma_d$.
\item $t\overline{ \mathcal{C}_{P_z}}\subset s C_{P_z}$ for $0<s<t$.
\item
\begin{equation}\bigcup_{t>0}t \mathcal{C}_{P_z}=\Gamma_d,\qquad\bigcap_{t>0}t\overline{ \mathcal{C}_{P_z}}=\varnothing.\label{eq:176}\end{equation}
\end{enumerate}
\end{proposition}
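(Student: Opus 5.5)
The three parts rest on two tools: the domination theorem (Theorem~\ref{UpsilonDominance}) applied to rescaled polynomials, exactly as in the proof of Theorem~\ref{thm relative position}, and the description of the closure in Lemma~\ref{lem:component-closure}. Throughout we fix $z$ and write $p=p_z$, $P=P_z$, with root sequence $R(p)=(r_0,\dots,r_{d-1})$. By Assumption~\ref{ass1}(i),(iii) and Theorem~\ref{thm relative position},
\[
r_0>r_1\ge\cdots\ge r_{d-1}\ge0 .
\]

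\emph{Part (i).} We compare $P$ with the polarization of $x^d$, namely $S_d$, whose distinguished component is $\Gamma_d$. The polynomial $p$ has root sequence in $\K$ with all entries $\ge0$, while $x^d$ has root sequence $(0,\dots,0)$. The ordering criterion for dominance of monic right-Noetherian polynomials (the one already used to get $q_t\triangleright h_t$) applies because $r_j\ge0$ for every $j$. It gives $p\triangleright x^d$, and Theorem~\ref{UpsilonDominance} then yields $\mathcal{C}_P\subseteq\Gamma_d$.

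To pass to the closure, take $\lambda\in\overline{\mathcal{C}_P}$. Lemma~\ref{lem:component-closure} places $\lambda$ in $\mathcal{C}_P^{(1)}$, so $\partial_I P(\lambda)>0$ for all $1\le|I|\le d-1$. Since the highest-order partial derivatives are $\partial_I P=\binom nd^{-1}$ for $|I|=d$, I expect this strict positivity of all lower derivatives to force $\lambda\in\Gamma_d$. The alternative route is to run the same dominance argument for the derived polynomial $p'$ against $x^{d-1}$: this gives $\mathcal{C}^{(1)}_P\subseteq\Gamma_{d-1}$. Then $\sigma_d(\lambda)>0$ follows from $P(\lambda)\ge0$ together with the positive lower terms, though this still needs a small argument.

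\emph{Part (ii).} For $0<s<t$ we compare $q(x)=(t/s)^d p(xs/t)$, whose roots are $(t/s)r_j$, with $p$ itself. Each $(t/s)r_j\ge r_j$, with strict inequality at $j=0$ because $r_0>0$. Hence $q\triangleright p$ and $\frac ts\,\mathcal{C}_P\subseteq\mathcal{C}_P$, which gives $t\,\mathcal{C}_P\subseteq s\,\mathcal{C}_P$.

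For the closure we need a strict improvement, and the cleanest tool is the radial condition \eqref{eq:17}. The function $u\mapsto P(u\lambda)$ has positive derivative at any boundary point, and Lemma~\ref{lem:component-closure} shows that $\partial\mathcal{C}_P$ is the level set $\{P=0\}$ inside $\mathcal{C}^{(1)}_P$. So for $\lambda\in\partial\mathcal{C}_P$ the ray $u\lambda$ enters $\mathcal{C}_P$ for $u$ slightly larger than $1$. Combined with the monotone nesting just proved, this gives $u\lambda\in\mathcal{C}_P$ for all $u>1$, and the claim follows with $u=t/s$.

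\emph{Part (iii).} For the union, take $\lambda\in\Gamma_d$. Homogeneity gives
\[
P(\lambda/\epsilon)=\epsilon^{-d}\bigl(S_d(\lambda)-\epsilon a_{d-1}S_{d-1}(\lambda)-\cdots\bigr)>0,
\]
and similarly all $\partial_I P(\lambda/\epsilon)>0$ for small $\epsilon$, since $\partial_I S_d(\lambda)>0$ on $\Gamma_d$. By \eqref{eq:7} and \eqref{eq:25} this puts $\lambda/\epsilon$ in $\mathcal{C}_P$, so $\lambda\in\epsilon\,\mathcal{C}_P$. Conversely, $t\,\mathcal{C}_P\subseteq\Gamma_d$ by (i) and the fact that $\Gamma_d$ is a cone.

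For the intersection, suppose $\lambda\in t\,\overline{\mathcal{C}_P}$ for every $t>0$. Then $\lambda/t\in\overline{\mathcal{C}_P}\subseteq\Gamma_d$ for all $t$, and letting $t\to\infty$ puts $0\in\overline{\mathcal{C}_P}$. This contradicts $0\notin\Gamma_d$, since $\overline{\mathcal{C}_P}$ is closed.

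The main obstacle is the closure statement in (i), namely showing that $\partial\mathcal{C}_P$ stays strictly inside the open cone $\Gamma_d$. I would first try applying dominance to the perturbed polynomial $p-\delta$ against $x^d$ for small $\delta>0$; this would give $\overline{\mathcal{C}_P}\subseteq\mathcal{C}_{P+\delta'}\subseteq\Gamma_d$ when $r_{d-1}>0$. The boundary case $r_{d-1}=0$ has to be handled separately, and for it I would use the derived-component inclusion into $\Gamma_{d-1}$ combined with $P\ge0$.
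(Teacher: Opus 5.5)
You get $\mathcal{C}_P\subseteq\Gamma_d$ from $p\triangleright x^d$, and the nesting $\frac ts\mathcal{C}_P\subseteq\mathcal{C}_P$ from dominance of the rescaled polynomial, as in Theorem~\ref{thm relative position}. Both are correct and differ from the paper. The paper gets outward invariance from a first-exit argument with \eqref{eq:17}, and then $\mathcal{C}_P\subset\Gamma_d$ from the limits $s^{|I|-d}\partial_IP(s\lambda)\to\partial_IS_d(\lambda)$.

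The genuine gap is the closure statement $\overline{\mathcal{C}_P}\subset\Gamma_d$ in (i). You leave it open, yet your proof of $\bigcap_{t>0}t\overline{\mathcal{C}_P}=\varnothing$ depends on it. Your first two suggested routes fail as stated:
\begin{enumerate}
\item Positivity of $\partial_IP$ for $1\le|I|\le d-1$ cannot by itself force $\lambda\in\Gamma_d$. For $d=2$, $n=3$, $p=x^2-a_0$ with $a_0>0$, one has $\mathcal{C}_P^{(1)}=\{\lambda_i+\lambda_j>0,\ i\ne j\}\ni(1,1,-0.9)$, where $\sigma_2=-0.8$.
\item The appeal to ``positive lower terms'' is unjustified, since only $a_{d-1}\ge0$ is assumed. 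For example, $x^2-a_1x-a_0$ with $a_1>0$ and $-a_1^2/4<a_0<0$ satisfies Assumption~\ref{ass1}.
\end{enumerate}
Your perturbation idea does work, but with $p+\delta$ rather than $p-\delta$, and without needing $r_{d-1}>0$. For $0<\delta<-p(r_1)$ (possible by Lemma~\ref{lem:regular-components}), only the top root moves, into $(r_1,r_0)$. So every root of $p+\delta$ stays $\ge0$, giving $p+\delta\triangleright x^d$. Together with \eqref{eq:175} this yields $\overline{\mathcal{C}_P}\subset\mathcal{C}^{(1)}_P\cap\{P>-\delta\}=\mathcal{C}_{P+\delta}\subseteq\Gamma_d$.

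The shortest repair uses your own part (ii), whose proof never invokes the closure in (i). Taking $t=1$, $s=\frac12$ gives $\overline{\mathcal{C}_P}\subset\frac12\mathcal{C}_P\subset\frac12\Gamma_d=\Gamma_d$. After that, your intersection argument goes through.

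For comparison, the paper proves the closure statement by a supporting-hyperplane argument. A normal $\nu$ to $\Gamma_d$ at $x\in\overline{\mathcal{C}_P}\cap\partial\Gamma_d$ with $\nu\cdot x=0$ also supports $\mathcal{C}_P$, so smoothness of $\partial\mathcal{C}_P$ forces $\nu\parallel\nabla P(x)$, contradicting \eqref{eq:17}. It proves the empty intersection via $S_1>r_0$ on $\mathcal{C}_P$.
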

\begin{proof}
Fix $z$ and write $P=P_z$.

\emph{Step 1: outward invariance.} If the ray through $\lambda\in C_P$ first exits at $s_*\lambda$, $s_*>1$, then
\[P(s\lambda)>0\ (1\le s<s_*),\qquad P(s_*\lambda)=0,\qquad s_*\lambda\cdot\nabla P(s_*\lambda)\le0,\]
contradicting Theorem \ref{thm relative position}. Hence
\begin{equation}s \mathcal{C}_P\subset \mathcal{C}_P\qquad(s\ge1).\label{eq:177}\end{equation}
For $\lambda\in \mathcal{C}_P$ and $0\le j\le d-1$,
\[\partial_IS_d(\lambda)=\lim_{s\to\infty}s^{|I|-d}\partial_IP(s\lambda)\ge0,\qquad\binom djS_{d-j}(\lambda)=\sum_{|I|=j}\partial_IS_d(\lambda)\ge0.\]
Thus $ \mathcal{C}_P\subset\overline{\Gamma_d}$; openness gives $ \mathcal{C}_P\subset\Gamma_d$. If $x\in\overline{ \mathcal{C}_P}\cap\partial\Gamma_d$, a supporting normal $\nu\ne0$ for the convex cone $\Gamma_d$ satisfies $\nu\cdot x=0$. It supports $ \mathcal{C}_P$ at $x$ as well. The smoothness in Lemma~\ref{lem:component-closure} implies that $\nu$ is parallel to $\nabla P(x)$, contradicting \eqref{eq:17}. Therefore
\begin{equation}\overline{ \mathcal{C}_P}\subset\Gamma_d.\label{eq:178}\end{equation}
\emph{Step 2: strict nesting.} For $x\in\partial \mathcal{C}_P$,
\[\left.\frac d{ds}\right|_{s=1}P(sx)=x\cdot\nabla P(x)>0.\]
Consequently $(1+\varepsilon)x\in \mathcal{C}_P$ for small $\varepsilon>0$. Together with \eqref{eq:177}, this gives $s\overline{ \mathcal{C}_P}\subset \mathcal{C}_P$ for $s>1$, proving (ii).

\emph{Step 3: union and intersection.} If $\lambda\in\Gamma_d$, all nonzero mixed derivatives of $S_d$ are positive at $\lambda$. Since
\[R^{|I|-d}\partial_IP(R\lambda)\longrightarrow\partial_IS_d(\lambda)>0\qquad(0\le|I|\le d-1),\]
Proposition~\ref{prop:derived-component} gives $R\lambda\in \mathcal{C}_P$ for large $R$. This proves the union in (iii). At $r_0\one\in\partial \mathcal{C}_P$, the Euler condition gives $r_0p'(r_0)>0$, hence $r_0>0$. Averaging coordinate permutations in the convex component gives $S_1(\lambda)>r_0$ on $ \mathcal{C}_P$. Thus
\[t\overline{ \mathcal{C}_P}\subset\{S_1\ge tr_0\},\qquad\bigcap_{t>0}t\overline{ \mathcal{C}_P}=\varnothing.\]
\end{proof}

\begin{lemma}\label{lem:radial-scale}
Under Assumption~\ref{ass1}(i)--(iii), the scale $\kappa_z(\lambda)$ in \eqref{e radial scale} exists and is unique.
\begin{enumerate}
\item $\kappa$ is smooth and positive on $X\times\Gamma_d$.
\item Each $\kappa_z$ is homogeneous of degree one, strictly increasing in each coordinate, and concave.
\item For $t>0$,
\begin{equation}\{\kappa_z>t\}=t \mathcal{C}_{P_z},\qquad\{\kappa_z\ge t\}=t\overline{ \mathcal{C}_{P_z}}.\label{eq:179}\end{equation}
\end{enumerate}
\end{lemma}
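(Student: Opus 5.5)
Fix $z$ and $\lambda\in\Gamma_d$. I will study the ray function $f(s)=P_z(s^{-1}\lambda)$, or equivalently the set of $t>0$ with $\lambda\in t\,\mathcal C_{P_z}$. Proposition~\ref{prop:radial-nesting}(ii) gives $t\overline{\mathcal C_{P_z}}\subset s\,\mathcal C_{P_z}$ for $s<t$. Hence $T_\lambda=\{t>0:\lambda\in t\,\mathcal C_{P_z}\}$ is an interval of the form $(0,t_*)$ or $(0,t_*]$. By \eqref{eq:176} it is nonempty and bounded. Set $\kappa_z(\lambda):=t_*=\sup T_\lambda\in(0,\infty)$. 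Openness of $\mathcal C_{P_z}$ shows $t_*\notin T_\lambda$. Moreover $t\mapsto t^{-1}\lambda$ meets $\mathcal C_{P_z}$ for $t<t_*$ and leaves it at $t_*$, so $t_*^{-1}\lambda\in\partial\mathcal C_{P_z}$; this gives existence. For uniqueness, suppose $t^{-1}\lambda\in\partial\mathcal C_{P_z}$ for two values $t_1<t_2$. Then $\lambda\in t_2\overline{\mathcal C_{P_z}}\subset t_1\mathcal C_{P_z}$, which contradicts $t_1^{-1}\lambda\in\partial\mathcal C_{P_z}$ since the component is open. The same nesting gives (iii) directly: $\{\kappa_z>t\}=t\,\mathcal C_{P_z}$ and $\{\kappa_z\ge t\}=t\overline{\mathcal C_{P_z}}$.

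For smoothness I use the implicit function theorem on $F(z,\lambda,\kappa)=P_z(\kappa^{-1}\lambda)$. Its $\kappa$-derivative is $-\kappa^{-2}\lambda\cdot\nabla P_z(\kappa^{-1}\lambda)=-\kappa^{-1}\,\mu\cdot\nabla P_z(\mu)$ with $\mu=\kappa^{-1}\lambda\in\partial\mathcal C_{P_z}$. This is nonzero by the radial condition \eqref{eq:17}, which holds under Assumption~\ref{ass1} by Theorem~\ref{thm relative position}. The coefficients $a_k(z)$ are smooth, so $F$ is smooth. The main subtlety is to ensure that the locally defined implicit solution coincides with $\kappa$ and not with some other zero of $P_z$ on the ray. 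For this I use $\partial\mathcal C_{P_z}=\{\mu\in\mathcal C^{(1)}_{P_z}:P_z(\mu)=0\}$ from Lemma~\ref{lem:component-closure}. Since $\mathcal C^{(1)}_{P_z}$ is defined by strict inequalities $\partial_IP_z>0$ that depend continuously on $z$, nearby zeros stay in $\mathcal C^{(1)}_{P_{z'}}$ and hence lie on $\partial\mathcal C_{P_{z'}}$. Uniqueness then identifies the implicit solution with $\kappa$. Smoothness in $\lambda$ and $z$ follows, and positivity is built in.

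For (ii), homogeneity is immediate from the definition. Monotonicity follows from $\mathcal C_{P_z}+\overline{\Gamma_n}\subset\mathcal C_{P_z}$: if $\lambda'\ge\lambda$ then $T_\lambda\subset T_{\lambda'}$. Strictness comes from the derivative formula $\partial_i\kappa_z=\partial_iP_z(\mu)/(\mu\cdot\nabla P_z(\mu))>0$, using $\partial_iP_z>0$ on $\partial\mathcal C_{P_z}$ from Lemma~\ref{lem:component-closure}. For concavity, the superlevel sets $\{\kappa_z\ge t\}=t\overline{\mathcal C_{P_z}}$ are convex by Proposition~\ref{prop:derived-component}, so $\kappa_z$ is quasiconcave. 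A positive, degree-one homogeneous, quasiconcave function on a convex cone is concave. To see this, given $\lambda_1,\lambda_2$, rescale them to $\lambda_i/\kappa_z(\lambda_i)\in\overline{\mathcal C_{P_z}}$. Convex combinations with weights proportional to $\kappa_z(\lambda_i)$ stay in $\overline{\mathcal C_{P_z}}$, which yields $\kappa_z(\lambda_1+\lambda_2)\ge\kappa_z(\lambda_1)+\kappa_z(\lambda_2)$. I expect the smoothness step, and specifically the identification of nearby zeros with $\partial\mathcal C_{P_{z'}}$, to be the main point requiring care.
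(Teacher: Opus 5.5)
Your proposal is correct and follows the paper's proof essentially step for step. The nesting from Proposition~\ref{prop:radial-nesting} gives existence, uniqueness, homogeneity and \eqref{eq:179}, and the implicit function theorem with the radial condition \eqref{eq:17} gives smoothness and $\partial_i\kappa_z=\partial_iP_z(x)/(x\cdot\nabla P_z(x))>0$. Concavity comes from rescaling to $\overline{\mathcal C_{P_z}}$ and using its convexity. The paper leaves one step implicit that you supply: you identify the local implicit solution with $\kappa$, using $\partial\mathcal C_{P_z}=\{P_z=0\}\cap\mathcal C^{(1)}_{P_z}$ and the persistence of the strict inequalities $\partial_IP_z>0$ under perturbation of $(z,\lambda)$.
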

\begin{proof}
\emph{Step 1: existence and differentiation.} Proposition~\ref{prop:radial-nesting} gives, for $\lambda\in\Gamma_d$,
\[\{t>0:\lambda\in t \mathcal{C}_{P_z}\}=(0,b),\qquad0<b<\infty,\qquad\lambda/b\in\partial \mathcal{C}_{P_z}.\]
Strict nesting makes $b$ unique. This defines $\kappa_z(\lambda)=b$ and proves \eqref{eq:179} and homogeneity. For $x=\lambda/b$,
\[\partial_bP_z(\lambda/b)=-\frac{x\cdot\nabla P_z(x)}b<0,\qquad\partial_i\kappa_z(\lambda)=\frac{\partial_iP_z(x)}{x\cdot\nabla P_z(x)}>0.\]
The implicit-function theorem gives joint smoothness.

\emph{Step 2: concavity.} Let $a=\kappa_z(\lambda)$, $b=\kappa_z(\mu)$, $0\le\theta\le1$, and $q=\theta a+(1-\theta)b$. Convexity gives
\[\frac{\theta\lambda+(1-\theta)\mu}q=\frac{\theta a}q\frac\lambda a+\frac{(1-\theta)b}q\frac\mu b\in\overline{ C_{P_z}}.\]
By \eqref{eq:179}, $\kappa_z(\theta\lambda+(1-\theta)\mu)\ge q$.
\end{proof}

\begin{lemma}\label{res:7-4}
If $\SH_d(X,\omega_0)\ne\varnothing$, then $0<\sigma<\infty$ and
\begin{equation}\sup_{v\in\SH_d(X,\omega_0)}\min_X\kappa_z(\Lambda_v)\le\sigma,\label{eq:180}\end{equation}
and for any $v_0 \in \SH_d(X,\omega_0)$, we have that $0<\min_X\kappa_z(\Lambda_{v_0})\le\sigma\le\max_X\kappa_z(\Lambda_{v_0})<\infty$.
\end{lemma}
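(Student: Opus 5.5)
The plan is to get the bounds for a single admissible function from compactness and continuity of $\kappa$ (Lemma~\ref{lem:radial-scale}), and then to deduce the sup--inf inequality \eqref{eq:180} by comparing two admissible functions through the maximum principle.

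\emph{Step 1: one admissible function.} Fix $v_0\in\SH_d(X,\omega_0)$. Then $z\mapsto\Lambda_{v_0}(z)$ is a continuous map into $\Gamma_d$; the eigenvalue vector is taken unordered, or sorted, and $\kappa_z$ is symmetric. By Lemma~\ref{lem:radial-scale}, $\kappa$ is continuous and positive on $X\times\Gamma_d$. So $z\mapsto\kappa_z(\Lambda_{v_0}(z))$ is a positive continuous function on the compact manifold $X$. Hence
\[
0<\min_X\kappa_z(\Lambda_{v_0})\le\max_X\kappa_z(\Lambda_{v_0})<\infty .
\]
From the definition \eqref{eq:20} we immediately get $\sigma\le\max_X\kappa_z(\Lambda_{v_0})<\infty$.

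\emph{Step 2: the key comparison.} The claim is that for any $u,v\in\SH_d(X,\omega_0)$,
\[
\min_X\kappa_z(\Lambda_v)\le\max_X\kappa_z(\Lambda_u).
\]
The function $u-v$ is smooth on the compact manifold $X$, so it attains its minimum at some point $z_0$. There $\ddbar(u-v)(z_0)\ge0$, so $\omega_u(z_0)\ge\omega_v(z_0)$ as Hermitian forms. Weyl monotonicity then gives $\lambda_i^\downarrow(\Lambda_u(z_0))\ge\lambda_i^\downarrow(\Lambda_v(z_0))$ for every $i$. Since $\kappa_{z_0}$ is symmetric and increasing in each coordinate (Lemma~\ref{lem:radial-scale}(ii)), we obtain
\[
\kappa_{z_0}(\Lambda_u(z_0))\ge\kappa_{z_0}(\Lambda_v(z_0))\ge\min_X\kappa_z(\Lambda_v).
\]
The left side is at most $\max_X\kappa_z(\Lambda_u)$, which proves the claim.

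\emph{Step 3: conclusion.} Take the infimum over $u$ in Step 2 to get $\min_X\kappa_z(\Lambda_v)\le\sigma$ for every $v$; taking the supremum over $v$ then gives \eqref{eq:180}. Applying Step 2 with $v=v_0$ gives $\sigma\ge\min_X\kappa_z(\Lambda_{v_0})>0$ by Step 1. Together these yield the full chain $0<\min_X\kappa_z(\Lambda_{v_0})\le\sigma\le\max_X\kappa_z(\Lambda_{v_0})<\infty$.

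The only delicate point is the comparison in Step 2. Monotonicity of $\kappa_z$ is stated for vectors compared coordinatewise, while the Hessian comparison yields only a matrix inequality. Weyl's inequality on ordered eigenvalues, together with symmetry of $\kappa_z$ (inherited from the symmetry of $\mathcal{C}_{P_z}$ in Proposition~\ref{prop:derived-component}), bridges that gap. Since $\chi$ is a positive Hermitian metric, the eigenvalues relative to $\chi$ also preserve this ordering.
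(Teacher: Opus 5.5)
Your proof is correct and follows the paper's argument. The paper likewise compares two admissible functions at an extremum of their difference (a maximum of $\varphi-v$ instead of your minimum of $u-v$), uses eigenvalue monotonicity and monotonicity of $\kappa_z$ to get $\min_X\kappa_z(\Lambda_\varphi)\le\max_X\kappa_z(\Lambda_v)$, and then takes the supremum and infimum. Your Step~1 only makes explicit the continuity-and-compactness reasoning behind $0<\min_X\kappa_z(\Lambda_{v_0})$ and $\max_X\kappa_z(\Lambda_{v_0})<\infty$, which the paper leaves implicit.
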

\begin{proof}
For $\varphi,v\in\SH_d(X,\omega_0)$ and $x\in\operatorname{argmax}_X(\varphi-v)$, eigenvalue monotonicity gives
\[\omega_\varphi(x)\le\omega_v(x),\]
Combining this with Lemma \ref{lem:radial-scale}, we can get 
$$\min_X\kappa_z(\Lambda_\varphi)\le\kappa_x(\Lambda_\varphi(x))\le\kappa_x(\Lambda_v(x))\le\max_X\kappa_z(\Lambda_v).$$
Take the supremum in $\varphi$ and infimum in $v$. A fixed $v_0$ gives $0<\min_X\kappa_z(\Lambda_{v_0})\le\sigma\le\max_X\kappa_z(\Lambda_{v_0})<\infty$.
\end{proof}

\section{Joint concavity}\label{sec:joint-concavity}

We first recall the following Fang-Ma's fixed-parameter facts about concavity. Convexity and concavity when the
parameters vary are established separately below.

\begin{lemma}\label{lem:univariate}
For ordered $a_0\ge\cdots\ge a_{e-1}$, the function
$\frp_e^{1/e}$ is jointly concave on $y>a_0$. If $e\ge2$, then
\begin{equation}\label{eq:phi}
 \Phi_e(y;a_0,\ldots,a_{e-1})
       =\frac{\frp_e(y;a_0,\ldots,a_{e-1})}
              {\partial_y\frp_e(y;a_0,\ldots,a_{e-1})}
\end{equation}
is jointly concave and nonincreasing with respect to $a_i$ for each $i$ on $y>a_1$.
For $e=1$, $\Phi_1(y;a_0)=y-a_0$ is affine on all of $\R^2$.
\end{lemma}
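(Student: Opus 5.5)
The plan is an induction on $e$ built on the recursion $\frp_e(y;a_0,\ldots,a_{e-1})=\int_{a_0}^{y}\frp_{e-1}(t;a_1,\ldots,a_{e-1})\,dt$. For $e=1$, $\frp_1=y-a_0$ is affine, so both claims are immediate. For general $e$ I would first translate the two claims into the language of the earlier results. Up to the factor $e!$, $\frp_e(\cdot;\mathbf a)$ is the monic right-Noetherian polynomial with root sequence $\mathbf a$. By Lemma~\ref{lem neg ps} and the remark after \eqref{eq:rootderivatives}, on $y>a_0$ all derivatives $\partial_y^j\frp_e$ with $j<e$ are positive. Likewise, $\partial_y\frp_e=\frp_{e-1}(y;a_1,\ldots)>0$ on $y>a_1$.

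For the joint concavity of $\frp_e^{1/e}$ on $\{y>a_0\}$, I would not differentiate. Instead I would realize $\frp_e(y;\mathbf a)$ as the value of an ideal G\aa rding polynomial at an affine image of $(y,a_0,\ldots,a_{e-1})$. Concretely, the iterated integral equals the volume of the simplex-type region
\[
\{a_{e-1}\le t_e\le t_{e-1},\ \ldots,\ a_0\le t_1\le y\}.
\]
After the substitution $u_j=t_j-a_j$, this volume is $\frac1{e!}$ times a polynomial that is a polarization-type expression in the gaps $y-a_0,\,a_0-a_1,\ldots$. Alternatively, one can use the variables $y-a_j$ directly, which are affine in $(y,\mathbf a)$ with nonnegative coefficients on the region.

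Theorem~\ref{thm pullback ide} together with Theorem~\ref{thm ideal pola} then shows that the pullback is ideal G\aa rding. Its distinguished component is convex, and I would invoke Fang--Ma's quotient concavity for ideal G\aa rding polynomials of degree $e$: $f^{1/e}$ is concave on $\mathcal C_f$. This gives the first claim, provided one checks that $\{y>a_0\}$ (with the ordering) lies in the distinguished component. That check follows because all $\partial_y^j$ are positive there and PRT holds.

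For $\Phi_e=\frp_e/\partial_y\frp_e$ with $e\ge2$ on $y>a_1$, I would use the same Fang--Ma quotient concavity: for ideal G\aa rding $f$ of degree $e$, the quotient $f/\partial_v f$ is concave on $\mathcal C_{\partial_v f}$, where $v$ is a positive direction. I would apply it with the direction $v$ equal to $\partial_y$ in the lifted variables. Using $\partial_y$ rather than the full $D_{\one}$ matters here, because $\Phi_e$ is invariant under $D_{\one}$ translation of $(y,\mathbf a)$. The domain $\{y>a_1\}$ is exactly where $\partial_y\frp_e>0$, which is the derived component. If quotient concavity is only available in the form ``$f/\partial_v f$ is concave where $\partial_v f>0$ in the component'', this matches.

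Monotonicity in $a_i$ I would get directly. $\partial_{a_0}\frp_e=-\frp_{e-1}(a_0;a_1,\ldots)\le0$ because $a_0\ge a_1$, while $\partial_{a_0}\partial_y\frp_e=0$. For $i\ge1$ I would use induction: writing $\Phi_e^{-1}=\partial_y\log\frp_e$, I would show that $\log\frp_e$ has $y$-derivative nondecreasing in $a_i$. This is a Cauchy mean value or ratio comparison, since $\frp_e$ is the integral of $\frp_{e-1}$ and increasing $a_i$ shifts mass.

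I expect the main obstacle to be identifying the affine pullback precisely and verifying that its image lands in the ideal G\aa rding component. That verification is what makes the Fang--Ma concavity applicable jointly in the parameters. If it fails, my fallback is a direct induction: concavity of $\Phi_e$ follows from that of $\Phi_{e-1}$ via the Riccati identity $\partial_y\Phi_e=1-\Phi_e/\Phi_{e-1}$ and a Hessian computation.
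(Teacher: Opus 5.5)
\textbf{There is a genuine gap.} Your plan rests on one step: after passing to gap variables, $\frp_e$ is a polarization-type expression. Theorems~\ref{thm ideal pola} and~\ref{thm pullback ide} would then make it an ideal G\aa rding polynomial, to which $f^{1/e}$- and $f/\partial_vf$-concavity apply. This step is false once $e\ge3$.

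It does hold for $e=2$: with $g_0=y-a_0$ and $g_1=a_0-a_1$ one has $2\frp_2=g_0(g_0+2g_1)=\sigma_2(g_0,g_0+2g_1)$. For $e=3$, write $Y=y-a_2$ and $A_i=a_i-a_2$. Then
\[
\frp_3(y;a_0,a_1,a_2)=\tfrac16\,(Y-A_0)\bigl(Y^2+YA_0+A_0^2-3A_1^2\bigr).
\]
The quadratic factor is positive on the open chamber, since it is at least $(Y-A_0)(Y+2A_0)$ there. But it has signature $(2,1)$, so $\frp_3$ is not hyperbolic with respect to any chamber direction.

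Now suppose $\frp_3(x)=\Pol_m(p)(Bx+b)$ with $x=(y,\mathbf a)$, where $B$ is the gap map followed by a nonnegative matrix without zero rows. Then $B$ sends chamber directions into $\Gamma_m^+$. So the top-degree part $c_DS_D(Bx)$, with $D=\deg p$, is not identically zero. Homogeneity of $\frp_3$ then forces $D=3$ and $\frp_3=c_3S_3(Bx)$, which is hyperbolic in those directions. This is a contradiction, so the representation you need does not exist. As a side point, the gap map has negative entries, so Theorem~\ref{thm pullback ide} is not the right tool for that substitution; only affine invariance of concavity is available there.

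The same example shows that the ordering $a_0\ge a_1\ge\cdots$ does work that a component or sign argument cannot detect. On the slice $a_1=a_2=0$ one has $\frp_3=(y^3-a_0^3)/6$. Points with $a_0<0<y$ lie in the same component of $\{\frp_3>0\}$ as the chamber, and there $\partial_y\frp_3=y^2/2>0$ and $\partial_y^2\frp_3=y>0$. Yet $\frp_3^{1/3}=6^{-1/3}y\,\psi(a_0/y)$ with $\psi(s)=(1-s^3)^{1/3}$ and $\psi''(s)=-2s(1-s^3)^{-5/3}>0$ for $s<0$. So concavity fails at these points, even though all the positivity conditions your domain check uses hold.

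You also cannot borrow the paper's later results. The coordinate-direction quotient concavity (Lemma~\ref{lem:quotient}) and the polarization concavity (Proposition~\ref{prop concave pol}) are deduced from the present lemma, so feeding them back would be circular. The paper itself just quotes the statement from \cite[Corollary~5.3 and Theorems~6.6--6.7]{FM26b}.

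Your fallback does not close the gap. The identity $\partial_y\Phi_e=1-\Phi_e/\Phi_{e-1}(y;a_1,\ldots,a_{e-1})$ is correct, but it involves only $y$-derivatives. The mixed and $a$-direction second derivatives are the entire difficulty, and the claim about $\frp_e^{1/e}$ is not addressed at all.

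Your monotonicity argument in $a_0$ is fine. For $i\ge1$ the log-derivative route needs care, because $\frp_e<0$ on $a_1<y<a_0$ and vanishes at $y=a_0$. It is cleaner to treat $\partial_{a_i}\Phi_e$ directly.
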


These are the results in \cite[Corollary 5.3 and Theorems 6.6--6.7]{FM26b}. Notice that $\Phi_e$ can be negative: its concavity domain
is determined by the derivative, not by positivity of the numerator.

\begin{lemma}\label{lem:mean}
For $0<\theta<1$, $h(u,v)=u^\theta v^{1-\theta}$ is increasing in each
argument and concave on $(0,\infty)^2$. Consequently, if $U,V$ are
positive concave functions on a convex domain, then
$U^\theta V^{1-\theta}$ is concave there.
\end{lemma}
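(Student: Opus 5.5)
The plan is to prove the two claims of Lemma~\ref{lem:mean} separately: first the concavity and monotonicity of $h$ by a direct Hessian computation, then the composition statement by combining concavity of $U,V$ with the monotonicity of $h$.

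\emph{Monotonicity and concavity of $h$.} On $(0,\infty)^2$ the partial derivatives
\[
h_u=\theta u^{\theta-1}v^{1-\theta},\qquad h_v=(1-\theta)u^\theta v^{-\theta}
\]
are both positive, so $h$ is strictly increasing in each argument. The second derivatives are
\[
h_{uu}=\theta(\theta-1)u^{\theta-2}v^{1-\theta}\le0,\qquad h_{vv}=-\theta(1-\theta)u^\theta v^{-\theta-1}\le0,\qquad h_{uv}=\theta(1-\theta)u^{\theta-1}v^{-\theta}.
\]
A direct computation gives
\[
h_{uu}h_{vv}-h_{uv}^2=\theta^2(1-\theta)^2u^{2\theta-2}v^{-2\theta}-\theta^2(1-\theta)^2u^{2\theta-2}v^{-2\theta}=0.
\]
So the Hessian has nonpositive trace and zero determinant, hence is negative semidefinite, and $h$ is concave.

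An alternative route avoids the Hessian. Since $h$ is homogeneous of degree one, it suffices to note that $t\mapsto t^\theta$ is concave, because $h(u,v)=v\,(u/v)^\theta$ is the perspective of that function. The perspective of a concave function is concave.

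\emph{Composition.} Let $x,y$ lie in the convex domain and $s\in[0,1]$. Concavity of $U$ and $V$ gives $U(sx+(1-s)y)\ge sU(x)+(1-s)U(y)$, and likewise for $V$. Both right-hand sides are positive, so all the points involved lie in $(0,\infty)^2$. Monotonicity of $h$ then gives
\[
h\bigl(U(sx+(1-s)y),V(sx+(1-s)y)\bigr)\ge h\bigl(sU(x)+(1-s)U(y),\,sV(x)+(1-s)V(y)\bigr).
\]
Concavity of $h$ bounds the right-hand side below by $s\,h(U(x),V(x))+(1-s)\,h(U(y),V(y))$. This is exactly the concavity of $U^\theta V^{1-\theta}$.

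I do not expect a real obstacle here. The only point needing care is that monotonicity is used in the composition step before concavity of $h$ is applied. That ordering is legitimate because positivity of $U$ and $V$ keeps every argument of $h$ inside $(0,\infty)^2$.
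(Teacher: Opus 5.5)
Your proof is correct and follows essentially the same route as the paper. The paper also checks that the first derivatives are positive and that the Hessian is negative semidefinite, writing its quadratic form as the perfect square $-\theta(1-\theta)h(u,v)(\alpha/u-\beta/v)^2$ rather than using trace and determinant, and it then handles the composition exactly as you do, applying monotonicity of $h$ before concavity of $h$.
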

\begin{proof}
The first derivatives of $h$ are positive, and
\[
 D^2h(u,v)[(\alpha,\beta),(\alpha,\beta)]
 =-\theta(1-\theta)h(u,v)
       \left(\frac{\alpha}{u}-\frac{\beta}{v}\right)^2\le0.
\]
For the composition assertion, first use the concavity of $U,V$ and
monotonicity of $h$, and then the concavity of $h$.
\end{proof}

\begin{lemma}\label{lem:quotient}
Let $R(\tau,z,y)$ be a polynomial with respect to $(z,y)$ for each fixed $\tau\in K$ for a convex
parameter set $K$ and $(z,y)\in\R^m\times\R$. Denote $R_{\tau}(z,y)\triangleq R(\tau, z, y)$. Here $R_{\tau}$ is viewed as an element of $\mathbb{R}[z_1,\ldots,z_m][y].$ Suppose that
\begin{enumerate}
\item There exists $\ell\ge 1$ such that $\deg_yR_\tau=\ell$ for every $\tau$. 
\item every fixed-parameter polynomial $R_\tau$ is ideal G\aa rding;
\item $\mathcal{C}_{\partial_y^kR}\triangleq \bigcup_{\tau \in K}(\{\tau\}\times \CC_{\partial_y^k R_{\tau}})$ is convex for every $0\le k\le\ell$.
\end{enumerate}
Then $R/\partial_yR$ is jointly concave on $\CC_{\partial_yR}$.
\end{lemma}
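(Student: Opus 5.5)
The plan is to reduce the joint concavity of $R/\partial_yR$ to the univariate statement in Lemma~\ref{lem:univariate}. I would restrict $R_\tau$ to vertical lines $\{z\}\times\R$ and read off the coefficients of the restricted polynomial as largest roots. Then I would show that these roots are jointly convex functions of $(\tau,z)$, and finish by composing with the monotone, jointly concave function $\Phi_\ell$.

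\emph{Step 1: restriction to $y$-lines.} Fix $(\tau,z)$ with $\{z\}\times\R$ meeting $\CC_{\partial_yR_\tau}$, and let $h_{\tau,z}(y)=R_\tau(z,y)$.
\begin{itemize}
\item By (i), $\deg h_{\tau,z}=\ell$ for all such $(\tau,z)$. I will check that the leading coefficient $c(\tau,z)$, which is $\partial_y^\ell R_\tau/\ell!$, is positive. This should follow from PRT of $\CC_{\partial_y^{\ell}R_\tau}$ together with the convention $\CC_h=\R^n$ for positive constants.
\item For $0\le k\le \ell-1$ I define
\[
 r_k(\tau,z)=\inf\{y:(z,y)\in \CC_{\partial_y^kR_\tau}\}.
\]
\item Since $R_\tau$ is ideal G\aa rding by (ii), each $\CC_{\partial_y^kR_\tau}$ is convex and satisfies PRT. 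Hence its intersection with the line is a half-line $(r_k,\infty)$.
\item On that half-line $\partial_y^kh_{\tau,z}>0$, and $\partial_y^kh_{\tau,z}(r_k)=0$. So $r_k$ is the largest real root of $h_{\tau,z}^{(k)}$.
\item The nesting $\CC_{\partial_y^kR_\tau}\subset\CC_{\partial_y^{k+1}R_\tau}$ gives $r_0\ge r_1\ge\cdots\ge r_{\ell-1}$. Thus $h_{\tau,z}/c$ is monic right-Noetherian.
\end{itemize}
By successive integration (the root-parameter representation \eqref{eq:family}), $h_{\tau,z}=c\,\ell!\,\frp_\ell(y;r_0,\ldots,r_{\ell-1})$. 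Therefore
\[
 \frac{R}{\partial_yR}(\tau,z,y)=\Phi_\ell\bigl(y;r_0(\tau,z),\ldots,r_{\ell-1}(\tau,z)\bigr),
\]
and the constant $c$ cancels. Moreover, $(\tau,z,y)\in\CC_{\partial_yR}$ exactly when $y>r_1(\tau,z)$, which is the domain on which Lemma~\ref{lem:univariate} gives concavity of $\Phi_\ell$. When $\ell=1$ the quotient is $y-r_0$, and only Step 2 for $k=0$ is needed.

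\emph{Step 2: convexity of the roots.} For each $k$, the strict epigraph $\{(\tau,z,y):y>r_k(\tau,z)\}$ coincides with $\CC_{\partial_y^kR}$ over the set of $(\tau,z)$ where $r_k$ is finite. By (iii) this set is convex, so $r_k$ is a jointly convex function of $(\tau,z)$ on the convex projection of $\CC_{\partial_yR}$. Here I would check two things: that the projections for different $k$ are compatible via the nesting of components, and that $r_k>-\infty$ on them. The latter holds because, by PRT and degree $\ell-k\ge1$, $\partial_y^kR_\tau(z,y)$ has a real root for fixed $z$.

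\emph{Step 3: composition.} Take two points of $\CC_{\partial_yR}$ and their convex combination, which lies in $\CC_{\partial_yR}$ by (iii).
\begin{itemize}
\item By convexity of each $r_k$, the combined root vector is dominated entrywise by the convex combination of the root vectors.
\item Since $\Phi_\ell$ is nonincreasing in each root variable, replacing the roots by this larger combination can only lower its value.
\item Joint concavity of $\Phi_\ell$ in $(y;a_0,\ldots,a_{\ell-1})$, together with $y$ entering affinely, then gives the concavity inequality.
\end{itemize}
One point needs care: the intermediate root vectors must remain ordered with $y>a_1$, so that they stay inside the concavity domain of Lemma~\ref{lem:univariate}. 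Ordering holds because a convex combination of ordered vectors is ordered. The condition $y>a_1$ holds because $y>r_1$ at both endpoints.

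I expect the main obstacle to be Step 1: identifying the line restriction exactly with the root-parameter family, and the line section of $\CC_{\partial_y^kR_\tau}$ exactly with $(r_k,\infty)$. This needs the uniqueness of the PRT component in the definition of G\aa rding polynomials. It also needs the fact that a connected component of $\{\partial_y^kR_\tau>0\}$ cannot contain a $y$-interval beyond the largest root that is separated from the PRT part. I would settle this first via convexity plus PRT of the components.
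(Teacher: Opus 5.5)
Your route matches the paper's Steps 2 and 3. You restrict to vertical lines, read the largest derivative roots off the fibers of the total components, and get their convexity from those components viewed as epigraphs. You then compose with the monotone, jointly concave $\Phi_\ell$. However, you skip the one step of the paper's proof that is not a line-by-line statement: \emph{every vertical line meeting $\mathcal{C}_{\partial_yR_\tau}$ also meets $\mathcal{C}_{R_\tau}$}.

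You fix $(\tau,z)$ with $\{z\}\times\R$ meeting $\mathcal{C}_{\partial_yR_\tau}$ and assert that its intersection with $\mathcal{C}_{R_\tau}$ is a half-line $(r_0,\infty)$. Convexity and PRT only show that this intersection is either empty or such a half-line. Nothing in your argument excludes $r_0(\tau,z)=+\infty$. The ``compatibility of projections via nesting'' in your Step 2 does not help. The inclusion $\mathcal{C}_{R_\tau}\subset\mathcal{C}_{\partial_yR_\tau}$ shows that the projection of $\mathcal{C}_R$ lies in that of $\mathcal{C}_{\partial_yR}$, which is the wrong direction.

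Consider a point of $\mathcal{C}_{\partial_yR}$ whose line misses $\mathcal{C}_{R_\tau}$. There you do not know that $h_{\tau,z}$ has a real root in $[r_1,\infty)$, so it is not shown to be right-Noetherian with root sequence $(r_0,\ldots,r_{\ell-1})$. The formula $R/\partial_yR=\Phi_\ell(y;r_0,\ldots,r_{\ell-1})$ is then unavailable. Your epigraph argument also gives convexity of $r_0$ only on the projection of $\mathcal{C}_R$, which may be a proper subset of the domain you need.

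The paper closes this with a cross-line argument, its Step 1. Given $(\tau,z)$ in the projection $D$ of $\mathcal{C}_{\partial_yR}$, pick $(z_*,y_*)\in\mathcal{C}_{R_\tau}$ and join $z_*$ to $z$ by the segment $z_t$. Since $(\tau,z_*)\in D$ and $D$ is convex, the whole segment lies in $D$. There the leading coefficient $\partial_y^\ell R/\ell!$ is positive, by the chain $\mathcal{C}_{\partial_yR_\tau}\subset\mathcal{C}_{\partial_y^\ell R_\tau}$. By compactness and the Cauchy root bound, all roots of $R(\tau,z_t,\cdot)$, $0\le t\le1$, are bounded by one constant $C$. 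For $Y>\max\{C,y_*\}$, the path $t\mapsto(z_t,Y)$ stays in $\{R_\tau>0\}$ and starts in $\mathcal{C}_{R_\tau}$ by PRT, hence ends there. With this inserted, the rest of your proposal goes through as in the paper.

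Two smaller corrections. First, positivity of the leading coefficient comes from that chain of component inclusions, not from PRT. Second, $r_k>-\infty$ does not follow from a positive-degree polynomial having a real root, since it need not have one. It follows because $r_k\ge r_{\ell-1}$ and $\partial_y^{\ell-1}h_{\tau,z}$ is affine with positive slope.
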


\begin{proof}
Write $\xi=(\tau,z)$ and let $D$ be the projection of
$\CC_{\partial_yR}$ onto the $\xi$-space. The set $D$ is convex. Write
\[
 R(\xi,y)=\sum_{j=0}^{\ell}a_j(\xi)y^j.
\]
The derivative-component inclusions give
\begin{equation}\label{eq:leading}
 \ell!a_\ell(\xi)=\partial_y^\ell R(\xi,y)>0
 \qquad(\xi\in D).
\end{equation}

\emph{Step 1: every fiber over $D$ meets $\CC_R$.}
Fix $\xi=(\tau,z)\in D$, choose $(z_*,y_*)\in \mathcal{C}_{R_\tau}$, and set
$z_t=(1-t)z_*+tz$, $0\le t\le1$.
The derivative-component inclusion puts $(\tau,z_*)$ in $D$. This implies that
$(\tau,z_t)\in D$ because that $D$ is convex. Continuity on this compact segment and
\eqref{eq:leading} give a positive lower bound for $a_\ell$ and a uniform
bound for all coefficient ratios. The Cauchy root bound therefore gives
\begin{equation}\label{eq:cauchy}
 |w|\le1+\max_{j<\ell}
       \left|\frac{a_j(\tau,z_t)}{a_\ell(\tau,z_t)}\right|
       \le C
\end{equation}
for every complex root $w$ of $R(\tau,z_t,\cdot)$ and every $t\in[0,1]$.
Choose $Y>\max\{C,y_*\}$. The leading coefficient is positive and there
is no root above $C$, so $R(\tau,z_t,Y)>0$. Upward stability gives
$(z_*,Y)\in \mathcal{C}_{R_\tau}$. The path $(z_t,Y)$ stays in $\{R_\tau>0\}$ and
therefore remains in this connected component. Its endpoint proves the
claim.

\emph{Step 2: the largest derivative roots are convex functions.}
The restriction property now applies to every fiber over $D$.
The polynomial $R(\xi,\cdot)$ is right-Noetherian, with largest derivative
roots
\[
 \zeta_0(\xi)\ge\zeta_1(\xi)\ge\cdots\ge\zeta_{\ell-1}(\xi).
\]
For $0\le k<\ell$, the derivative-component inclusions and Step 1 give
a nonempty vertical fiber in $\CC_{\partial_y^kR}$ over each $\xi\in D$.
This fiber is upward stable. It cannot contain a point with
$y\le\zeta_k(\xi)$, since then it would contain a zero of
$\partial_y^kR$. Conversely, any point with $y>\zeta_k(\xi)$ connects
vertically, through positive values, to a sufficiently high point already
in the distinguished component. Consequently,
\begin{equation}\label{eq:epigraph}
 \CC_{\partial_y^kR}\cap(D\times\R)
   =\{(\xi,y):\xi\in D,\ y>\zeta_k(\xi)\}.
\end{equation}
The left side is convex. Hence $\zeta_k$ is convex.

\emph{Step 3: compose with the univariate quotient.}
Successive integration gives
\[
 R(\xi,y)=\ell!a_\ell(\xi)
       \frp_\ell(y;\zeta_0(\xi),\ldots,\zeta_{\ell-1}(\xi)).
\]
The prefactor cancels in $R/\partial_yR$. For two points
$(\xi_q,y_q)\in\CC_{\partial_yR}$, put
\[
 (\xi_t,y_t)=(1-t)(\xi_0,y_0)+t(\xi_1,y_1),\qquad
 \bar\zeta=(1-t)\zeta(\xi_0)+t\zeta(\xi_1).
\]
Coordinatewise convexity gives $\zeta(\xi_t)\le\bar\zeta$.
If $\ell\ge2$, then $y_t>\bar\zeta_1$. The root monotonicity and joint
concavity from Lemma~\ref{lem:univariate} give
\begin{align*}
 \frac{R}{\partial_yR}(\xi_t,y_t)
 &=\Phi_\ell(y_t;\zeta(\xi_t))\\
 &\ge\Phi_\ell(y_t;\bar\zeta)\\
 &\ge(1-t)\Phi_\ell(y_0;\zeta(\xi_0))
       +t\Phi_\ell(y_1;\zeta(\xi_1)).
\end{align*}
For $\ell=1$, the quotient is $y-\zeta_0(\xi)$, which is concave because
$\zeta_0$ is convex.
\end{proof}

\subsection{Joint concavity for matrix lifting}
\begin{proposition}\label{prop:matrix}
Suppose $f(\lambda,\tau)$ is jointly concave on a convex domain
$\E\subset\R^n\times K$, and that the domain and the function are
invariant under permutations of $\lambda$. Then
\[
 \widehat{\E}
   =\{(A,\tau):(\lambda(A),\tau)\in\E\}
\]
is convex and $\widehat f(A,\tau)=f(\lambda(A),\tau)$ is jointly concave
on it.
\end{proposition}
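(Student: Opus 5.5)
The plan follows the standard route for lifting symmetric concave functions to Hermitian matrices, using the tools recalled in Section 2: Ky Fan--Lidskii (Theorem \ref{Fan-Lidskii}) and Rado's theorem (Theorem \ref{Rado}). The argument rests on a single claim; convexity of $\widehat\E$ and concavity of $\widehat f$ both follow from it.

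The claim is the following. Let $(A_0,\tau_0),(A_1,\tau_1)\in\widehat\E$, let $t\in[0,1]$, and put $A_t=(1-t)A_0+tA_1$ and $\tau_t=(1-t)\tau_0+t\tau_1$. Then $(\lambda(A_t),\tau_t)\in\E$ and
\[
f(\lambda(A_t),\tau_t)\ge(1-t)f(\lambda(A_0),\tau_0)+tf(\lambda(A_1),\tau_1).
\]
To prove it, first fix the decreasing ordering of eigenvalues. Permutation invariance of $\E$ and of $f$ makes the choice of ordering irrelevant, so $\lambda(A)$ may be taken to be $\lambda^\downarrow(A)$.

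Set $\mu=(1-t)\lambda^\downarrow(A_0)+t\lambda^\downarrow(A_1)$. By joint convexity of $\E$, $(\mu,\tau_t)\in\E$, and by joint concavity of $f$,
\[
f(\mu,\tau_t)\ge(1-t)f(\lambda^\downarrow(A_0),\tau_0)+tf(\lambda^\downarrow(A_1),\tau_1).
\]
Ky Fan--Lidskii gives $\lambda^\downarrow(A_t)\prec\mu$. Rado's theorem then lets us write $\lambda^\downarrow(A_t)=\sum_P c_P\,P\mu$ with $c_P\ge0$ and $\sum_P c_P=1$.

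Each point $(P\mu,\tau_t)$ lies in $\E$ by permutation invariance. Since $\E$ is convex and the $\tau$-coordinate $\tau_t$ is the same for every term, the convex combination $(\lambda^\downarrow(A_t),\tau_t)$ also lies in $\E$. This proves that $\widehat\E$ is convex.

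For concavity, apply concavity of $f$ in the $\lambda$ variable at the fixed parameter $\tau_t$ (a special case of joint concavity) together with invariance $f(P\mu,\tau_t)=f(\mu,\tau_t)$:
\[
f(\lambda^\downarrow(A_t),\tau_t)\ge\sum_P c_P f(P\mu,\tau_t)=f(\mu,\tau_t).
\]
Chaining this with the inequality for $f(\mu,\tau_t)$ above gives the claim.

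The only point needing care is that the Rado combination leaves $\tau$ unchanged, so the combination must be taken within the slice $\{\tau=\tau_t\}$. This is legitimate because the slice of a convex set is convex and permutation invariance acts only on $\lambda$. I do not expect any genuine obstacle beyond this.
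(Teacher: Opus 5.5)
Your proposal is correct and follows essentially the same route as the paper. Both use Ky Fan--Lidskii majorization $\lambda(A_t)\prec(1-t)\lambda(A_0)+t\lambda(A_1)$, then Rado's theorem to write $\lambda(A_t)$ as a convex combination of permutations at the fixed parameter $\tau_t$, and then symmetry with convexity of $\E$ and concavity of $f$ to get both membership and the chained inequality. Your explicit choice of the decreasing ordering, which Ky Fan's inequality needs, is a small clarification that the paper leaves implicit.
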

\begin{proof}
Let $A_t=(1-t)A_0+tA_1$, $\tau_t=(1-t)\tau_0+t\tau_1$, and set
$\eta_t=(1-t)\lambda(A_0)+t\lambda(A_1)$.
The eigenvalue majorization inequality is
\begin{equation}\label{eq:majorization}
 \lambda(A_t)\prec\eta_t,
 \quad\hbox{equivalently}\quad
 \lambda(A_t)\in\conv\{\pi\eta_t:\pi\text{ a permutation}\},
\end{equation}
according to Theorem \ref{Rado}.

By convexity, $(\eta_t,\tau_t)\in\E$. Symmetry and convexity
then give $(\lambda(A_t),\tau_t)\in\E$. Applying concavity to
the convex combination of permutations in \eqref{eq:majorization} yields
\begin{align*}
 f(\lambda(A_t),\tau_t)
 &\ge f(\eta_t,\tau_t)\\
 &\ge(1-t)f(\lambda(A_0),\tau_0)
          +tf(\lambda(A_1),\tau_1).
\end{align*}
\end{proof}

\subsection{Joint concavity for polarization}\label{sec:rootproof}

We use the notation in the Preliminary section.
Start with $R_0(\s,y)=p_{\s}(y)$ and successively separate variables:
\begin{equation}\label{eq:partial}
 R_j(\s,z_1,\ldots,z_j,y)
 =R_{j-1}+\frac{z_j-y}{n-j+1}\partial_yR_{j-1},
 \qquad1\le j\le n.
\end{equation}
Equivalently,
\begin{equation}\label{eq:partial-diagonal}
 R_j=P_{\s}(z_1,\ldots,z_j,
                \underbrace{y,\ldots,y}_{n-j\ \mathrm{copies}}).
\end{equation}
Indeed, before separating $z_j$ there are $n-j+1$ identical slots equal
to $y$. Symmetry makes their first derivatives equal, and multi-affinity
gives exactly \eqref{eq:partial}. Thus $R_n=P_{\s}(z_1,\ldots,z_n)$.
For fixed $\s$, each $R_j$ is ideal G\aa rding by the partial-polarization
input. The subscript in $(R_j)_{\s}$ means that $\s$ is fixed, not that
a derivative in $\s$ is being taken. Define the total component of $\partial^i_y R_j$ as $$\mathcal{C}_{\partial^i_y R_j} = \{(\s,z_1,\ldots, z_j,y): \s \in \K, (z_1,\ldots, z_j, y)\in \mathcal{C}_{(\partial_y^i R_j)_s}\}$$

\begin{proposition}\label{prop concave pol}
   Suppose that $p_s$ is right-Noetherian. Then, for any $1\le j \le n$, and for  every nonzero $\partial_y^iR_j$,  $\mathcal{C}_{\partial^i_y R_j}$ is convex and, when $i<d$, that
\begin{equation}\label{eq:induction}
 (\partial_y^iR_j)^{1/(d-i)}
 \quad\hbox{is jointly concave on that total component.}
\end{equation}
\end{proposition}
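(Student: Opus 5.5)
We argue by induction on $j$, running the statements for all $i$ at once. The induction is driven by the recursion \eqref{eq:partial}, which we rewrite as
\[
 R_j=\partial_yR_{j-1}\Bigl(\frac{R_{j-1}}{\partial_yR_{j-1}}+\frac{z_j-y}{n-j+1}\Bigr).
\]
The base case $j=0$ is $R_0(\s,y)=p_{\s}(y)$. Here, by \eqref{eq:rootderivatives}, the total component of $\partial_y^iR_0$ is the epigraph $\{(\s,y):\s\in\K,\ y>s_i\}$. This set is convex because $s_i$ is linear in $\s$ and $\K$ is convex. Moreover $(\partial_y^iR_0)^{1/(d-i)}=(d!)^{1/(d-i)}\frp_{d-i}(y;s_i,\ldots,s_{d-1})^{1/(d-i)}$, which is jointly concave there by Lemma~\ref{lem:univariate}.

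For the induction step, note that $\partial_y$ does not commute with the substitution $y\mapsto$ slots, so we differentiate \eqref{eq:partial-diagonal} directly. By multi-affinity and symmetry,
\[
 \partial_y^iR_j=\frac{(n-j)!}{(n-j-i)!}\,\partial_{I}P_{\s}(z_1,\ldots,z_j,y,\ldots,y),
\]
with $|I|=i$ slots removed. This is again a partially diagonal polarization, now of $p_{\s}^{(i)}$, in $n-i$ variables with $j$ separated variables. By \eqref{eq:rootderivatives}, $p_{\s}^{(i)}$ is a root-parameter polynomial in the truncated, still ordered and linearly dependent, parameters $(s_i,\ldots,s_{d-1})$. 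It therefore suffices to treat $i=0$ uniformly over all degrees $e\le d$ and dimensions, and to run the induction in $j$ for this whole family simultaneously.

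The main step, for $i=0$, goes as follows. Put $\ell=d$ (the $y$-degree of $R_{j-1}$, which stays $d$ as long as $n-j+1\ge d$; otherwise we use the actual degree). The fixed-parameter polynomials $(R_{j-1})_{\s}$ are ideal G\aa rding by Theorem~\ref{thm ideal pola} and Theorem~\ref{thm pullback ide}, since the diagonal substitution is a nonnegative linear map with no zero row. The total components of all $\partial_y^kR_{j-1}$ are convex by the induction hypothesis. Lemma~\ref{lem:quotient} then shows that $R_{j-1}/\partial_yR_{j-1}$ is jointly concave on $\CC_{\partial_yR_{j-1}}$. Adding the affine term $(z_j-y)/(n-j+1)$ gives a jointly concave function $Q$ in the variables $(\s,z_1,\ldots,z_j,y)$, and we have $R_j=\partial_yR_{j-1}\cdot Q$. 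By induction, $U=(\partial_yR_{j-1})^{1/(d-1)}$ is positive and concave on its component. Then
\[
 R_j^{1/d}=U^{(d-1)/d}Q^{1/d}
\]
is concave on $\{Q>0\}\cap\CC_{\partial_yR_{j-1}}$ by Lemma~\ref{lem:mean}. The same argument applied to the shifted family handles $\partial_y^iR_j$, with exponent $1/(d-i)$. For $i=d-1$ the function is affine in $y$ and the $z$'s, with linear dependence on $s_{d-1}$, so it is trivially concave.

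The hard part is identifying the domain, that is, proving
\[
 \CC_{R_j}=\CC_{\partial_yR_{j-1}}\cap\{Q>0\}\quad\text{in the variables }(\s,z_1,\ldots,z_j,y),
\]
which gives convexity of $\CC_{R_j}$ as the intersection of a convex set with a superlevel set of a concave function. Fiberwise, for fixed $\s$, this is the analogue of \eqref{eq:25}. It holds because the G\aa rding inclusion $\CC_{R_j}\subset\CC_{\partial_yR_j}$ and the comparison $\partial_yR_j$ versus $\partial_yR_{j-1}$ follow from the slot structure: $\CC_{R_j}$ lies in the component of each single-slot derivative, and $\partial_yR_{j-1}$ restricted appropriately is one such derivative. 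The reverse inclusion uses the upward path argument of Proposition~\ref{prop:derived-component}: positivity is preserved along segments toward $T\one$, by multi-affinity \eqref{eq:26}. If the domain identification through $\CC_{\partial_yR_{j-1}}$ fails directly, we would instead characterize $\CC_{R_j}$ fiberwise as $\{P_{\s}^{(I)}>0\ \forall I\}$ via \eqref{eq:7}. We would then show that each defining function, suitably rooted, is jointly concave, using the same recursion, so that each is a convex superlevel set and the intersection is convex.
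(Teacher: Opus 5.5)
Your architecture is the paper's: the factorization $R_j=\partial_yR_{j-1}\cdot Q$, Lemma~\ref{lem:quotient} applied to $R_{j-1}$, and Lemma~\ref{lem:mean} with $U=(\partial_yR_{j-1})^{1/(d-1)}$. Your handling of $i\ge1$ takes a different but valid route. On the complementary slots $\partial_IP_{\s}=\frac{(n-i)!}{n!}\Pol_{n-i}(p_{\s}^{(i)})$, and $p_{\s}^{(i)}$ is a positive multiple of the root-parameter polynomial with parameters $(s_i,\ldots,s_{d-1})$. So $\partial_y^iR_j$ is a positive multiple of an $i=0$ object for $(d-i,n-i)$, and a simultaneous induction over all $(d',n')$ goes through. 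The paper instead differentiates the recursion, $\partial_y^iR_j=\frac{m-i}{m}G+\frac{z_j-y}{m}\partial_yG$ with $G=\partial_y^iR_{j-1}$, and factors $\partial_y^iR_j=\frac{\partial_yG}{m}\bigl(z_j-y+(m-i)\frac{G}{\partial_yG}\bigr)$ directly.

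The gap is in the step you flag as the hard part, the identification $\CC_{R_j}=(\CC_{\partial_yR_{j-1}}\times\R_{z_j})\cap\{Q>0\}$.

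For $\subset$, the derivative to use is not $\partial_yR_j$ but $\partial_{z_j}R_j=\partial_yR_{j-1}/(n-j+1)$, which does not depend on $z_j$. The G\aa rding inclusion for the ideal G\aa rding polynomial $(R_j)_{\s}$ gives $\CC_{R_j}\subset\CC_{\partial_{z_j}R_j}=\CC_{\partial_yR_{j-1}}\times\R_{z_j}$, and then $R_j>0$ forces $Q>0$.

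For $\supset$, the upward-segment argument of Proposition~\ref{prop:derived-component} does not transfer. There, \eqref{eq:26} gives $P(\lambda+y)\ge P(\lambda)$ for $y\ge0$ only because every mixed partial $\partial_IP(\lambda)$ is nonnegative, which comes from $\lambda$ lying in all the components $\CC_{\partial_iP}$. Here a point of the right-hand side is only known to lie in the component of the single partial $\partial_{z_j}R_j$, i.e.\ of $\partial_jP_{\s}$ at the lifted point. So the partials $\partial_IP_{\s}$ with $j\notin I$ are uncontrolled, and $R_j$ is not multi-affine in $y$ anyway. Your fallback has the same defect: a single set $\{\partial_IP_{\s}>0\}$ need not be convex, and the rooted concavity is known only on the distinguished component, not on the whole positivity set.

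The missing idea is connectedness, and you already have the ingredients:
\begin{enumerate}
\item The right-hand side is convex (a convex set intersected with $\{Q>0\}$, $Q$ concave), so each $\s$-fiber is connected.
\item It lies in $\{R_j>0\}$ by the factorization.
\item It contains $(T,\dots,T)$ for large $T$, where $Q=R_{j-1}/\partial_yR_{j-1}>0$.
\end{enumerate}
Hence it lies in $\CC_{R_j}$; this is exactly how the paper proves \eqref{eq:domainS}. A path argument also works. First raise $z_j$; along this $R_j$ is affine with slope $\partial_yR_{j-1}/(n-j+1)>0$. Then move within $\CC_{\partial_yR_{j-1}}$ to a large diagonal point, keeping $z_j$ large on that compact path, and conclude by PRT.
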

\begin{proof}
We prove by induction on $j$.
For $j=0$, equation~\eqref{eq:rootderivatives} gives
\[
 \partial_y^iR_0=d!\frp_{d-i}(y;s_i,\ldots,s_{d-1}),\qquad
 \CC_{\partial_y^iR_0}=\{(\s,y):\s\in\K,\ y>s_i\}
 \quad(i<d).
\]
Lemma~\ref{lem:univariate} proves the assertion. The derivative of order
$d$ is the positive constant $d!$.

Assume all these assertions at stage $j-1$. Put $m=n-j+1$.
The highest homogeneous part with respect to $(z_1,\ldots, z_j, y)$ in
\eqref{eq:partial-diagonal} is
\[
 \binom nd^{-1}\sum_{k=0}^{\min(j,d)}
    \binom{n-j}{d-k}\sigma_k(z_1,\ldots,z_j)y^{d-k},
\]
where a binomial coefficient is zero when its lower index is outside its
usual range. It follows that
\[
 \deg_y(R_j)_{\s}=\min(d,n-j).
\]
Here the degree is taken with coefficients polynomial in the $z$ variables.
Choose a nonzero $S=\partial_y^iR_j$, and write
$G=\partial_y^iR_{j-1}$ and $e=d-i$. Since
$i\le\min(d,m-1)$, one has $i<m$. Differentiating \eqref{eq:partial}
gives
\begin{equation}\label{eq:RS}
 S=\frac{m-i}{m}G+\frac{z_j-y}{m}\partial_yG.
\end{equation}
In detail, the only extra term in Leibniz' rule is
\[
 \partial_y^i\big((z_j-y)\partial_yR_{j-1}\big)
 =(z_j-y)\partial_y^{i+1}R_{j-1}
                  -i\partial_y^iR_{j-1},
\]
which explains the coefficient $m-i$.

If $\partial_y G\equiv0$, then $S=(m-i)G/m$ with a positive constant
factor. The domain is the old one times the unused $z_j$-line, and the
inductive conclusion follows. This also handles any surviving derivative
of order $i=d$, which is a positive constant.

Suppose now that $\partial_y G\not\equiv0$. All its successive
$y$-derivatives are derivatives of $R_{j-1}$, so their total components
are convex by the induction hypothesis. The fixed-parameter
ideal G\aa rding property and the fixed positive $y$-degree allow
Lemma~\ref{lem:quotient} to be applied to $G$. Hence
\begin{equation}\label{eq:L}
 L=z_j-y+(m-i)\frac{G}{\partial_y G}
 \quad\hbox{is concave on }\CC_{\partial_y G}\times\R_{z_j}.
\end{equation}
Moreover,
\begin{equation}\label{eq:factorS}
 S=\frac{\partial_y G}{m}L,\qquad
 \partial_{z_j}S=\frac{\partial_y G}{m}.
\end{equation}
We claim that
\begin{equation}\label{eq:domainS}
 \CC_S=\{(\xi,y,z_j):(\xi,y)\in\CC_{\partial_y G},\ L>0\},
 \qquad \xi=(\s,z_1,\ldots,z_{j-1}).
\end{equation}
Proof of this claim: Denote $E_{\s}= \{(z_1,\ldots, z_j,y): (\s, z_1,\ldots, z_j,y) \in \CC_{\partial_y G},\ L(\s, z_1,\ldots, z_j,y)>0 \}.$\\
First, we prove that $\{\s\} \times E_{\s}\subset \CC_S$. By (\ref{eq:L}), $L$ is concave. By induction hypothesis, $\CC_{\partial_y G}$ is also convex. As a result, $E_{\s}$ is convex. This implies that $E_{\s}$ is connected.   Consider $p_T$ which corresponds to the point where all the components are equal to $T$. Since $G$ and $\partial_y G$ are G\aa rding polynomials, we have that $(\s,p_T)$ lies in $\CC_G \cap \CC_{\partial_y G}$ for $T$ big enough. As a result, $L(\s,p_T)= (m-i)\frac{G}{\partial_y G}(\s,p_T)>0$. This implies that $p_T \in E_{\s}$ for all $T$ big enough. By (\ref{eq:factorS}), $E_{\s}\subset \{S>0\}.$ Recall that $E_{\s}$ is connected. Then, $E_{\s}$ is contained in the connected component of $\{S>0\}$ which contains $p_T$ for $T$ big enough. By the definition of $\CC_S$, we have that $\{\s\}\times E_{\s} \subset \CC_S$. \\
Next, we prove that $\CC_S \subset \{(\xi,y,z_j):(\xi,y)\in\CC_{\partial_y G},\ L>0\}.$ The derivative-component
inclusion for $S$ gives
$\mathcal{C}_S\subset \mathcal{C}_{\partial_{z_j}S}=\mathcal{C}_{\partial_y G}\times\R_{z_j}$. As a result, for any $(\xi,y,z_j)\in \CC_S$, we have that $(\xi,y)\in \mathcal{C}_{\partial_y G}$. This implies that $S(\xi,y,z_j)>0$ and $\partial_y G(\xi,y)>0$. Then, \eqref{eq:factorS}  implies $L(\xi,y,z_j)>0$. As a result, we have that $(\xi,y,z_j) \in  \{(\xi,y,z_j):(\xi,y)\in\CC_{\partial_y G},\ L>0\}$. This finishes the proof of this Claim.

If $e\ge2$, the induction hypothesis at derivative order $i+1$ says
that $(\partial_y G)^{1/(e-1)}$ is concave. On \eqref{eq:domainS},
\begin{equation}\label{eq:weightedS}
 S^{1/e}=m^{-1/e}
   \left((\partial_y G)^{1/(e-1)}\right)^{(e-1)/e}L^{1/e}.
\end{equation}
Both arguments of the weighted geometric mean are positive and concave;
Lemma~\ref{lem:mean} proves the desired concavity. If $e=1$, $S$ is
affine in all the variables: $p_{\s}(y)$ is homogeneous of degree $d$
jointly in $(\s,y)$, partial polarization preserves this total degree,
and $i=d-1$ differentiations leave degree one.

This completes the induction and proves the assertion for $P_{\s}=R_n$.
Finally,
\begin{equation}\label{eq:directional-family}
 \D^jP_{\s}
   =d!\Pol_n\big(\frp_{d-j}(\,\cdot\,;s_j,\ldots,s_{d-1})\big).
\end{equation}
Apply the same proof to this lower-degree family, retaining the same
$n$ polarization variables. The unused parameters $s_0,\ldots,s_{j-1}$
and the restriction to the convex chamber $\K$ do not affect concavity.
For $j=d$ the polynomial is the constant $d!$.
\end{proof}

\subsection{Joint concavity for quotient}

\begin{proposition}\label{prop:Theta}
The function
\begin{equation}\label{eq:Theta}
 \Theta(\lambda,\s)=\frac{P_{\s}(\lambda)}{\D P_{\s}(\lambda)}
\end{equation}
is jointly concave on $\{(\lambda,\s):\s\in\K,
                    \lambda\in C_{\D P_{\s}}\}$. Its spectral matrix lift is also
jointly concave.
\end{proposition}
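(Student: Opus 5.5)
The plan is to realize $\Theta$ as a quotient $R/\partial_yR$ of the type treated in Lemma~\ref{lem:quotient}, by introducing one extra variable along the diagonal direction $\one$. Concretely, set
\[
\widetilde R(\s,\mu,y):=P_{\s}(\mu+y\one),\qquad \mu\in\R^n,\ y\in\R,
\]
so that $\partial_y\widetilde R=(\D P_{\s})(\mu+y\one)$ and $\widetilde R/\partial_y\widetilde R$ evaluated at $y=0$ is exactly $\Theta(\mu,\s)$. This is the pullback of $P_{\s}$ under the affine map $T(\mu,y)=\mu+y\one$. Its matrix has nonnegative entries and no zero row, and $b=0$. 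By Theorem~\ref{thm ideal pola} and Theorem~\ref{thm pullback ide}, every fixed-parameter polynomial $\widetilde R_{\s}$ is ideal G\aa rding. In $y$ it has degree exactly $d$, with leading coefficient $\binom nd^{-1}\binom nd>0$ independent of $(\s,\mu)$, so hypotheses (i) and (ii) of Lemma~\ref{lem:quotient} hold with $\ell=d$ and $K=\K$.

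For hypothesis (iii) we need convexity of the total components of $\partial_y^k\widetilde R=(\D^kP_{\s})(\mu+y\one)$ for $0\le k\le d$. For fixed $\s$, the component of the pullback is the preimage under $T$ of $\CC_{\D^kP_{\s}}$. This holds because the preimage is connected: the fibers of $T$ are lines and $T$ is a linear surjection, and $\{\widetilde R>0\}$ is itself the preimage of $\{\D^kP_{\s}>0\}$. Hence the total component is the preimage, under the linear map $(\s,\mu,y)\mapsto(\s,\mu+y\one)$, of $\{(\s,\lambda):\s\in\K,\ \lambda\in\CC_{\D^kP_{\s}}\}$. By \eqref{eq:directional-family}, $\D^kP_{\s}=d!\Pol_n(\frp_{d-k}(\cdot;s_k,\ldots,s_{d-1}))$, and Proposition~\ref{prop concave pol}, applied to this lower-degree family as in its final paragraph, gives convexity of that total component. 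Preimages of convex sets under linear maps are convex, so (iii) holds.

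Lemma~\ref{lem:quotient} then yields joint concavity of $\widetilde R/\partial_y\widetilde R$ on the total component of $\partial_y\widetilde R$. Restricting to the affine slice $\{y=0\}$ preserves both concavity and convexity of the domain. On this slice the domain is $\{(\s,\lambda):\s\in\K,\ \lambda\in\CC_{\D P_{\s}}\}$, which gives the first claim. For the spectral lift, $\Theta$ and its domain are invariant under permutations of $\lambda$ because $P_{\s}$ is symmetric. Proposition~\ref{prop:matrix}, with $K=\K$, then gives joint concavity of $(A,\s)\mapsto\Theta(\lambda(A),\s)$.

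The main obstacle is the identification of the pullback component with the preimage of the original component, which is needed for hypothesis (iii) of Lemma~\ref{lem:quotient}. If the connectedness argument feels delicate, the fallback is to verify it directly. Along the lines $y\mapsto(\mu-y\one,y)$, the value $\widetilde R$ is constant, so any component of $\{\widetilde R>0\}$ is a union of such lines. It is therefore exactly the preimage of a component of $\{\D^kP_{\s}>0\}$, and the one containing large diagonal points corresponds to $\CC_{\D^kP_{\s}}$. A minor secondary point is that Lemma~\ref{lem:quotient} needs the parameter set $\K$ to be convex, which is clear since $\K$ is a closed polyhedral cone.
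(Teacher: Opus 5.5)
Your proposal is correct and follows the paper's proof essentially step for step. You use the same pullback $P_{\s}(\lambda+y\one)$ under $T=[\,I_n\mid\one\,]$, get the ideal G\aa rding property from Theorem~\ref{thm pullback ide}, obtain convexity of the total components as preimages of those from Proposition~\ref{prop concave pol}, apply Lemma~\ref{lem:quotient}, and restrict to $\{y=0\}$. The differences are minor: you identify $\CC_{\partial_y^k\widetilde R_{\s}}=T^{-1}(\CC_{\D^kP_{\s}})$ through the product structure $\{\widetilde R_{\s}>0\}\cong\{\D^kP_{\s}>0\}\times\R$ rather than the paper's upward-ray argument, and you make the spectral lift explicit via Proposition~\ref{prop:matrix}, which the paper's proof leaves implicit.
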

\begin{proof}
Set $R(\lambda,\s,y)=P_{\s}(\lambda+y\one)$. For fixed $\s$, this is
the pullback of $P_{\s}$ under the linear map
\begin{equation}\label{eq:pullback}
 T:\R^{n+1}\longrightarrow\R^n,\qquad
 T(\lambda,y)=\lambda+y\one,\qquad
 [T]=[\,I_n\mid\one\,].
\end{equation}
The matrix has nonnegative entries and no zero row, so Theorem \ref{thm pullback ide} implies that for fixed $\s$, $R(\lambda,\s,y)$ is an ideal G\aa rding polynomial. 

We claim that: For fixed $\s$, $T^{-1}(\CC_{\D^i P_{\s}}) = \CC_{\partial_y^i R_{\s}}.$ 
Proof of the claim: Note that $T^* (\D^i P_{\s})= \partial_y^i R_{\s}$. We just prove the case that $i=0$ and the case for general $i$ can be proved in the same way.
First, we have that:
\begin{equation*}
    T^{-1} (\CC_{P_s}) \subset \{(s,\lambda,y): T^* P_s(\lambda,y)>0\}
\end{equation*}
Take any $(\lambda,y)\in T^{-1}(\CC_{P_s})$. For any $t\ge 0$, $T\big( (\lambda,y)+ t\one \big)= (\lambda+ t \one + (y+t)\one)$. As a result, we have:
\begin{equation*}
    P_{\s}\Big( T\big( (\lambda,y)+ t\one \big) \Big) = P_{\s}\big( \lambda + y \one + 2t \one \big)>0,
\end{equation*}
because $T(\lambda,y)\in \CC_{P_s}$ by assumption. As a result, we have that $(\lambda,y)\in  \CC_{R_{\s}}$. This shows that $T^{-1}(\CC_{ P_{\s}}) \subset \CC_{R_{\s}}.$

Conversely, for any $(\lambda,y)\in \CC_{R_{\s}}$, we have that:
\begin{equation*}
    P_{\s}(T(\lambda,y)+ t \one)= P_{\s}(T(\lambda+ \frac{t}{2}\one, y+ \frac{1}{2}t ))= T^* P_{\s}\big( (\lambda,y) + \frac{t}{2}\one \big) >0.
\end{equation*}
As a result, we have $T(\lambda,y)\in \CC_{P_{\s}}$. This implies that $(\lambda,y)\in T^{-1}\CC_{P_s}$. Then, we have concludes the proof of this claim.

By Proposition \ref{prop concave pol}, we have that $\CC_{\D^{(i)} P}= \CC_{Pol (p^{(i)})}$ is joint convex. Then, we use the fact that $T$ is linear and the above claim to get that $\CC_{\partial_y^i R}$ is convex. Then, we can apply Lemma \ref{lem:quotient} to get that 
\begin{equation*}
    \Theta (\lambda,\s,y)= \frac{P_{\s}(\lambda+ y \one)}{\D P_{\s}(\lambda+ y \one)}
\end{equation*}
is jointly concave. Restricting this function on the slice $\{y=0\}$ concludes the proof of this Proposition.
\end{proof}

\section{A priori estimates for the radial equation }\label{sec:concavity}
For a positive function $\rho\in C^\infty(X)$, set
\begin{equation}\label{eq:scaled-components}
\begin{gathered}
p_{\rho,z}(s)=\rho(z)^d p_z(s/\rho(z)),\qquad
P_{\rho,z}(\lambda)=\rho(z)^dP_z(\lambda/\rho(z)),\\
\mathcal{C}_{P_{\rho,z}}=\rho(z)\mathcal{C}_{P_z},\qquad
\mathcal{C}_{P_{\rho,z}}^{(1)}=\rho(z)\mathcal{C}_{P_z}^{(1)}.
\end{gathered}
\end{equation}
The radial equation is
\begin{equation}\label{eq:radial-rho}
\kappa_z(\Lambda_\varphi)=\rho(z),\qquad
\varphi\in\SH_d(X,\omega_0),
\end{equation}
or, equivalently,
\begin{equation}\label{eq:radial-zero}
P_{\rho,z}(\Lambda_\varphi)=0,\qquad
\Lambda_\varphi\in\partial \mathcal{C}_{P_{\rho,z}}.
\end{equation}

\subsection{The radial subsolution condition}
Recall that a function $\underline\varphi\in C^2(X,\R)$ is a
$\mathcal C$-subsolution of \eqref{eq:radial-rho} if
\begin{equation}\label{eq:subsolution-slice}
\{\lambda\in\Gamma_d:\kappa_z(\lambda)=\rho(z),\quad
\lambda-\Lambda_{\underline\varphi}(z)\in\Gamma_n^+\}
\end{equation}
is bounded for every $z\in X$. We impose
neither $\Lambda_{\underline\varphi}\in\Gamma_d$ nor
$\Lambda_{\underline\varphi}\in \mathcal{C}_{P_z}^{(1)}$.
In particular, $\kappa_z(\Lambda_{\underline\varphi})$ need not be defined.

\begin{proposition}\label{prop:subsolution}
For $\underline\varphi\in C^2(X,\R)$, the following are equivalent:
\begin{enumerate}
\item $\underline\varphi$ is a $\mathcal C$-subsolution of
\eqref{eq:radial-rho}.
\item For every $z\in X$,
\begin{equation}\label{eq:radial-derived}
\rho(z)^{-1}\Lambda_{\underline\varphi}(z)\in \mathcal{C}_{P_z}^{(1)},
\quad\text{equivalently}\quad
\partial_IP_z(\rho(z)^{-1}\Lambda_{\underline\varphi}(z))>0
\quad(1\le |I|\le d-1).
\end{equation}
\item There are $\delta,R>0$ such that, for every $z\in X$,
\begin{equation}\label{eq:81}
\left\{y\ge0:\Lambda_{\underline\varphi}(z)-2\delta\one+y
\in\rho(z)\partial \mathcal{C}_{P_z}\right\}\subset\{|y|\le R\}.
\end{equation}
\end{enumerate}
\end{proposition}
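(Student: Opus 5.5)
The plan is to argue pointwise in $z$ and rescale. Fix $z$, write $P=P_z$, and set $\mu=\rho(z)^{-1}\Lambda_{\underline\varphi}(z)$. By \eqref{eq:179} the level set $\{\kappa_z=\rho(z)\}$ is $\rho(z)\partial\mathcal{C}_{P}$. So after dividing by $\rho(z)$, the slice \eqref{eq:subsolution-slice} is bounded exactly when
\[
B(\mu):=\{x\in\partial\mathcal{C}_P:\ x-\mu\in\Gamma_n^+\}
\]
is bounded. The proof then reduces to a statement about $\mu$ and the convex, upward-stable sets $\mathcal{C}_P\subset\mathcal{C}_P^{(1)}$ from Proposition~\ref{prop:derived-component} and Lemma~\ref{lem:component-closure}. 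The equivalence of the two formulations inside \eqref{eq:radial-derived} is just \eqref{eq:7}.

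\emph{(ii)$\Rightarrow$(i).} I would use the multi-affine expansion \eqref{eq:26} with $J=\varnothing$. For $x=\mu+y$ with $y\in\Gamma_n^+$,
\[
P(\mu+y)=P(\mu)+\sum_{1\le|I|\le d}\partial_IP(\mu)\,y^I .
\]
By \eqref{eq:7} every coefficient with $1\le|I|\le d-1$ is positive, and for $|I|=d$ it equals $\binom nd^{-1}$. Hence $P(\mu+y)\ge P(\mu)+c|y|$ with $c=\min_i\partial_iP(\mu)>0$. Points of $\partial\mathcal{C}_P$ satisfy $P=0$, so $|y|\le |P(\mu)|/c$ on $B(\mu)$.

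\emph{(ii)$\Rightarrow$(iii).} The map $z\mapsto\rho(z)^{-1}\Lambda_{\underline\varphi}(z)$ is continuous, $X$ is compact, and $\mathcal{C}_{P_z}^{(1)}$ is cut out by the strict inequalities $\partial_IP_z>0$ with coefficients continuous in $z$. So there is $\delta>0$ such that $\mu_z-2\delta\rho(z)^{-1}\one$ still satisfies $\partial_IP_z>0$ for all $z$, with a uniform positive lower bound on $\min_i\partial_iP_z$. The linear-growth estimate above then gives a bound $R$ uniform in $z$; the limiting case $y\in\overline{\Gamma_n^+}$ is handled by the same expansion. This yields \eqref{eq:81}.

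\emph{(iii)$\Rightarrow$(i).} If $x-\Lambda_{\underline\varphi}(z)\in\Gamma_n^+$, then $x-(\Lambda_{\underline\varphi}(z)-2\delta\one)\in\Gamma_n^+$. So the slice in (i) is contained in the set in \eqref{eq:81}, shifted by the bounded vector $\Lambda_{\underline\varphi}(z)-2\delta\one$, and is therefore bounded.

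\emph{(i)$\Rightarrow$(ii), the main obstacle.} I would argue by contraposition: assume $\mu\notin\mathcal{C}_P^{(1)}$ and produce an unbounded family in $B(\mu)$. The first attempt uses lines $x_{T}(y)=\mu+T(\one-e_k)+y\,e_k$. For $T$ large and $y$ large, $x_T(y)\in\mathcal{C}_P$: apply Proposition~\ref{prop:derived-component} through the positivity of the top-order terms in \eqref{eq:26}. Upward stability then gives a threshold $y_T$ with $x_T(y_T)\in\partial\mathcal{C}_P$, and $|x_T(y_T)|\ge T\to\infty$.

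What remains, and what I expect to be hardest, is choosing the index $k$ so that $y_T>0$ for all large $T$. This amounts to showing that $\mu+T(\one-e_k)\notin\overline{\mathcal{C}_P}$ for every $T$.

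To choose $k$, I would separate $\mu$ from the open convex set $\mathcal{C}_P^{(1)}$ by a normal $\nu\ge0$; upward stability forces $\nu\ge0$. I would then try to show, using the symmetry of $\mathcal{C}_P^{(1)}$ and the structure of the recession behaviour of $P$ along $\one-e_k$, that $\nu$ may be taken to be $e_k$. In terms of \eqref{eq:7}, this means showing that $\lim_{T\to\infty}\mu+T(\one-e_k)$ fails to enter $\mathcal{C}_P^{(1)}$. This is the analogue of Sz\'ekelyhidi's characterization of $\mathcal C$-subsolutions through $\lim_{t\to\infty}f(\mu+te_i)$.

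If the separation argument fails, the fallback is to analyse $\partial_{\{k\}}P$ along $\mu+T(\one-e_k)$ as a polynomial in $T$. Its leading term in $T$ is controlled by $\mu_k$ and the root $r_1$, which would show that the boundary point $x_T(y_T)$ lies strictly above $\mu$ in the $k$-th coordinate.
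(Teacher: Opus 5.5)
Your implications (ii)$\Rightarrow$(i), (ii)$\Rightarrow$(iii) and (iii)$\Rightarrow$(i) are correct and essentially match the paper's proof. The gap is in (i)$\Rightarrow$(ii). There, the family $\mu+T(\one-e_k)+ye_k$ fails for every choice of $k$ as soon as $d<n$.

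To see this, take $0\le|I|\le d-1$ (with $\partial_\varnothing P=P$). At the point $\mu+T(\one-e_k)$, $\partial_IS_d$ equals $\binom nd^{-1}\sigma_{d-|I|}$ of the coordinates outside $I$. At least $n-1-|I|\ge d-|I|$ of these coordinates equal $\mu_j+T$. Hence $\partial_IP$ has a positive leading term of order $T^{d-|I|}$, while each $a_j\partial_IS_j$ is $O(T^{j-|I|})$. By \eqref{eq:7} and \eqref{eq:25}, $\mu+T(\one-e_k)\in\mathcal{C}_P$ for all large $T$, whatever $\mu$ and $k$ are.

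Consequences for your plan:
\begin{enumerate}
\item Your threshold $y_T$ is eventually negative, so $x_T(y_T)-\mu\notin\Gamma_n^+$.
\item $\mathcal{C}_P^{(1)}$ lies in no half-space $\{\lambda_k>c\}$, so the separating normal can never be $e_k$.
\item The fallback also fails: $\partial_kP$ does not depend on $\lambda_k$ at all, and its leading coefficient along $\one-e_k$ is $\binom{n-1}{d-1}\binom nd^{-1}>0$ regardless of $\mu$.
\end{enumerate}
At best your construction could handle only the case $d=n$.

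The missing idea is to push a single coordinate, chosen from a failing multi-index, and then climb the diagonal. Suppose $\mu\notin\mathcal{C}_P^{(1)}$. Then \eqref{eq:7} gives $I$ with $1\le|I|\le d-1$ and $\partial_IP(\mu)\le0$. Pick $i\in I$. Multi-affinity makes $\partial_IP$ independent of $\lambda_i$, so $\partial_IP(\mu+te_i)=\partial_IP(\mu)\le0$ for all $t\ge0$. Thus the whole ray avoids $\mathcal{C}_P^{(1)}\supset\overline{\mathcal{C}_P}$ (Lemma~\ref{lem:component-closure}).

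Now set $s_t=\inf\{s\ge0:\mu+te_i+s\one\in\mathcal{C}_P\}$. It is finite, because large diagonal points lie in $\mathcal{C}_P$ and $\mathcal{C}_P$ is upward stable. It is strictly positive, because $\mu+te_i\notin\overline{\mathcal{C}_P}$. Hence $\mu+te_i+s_t\one\in\partial\mathcal{C}_P$, with offset $te_i+s_t\one\in\Gamma_n^+$ of length at least $t$. Letting $t\to\infty$ shows the slice $B(\mu)$ is unbounded.

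This is the paper's argument. It is the coordinate-ray criterion you mention in connection with Sz\'ekelyhidi, but applied along $e_i$ with $i\in I$ rather than along $\one-e_k$.
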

\begin{proof}
Fix $z$ and write $\mu=\Lambda_{\underline\varphi}(z)$, $Q_z=P_{\rho,z}$. From the scaling identity \eqref{eq:scaled-components}, and Proposition~\ref{prop:derived-component}, the condition (ii) is equivalent to $\mu\in C_{Q_z}^{(1)}$, or equivalently \[
\partial_IQ_z(\mu)>0,\quad \forall 1\le|I|\le d-1.
\]
On the other hand, by Lemma~\ref{lem:radial-scale}, the set in \eqref{eq:subsolution-slice} is \[
\rho(z)\partial \mathcal{C}_{P_z}\cap (\mu+\Gamma_n)=\partial \mathcal{C}_{Q_z}\cap (\mu+\Gamma_n).
\]

(i) implies (ii): If (ii) fails at some $z$, by Proposition~\ref{prop:derived-component}, there exists some $I\subset [n]$, $1\le |I|\leq d-1$, such that $\partial_IQ_z(\mu)\leq 0$. Choose $i\in I$, then multi-affinity imples $\partial_i\partial_IQ_z=0$. Consequently, \[
\partial_IQ_z(\mu+te_i)=\partial_IQ_z(\mu)\leq 0.
\]
So the whole ray $x_t=\mu+te_i$ lies outside $\mathcal{C}^{(1)}_{Q_z}$. Hence $x_t\notin \overline{\mathcal{C}_{Q_z}}$. For each $t$, choose $T$ so large that $T\one\in \mathcal{C}_{Q_z}$, and then choose $L>0$ with $\mu+L\one- T\one\in \Gamma_n$. It follows that \[
x_t+L\one=\mu+L\one+te_i\in \mathcal{C}_{Q_z},\quad t\ge 0.
\]
Thus \[
s_*:=\inf\{s\geq 0:x_t+s\one\in \mathcal{C}_{Q_z}\}
\] is finite. $s_*$ is strictly positive since $x_t\notin\overline{C_{Q_z}}$. Hence \[
x_t+s_t\one\in \partial \mathcal{C}_{Q_z},\quad x_t+s_t\one-\mu\in \Gamma_n, \quad |x_t+s_t\one-\mu|\geq t. 
\]
We get a contradiction as $t\to \infty$. 

(ii) implies (iii): By compactness of $X$, there exists a positive $m>0$ such that \[
\partial_IQ_z(\mu)>m, \quad \forall 1\le |I|\le d-1.
\]
By continuity, we can choose $\delta>0$ small, so that \[
\partial_IQ_z(\mu-2s\one)\geq \frac{m}{2} \quad \forall 0<s\le\delta, 1\le|I|\le d-1.
\]
By the multi-affine expansion formula \[
Q_z(\mu-2\delta\one+y)=Q_z(\mu-2\delta\one)+\sum_{1\le|I|\le d}\partial_IQ_{z}(\mu-2\delta\one)\prod_{i\in I}y_i\geq Q_z(\mu-2\delta\one)+\frac{m}{2}\sum_{i}y_i.
\]
So if $y$ belongs to the set in (iii), i.e. $Q_z(\mu-2\delta\one+y)=0$, it follows that \[
|y|\leq \sum_iy_i\leq C, \quad \text{  for a uniform constant } C.
\]

(iii) implies (i): Take $\lambda\in \partial \mathcal{C}_{Q_{z}}\cap (\mu+\Gamma_n)$. Define $y=\lambda-\mu+2\delta\one\in \Gamma_n$. Then $\mu-2\delta\one+y\in \partial \mathcal{C}_{Q_z}$, so $y$ is a point in the bounded set of (iii) and $|y|\leq R$.  It follows that $|\lambda-\mu|\leq |y|\leq R$. So the set in (i) is bounded. 
\end{proof}

\begin{remark}
    These conditions in the above proposition still hold under sufficiently small $C^2$ perturbations
of $\underline\varphi$ and $C^0$ perturbations of $\rho$.
Consequently, a $C^2$ subsolution may be replaced by a smooth one.
The constants $\delta,R$ are uniform for a compact family of positive
functions $\rho$ having a common subsolution.
\end{remark}
\begin{proposition}\label{res:3-1}
If $\underline\varphi$ is a $\mathcal C$-subsolution at $\sigma$,
there is $\varepsilon>0$ such that it is a subsolution of
\eqref{eq:radial-rho} whenever
\begin{equation}\label{eq:197}
0<\rho(z)\le\sigma+\varepsilon\qquad(z\in X).
\end{equation}
For any fixed lower bound $\rho\ge\rho_->0$, the constants in
\eqref{eq:81} can be chosen independently of $\rho$ satisfying
\eqref{eq:197}.
\end{proposition}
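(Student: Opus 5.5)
The plan is to work with characterization (ii) of Proposition~\ref{prop:subsolution}. A $\mathcal C$-subsolution at $\sigma$ is a subsolution of \eqref{eq:radial-rho} with $\rho\equiv\sigma$. That proposition then gives
\[
\sigma^{-1}\Lambda_{\underline\varphi}(z)\in\mathcal C^{(1)}_{P_z}\qquad\text{for every } z\in X,
\]
that is, $\partial_IP_z(\sigma^{-1}\Lambda_{\underline\varphi}(z))>0$ for $1\le|I|\le d-1$.

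\textbf{Choice of $\varepsilon$.} The map $(z,t)\mapsto \partial_IP_z(t^{-1}\Lambda_{\underline\varphi}(z))$ is continuous on $X\times(0,\infty)$, because the coefficients $a_k$ are smooth and $\Lambda_{\underline\varphi}$ is continuous. Since $X$ is compact, there is $\varepsilon>0$ such that these derivatives stay positive for $t\in[\sigma,\sigma+\varepsilon]$.

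\textbf{Small $t$ via upward stability.} For $t<\sigma$ I would use the stability of $\mathcal C^{(1)}_{P_z}$ under addition of $\overline{\Gamma_n}$ (Proposition~\ref{prop:derived-component}). This is the main obstacle, since scaling down $\mu=\Lambda_{\underline\varphi}(z)$ is not a translation by a positive vector when $\mu$ has negative entries. The idea is to exploit the fact that $\mathcal C^{(1)}_{P_z}$ is a convex upward-stable set containing all large diagonal points. The recession cone of a convex set is a convex cone, and by upward stability it contains $\overline{\Gamma_n}$. I do not, however, know whether $\mu$ itself lies in that recession cone.

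\textbf{Alternative: the slice condition.} A more robust route is to prove boundedness of the set \eqref{eq:subsolution-slice} directly. Suppose $\kappa_z(\lambda)=\rho(z)\le\sigma+\varepsilon$ and $\lambda-\mu\in\Gamma_n^+$. Then $\lambda'=\tfrac{\sigma+\varepsilon}{\rho(z)}\lambda$ satisfies $\kappa_z(\lambda')=\sigma+\varepsilon$ by homogeneity. Moreover $\lambda'-\lambda$ is a nonnegative multiple of $\lambda\in\Gamma_d$, though this is not necessarily in $\Gamma_n$, so the comparison $\lambda'-\mu\in\Gamma_n^+$ needs care.

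Instead I would use (iii) of Proposition~\ref{prop:subsolution} at level $\sigma+\varepsilon$, in the expansion form from its proof. With $m$ and $\delta$ uniform, the multi-affine expansion gives
\[
P_z\bigl(t^{-1}(\mu-2\delta\one+y)\bigr)\ge P_z\bigl(t^{-1}(\mu-2\delta\one)\bigr)+c\,t^{-1}\sum_i y_i .
\]
Here the derivative lower bounds $\partial_IP_z(t^{-1}(\mu-2s\one))\ge m/2$ come from compactness of $X\times[\rho_-,\sigma+\varepsilon]$. Since every point of $\rho(z)\partial\mathcal C_{P_z}$ has $P_z(\cdot/\rho(z))=0$, we get $\sum_i y_i\le C$, with $C$ depending only on $\rho_-$, $\sigma+\varepsilon$, $\underline\varphi$ and the coefficient bounds. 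This uniformity is the second assertion of the proposition.

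\textbf{Remaining issue.} This route requires positivity of the derivatives at scales $t\in[\rho_-,\sigma]$, which I would still have to establish. If it fails, I would restrict to what is actually used: the statement with $\rho$ near $\sigma$ only. Otherwise I would argue that the set of $t$ with $t^{-1}\mu\in\mathcal C^{(1)}$ is an interval, using convexity of $\mathcal C^{(1)}$ together with the fact that $T\one\in\mathcal C^{(1)}$ for all large $T$.
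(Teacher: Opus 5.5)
\noindent Your choice of $\varepsilon$ is correct: continuity plus compactness for $t\in[\sigma,\sigma+\varepsilon]$, exactly as in the paper. Your compactness argument for uniform slice constants is also fine, but only \emph{once} $\partial_IP_z(t^{-1}\Lambda_{\underline\varphi}(z))>0$ is known on $X\times[\rho_-,\sigma+\varepsilon]$. That positivity for $t<\sigma$ is the whole content of the proposition, and you leave it open. It amounts to outward scaling invariance: $\mu/(\sigma+\varepsilon)\in\mathcal C^{(1)}_{P_z}$ must imply $\mu/\rho\in\mathcal C^{(1)}_{P_z}$ for every $0<\rho\le\sigma+\varepsilon$. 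Neither of your fallbacks gives this.
\begin{itemize}
\item Restricting to $\rho$ near $\sigma$ proves a weaker statement than the one used. In the continuity path of Theorem~\ref{thm:radial}, $\rho_t$ ranges over $[h^-,h^+]$, and $h^-$ may lie far below $\sigma$.
\item The convexity argument only shows that $\{s>0:s\mu\in\mathcal C^{(1)}_{P_z}\}$ is an interval containing $1/(\sigma+\varepsilon)$. It does not show the interval is unbounded above.
\end{itemize}
In fact no argument using only convexity, upward stability and large diagonal points can work. These properties hold for every right-Noetherian $p$, while outward invariance fails when $a_{d-1}<0$. Take $n=d=2$ and $p(s)=s^2+2s$, which is strictly right-Noetherian with $r_0=0>r_1=-1$. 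Then $\mathcal C^{(1)}_P=\{\lambda_1>-1,\ \lambda_2>-1\}$ contains $-\tfrac12\one$ but not $-\tfrac32\one$. So Assumption~\ref{ass1}(iii) has to enter, and your proposal never uses it.

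The missing idea, which is how the paper argues, has two parts. The first is the radial nesting of Proposition~\ref{prop:radial-nesting}(ii), $t\,\overline{\mathcal C_{P_z}}\subset s\,\mathcal C_{P_z}$ for $0<s<t$. This is exactly where the radial condition \eqref{eq:17}, equivalently $a_{d-1}\ge0$, is used. The second transfers this nesting from $\mathcal C_P$ to $\mathcal C^{(1)}_P$: $\mu\in\mathcal C^{(1)}_P$ if and only if for each $i$ there is $R_i>0$ with $\mu+R_ie_i\in\mathcal C_P$. The forward direction follows from $P(\mu+Re_i)=P(\mu)+R\,\partial_iP(\mu)$, upward stability, and \eqref{eq:25}. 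For the reverse direction, given $I$ pick $i\in I$. Multi-affinity makes $\partial_IP$ independent of $\lambda_i$, so $\partial_IP(\mu)=\partial_IP(\mu+R_ie_i)>0$ by \eqref{eq:7}. Together these give $(\sigma+\varepsilon)\mathcal C^{(1)}_{P_z}\subset\rho(z)\,\mathcal C^{(1)}_{P_z}$ whenever $\rho(z)\le\sigma+\varepsilon$. Proposition~\ref{prop:subsolution}(ii) then holds for every such $\rho$, and your minimization over $X\times[\rho_-,\sigma+\varepsilon]$ yields the uniform constants.
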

\begin{proof}
The strict inequalities in \eqref{eq:radial-derived} hold at
$\sigma+\varepsilon$ for sufficiently small $\varepsilon>0$.
For any strictly right-Noetherian polarization $P$,
\begin{equation}\label{eq:coordinate-rays}
\mu\in \mathcal{C}_P^{(1)}
\quad\Longleftrightarrow\quad
\text{for every }i\text{ there is }R_i>0
\text{ with }\mu+R_ie_i\in \mathcal{C}_P.
\end{equation}
Indeed, the forward implication follows from upward stability and
$P(\mu+Re_i)=P(\mu)+R\partial_iP(\mu)$.
For the reverse implication, let $\varnothing\ne I\subset[n]$ with
$|I|\le d-1$, and choose $i\in I$. Multi-affinity makes $\partial_IP$
independent of $\lambda_i$. Since $\mu+R_ie_i\in \mathcal{C}_P$,
Proposition~\ref{prop:derived-component} gives
\[
 \partial_IP(\mu)=\partial_IP(\mu+R_ie_i)>0.
\]
This holds for every such $I$, so $\mu\in \mathcal{C}_P^{(1)}$.
Thus inclusion of positive components implies inclusion of their
first derived components. Proposition~\ref{prop:radial-nesting} gives
\[
(\sigma+\varepsilon)\mathcal{C}_{P_z}^{(1)}
\subset\rho(z)\mathcal{C}_{P_z}^{(1)}
\qquad(0<\rho(z)\le\sigma+\varepsilon).
\]
Proposition~\ref{prop:subsolution} now applies. Uniformity follows by taking
the minima in its proof over
$X\times[\rho_-,\sigma+\varepsilon]$.
\end{proof}

\subsection{The \texorpdfstring{$C^0$}{C0} estimate}\label{sec:source-5-1}
The following $C^0$-estimate is due to Szekelyhidi \cite{Sze18}, we give the details here for the reader's convenience. 
\begin{lemma}\label{lem:C0-estimate}
Let $\underline\varphi$ be a smooth $\mathcal C$-subsolution of
\eqref{eq:radial-rho}, and normalize a solution by
$\int_X\varphi\chi^n=0$. Then $\|\varphi\|_{C^0}\le C$, where
$C$ depends on the fixed background, $\|\underline\varphi\|_{C^2}$,
and the slice constants $\delta,R$ in \eqref{eq:81}.
\end{lemma}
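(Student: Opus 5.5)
We follow Sz\'ekelyhidi's argument \cite[Proposition 10]{Sze18}: an Alexandrov--Bakelman--Pucci (ABP) estimate applied to $u=\varphi-\underline\varphi$. The upper bound comes from a standard Green's function argument. The lower bound uses the cone condition \eqref{eq:81} to control the contact set.

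\emph{Upper bound.} A solution satisfies $\Lambda_\varphi\in\Gamma_d\subset\Gamma_1$, so $\operatorname{tr}_\chi\omega_\varphi>0$. This gives $\Delta_\chi\varphi\ge-\operatorname{tr}_\chi\omega_0\ge -C$, where $\Delta_\chi$ is the complex Laplacian of the Hermitian metric $\chi$. On a Hermitian manifold $\Delta_\chi$ is not self-adjoint with respect to $\chi^n$. We therefore use the Gauduchon conformal factor (or the Green's function of $\Delta_\chi$ with respect to the invariant measure) together with the normalization $\int_X\varphi\chi^n=0$. This yields $\sup_X\varphi\le C$, after comparing the two normalizations through the weak Harnack/$L^1$ bound in the usual way, as in Tosatti--Weinkove \cite{TW10}. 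Hence it suffices to bound $L:=\inf_X(\varphi-\underline\varphi)$ from below. Write $u=\varphi-\underline\varphi$, assume $L=u(x_0)$ is very negative, and take local coordinates on the unit ball $B$ around $x_0$ with $\chi(x_0)=I$.

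\emph{ABP step.} Set $v=u+\delta|x|^2$, with $\delta$ the constant of \eqref{eq:81} (up to a harmless factor). Then $v(0)=L$ and $v\ge L+\delta$ on $\partial B$. Let $P$ be the set of $x\in B$ at which $|Dv(x)|<\delta/2$ and $v(y)\ge v(x)+Dv(x)\cdot(y-x)$ for all $y\in B$. The ABP estimate gives
\[
c_0\delta^{2n}\le\int_P\det(D^2v).
\]
At $x\in P$ we have $D^2v\ge0$, hence $\ddbar v\ge0$, so $\Lambda_\varphi-\Lambda_{\underline\varphi}+\delta'\one$ lies in $\overline{\Gamma_n^+}$ up to the $O(|x|)$ error coming from $\chi$ versus the Euclidean metric; this error is absorbed by shrinking the ball. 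Since $\Lambda_\varphi\in\rho\,\partial\mathcal C_{P_z}$, the point $y:=\Lambda_\varphi-(\Lambda_{\underline\varphi}-2\delta\one)$ has $y\ge0$ and lies in the set of \eqref{eq:81}. Therefore $|\Lambda_\varphi-\Lambda_{\underline\varphi}|\le R+2\delta$, and so $|\ddbar v|\le C$ on $P$.

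\emph{From complex to real Hessian.} This is the step we expect to be the main obstacle. The bound $|\ddbar v|\le C$ together with $D^2v\ge0$ must be converted into $\det D^2v\le C\det(\ddbar v)^2\le C'$. This holds by the standard linear-algebra lemma for nonnegative real symmetric matrices: $D^2v(x)+D^2v(Jx)$ is controlled by $\ddbar v$. Combining, $c_0\delta^{2n}\le C'|P|$.

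\emph{Conclusion.} On $P$ we have $v\le L+\delta$, by convexity from below at $x_0$ together with the gradient bound. Hence $|P|\le|\{u\le L+\delta\}\cap B|$. Because $\varphi$ and $\underline\varphi$ are bounded above and $\Delta_\chi u\ge -C$, the weak Harnack/$L^1$ bound gives $\int_X|u-\sup u|\le C$. Consequently
\[
|\{u\le L+\delta\}|\le \frac{C}{|L|-C}.
\]
Together with $c_0\delta^{2n}\le C'|P|$, this forces $|L|\le C$. All constants depend only on the background data, $\|\underline\varphi\|_{C^2}$, and $\delta,R$, which gives the estimate claimed in Lemma~\ref{lem:C0-estimate}.
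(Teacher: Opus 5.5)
Your proposal is correct and is essentially the paper's argument, which itself follows Sz\'ekelyhidi's proof of \cite[Proposition 10]{Sze18}. Both use an $L^1$ bound for $\varphi-\underline\varphi-\sup_X(\varphi-\underline\varphi)$ obtained from $\Delta_\chi(\varphi-\underline\varphi)\ge -C$ via a Gauduchon Green function, then ABP for $u+\epsilon|z|^2$ at a minimum point, with the slice bound \eqref{eq:81} controlling $\ddbar u$ on the contact set and the real--complex determinant inequality giving the measure lower bound. Your separate preliminary bound on $\sup_X\varphi$ is harmless but redundant, since the paper just derives an oscillation bound and applies the normalization $\int_X\varphi\chi^n=0$ at the end.
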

\begin{proof}
Since $\Lambda_\varphi\in\Gamma_d\subset\Gamma_1$,
\begin{equation}\label{eq:92}
\Delta_\chi\varphi>-\tr_\chi\omega_0\ge-C.
\end{equation}
Put $v=\varphi-\underline\varphi-\sup_X(\varphi-\underline\varphi)$.
Then $\sup_Xv=0$ and $\Delta_\chi v\ge-C$. The Green-function
argument with a Gauduchon representative gives
$\int_X|v|\chi^n\le C$ \cite[proof of Proposition 10]{Sze18}.

Choose a unit coordinate ball centered at a minimum point of $v$, and set
\[
\ell=\min_Xv,\qquad w(z)=v(z)+\epsilon|z|^2,\qquad
E=\{z\in B_1:w(z)\le\ell+\epsilon/2,\ D_\R^2w(z)\ge0\}.
\]
Choose $\epsilon>0$ so that
$\epsilon\ddbar|z|^2\le\delta\chi$, where $\delta$ is as in
Proposition~\ref{prop:subsolution}. For $|p|<\epsilon/4$, a minimum of
$w-p\cdot z$ lies in $E\cap B_1$, so $B_{\epsilon/4}\subset Dw(E)$.
On $E$, eigenvalue monotonicity gives
\[
\ddbar v\ge-\delta\chi,\qquad
y:=\Lambda_\varphi-\Lambda_{\underline\varphi}+2\delta\one
\ge\delta\one.
\]
The eigenvalues are ordered in the same way. Since
$\Lambda_\varphi\in\rho(z)\partial C_{P_z}$, \eqref{eq:81}
gives $|y|\le R$. Thus $|(w_{i\bar j})|\le C$ on $E$, and
the real--complex determinant inequality and the area formula give
\[
c\epsilon^{2n}\le\int_E\det D_\R^2w
\le2^{2n}\int_E\det(w_{i\bar j})^2\le C|E|.
\]
If $\ell<-\epsilon$, then
\[
v\le w\le\ell+\epsilon/2<0\quad\text{on }E,
\qquad
|E|\le\frac{\int_E|v|}{|\ell+\epsilon/2|}
\le\frac C{|\ell+\epsilon/2|}.
\]
Hence $\ell\ge-C$. The oscillation bound and the normalization yield
\begin{equation}\label{eq:93}
\|\varphi\|_{C^0}\le C.
\end{equation}
The proof uses the bounded slices of $\underline\varphi$, without
evaluating $\kappa$ at its eigenvalues.

\end{proof}

\subsection{The \texorpdfstring{$C^2$}{C2} estimate}\label{sec:source-5-2}
 We summarize the method of Hou-Ma-Wu~\cite{HMW10} and the Hermitian extension by Szekelyhidi~\cite{Sze18} on the $C^2$-estimate into the following toolbox. Its proof is identical to the proof of \cite[Section 4, Proposition 13]{Sze18}. 
\begin{lemma}[Szekelyhidi~\cite{Sze18}]
\label{lem:szekelyhidi-differential-estimate}
Let $(X,\chi)$ be a compact Hermitian manifold of dimension $n\ge2$,
let $\beta$ be a fixed smooth real $(1,1)$-form, and let
$u\in C^4(X,\R)$. Put
\[
g=\beta+\ddbar u,\qquad
\lambda=\lambda_\chi(g),\qquad
\lambda_1\ge\cdots\ge\lambda_n.
\]
Assume $\osc_Xu\le L$ and $\tr_\chi g\ge-B$, where $L,B\ge0$.
Suppose a smooth positive Hermitian coefficient tensor is diagonal in
$\chi$-unitary frames diagonalizing $g$, with
\[
F^{i\bar j}=\delta_{ij}F_i,\qquad
0<F_1\le\cdots\le F_n,\qquad \FF=\sum_iF_i.
\]
For constants $C_0,\vartheta>0$, assume the following conditions
in these frames; all derivatives are Chern covariant derivatives.
\begin{enumerate}
\item For each $p$,
\begin{equation}\label{eq:sze-first-derivative}
\left|\sum_iF_i g_{i\bar i,p}\right|\le C_0\FF.
\end{equation}
\item For each $p$,
\begin{equation}\label{eq:sze-second-derivative}
\sum_iF_i g_{i\bar i,p\bar p}
\ge-\sum_{i\ne j}\frac{F_i-F_j}{\lambda_i-\lambda_j}
|g_{i\bar j,p}|^2-C_0\FF.
\end{equation}
The divided differences have continuous nonpositive extensions at
coincident eigenvalues.
\item At every point of $X$, at least one of the alternatives
\begin{equation}\label{eq:sze-subsolution-alternative}
\sum_iF_i(\beta_{i\bar i}-\lambda_i)\ge\vartheta\FF,
\qquad\text{or}\qquad
F_i\ge\vartheta\FF\quad\text{for all }i
\end{equation}
holds.
\end{enumerate}
Then
\begin{equation}\label{eq:sze-hessian-estimate}
\|\ddbar u\|_{C^0(X,\chi)}
\le C\left(1+\sup_X|\nabla u|_\chi^2\right),
\end{equation}
where $C$ depends only on $(X,\chi)$, $\|\beta\|_{C^2}$,
$L,B,C_0$, and $\vartheta$.
\end{lemma}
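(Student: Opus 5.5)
We follow Hou--Ma--Wu in the form given in \cite[Section~4, Proposition~13]{Sze18}. The estimate is reduced to a maximum-principle argument for the test function
\[
G=\log\lambda_1+\phi\bigl(|\nabla u|_\chi^2\bigr)+\psi(u),
\]
where $\lambda_1$ is the largest eigenvalue of $g$ relative to $\chi$. At points where $\lambda_1$ has multiplicity, we perturb $g$ by a small constant diagonal matrix in the chosen frame, as Sz\'ekelyhidi does. The auxiliary functions are
\[
\phi(t)=-\tfrac12\log\Bigl(1-\tfrac{t}{2K}\Bigr),\qquad K=1+\sup_X|\nabla u|^2,
\]
and $\psi(t)=-A\log(1+t/L)$ after normalizing $\inf u=0$, with $A$ large. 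Since $\tr_\chi g\ge-B$, we have $|\lambda_i|\le C\lambda_1$ whenever $\lambda_1\ge1$. The aim is therefore to show $\lambda_1\le CK$ at a maximum point of $G$. This gives the stated bound on $\ddbar u$.

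At the maximum point $x_0$, we fix $\chi$-unitary coordinates in which $g$ is diagonal. We apply the linearized operator $\sum_pF^{p\bar p}\partial_p\partial_{\bar p}$ to $G$.

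\emph{Third-order terms.} For $\log\lambda_1$, the commutation formulas for Chern derivatives give $g_{1\bar1,p\bar p}=g_{p\bar p,1\bar1}+(\text{torsion/curvature terms})$. Hypothesis (ii), with the direction $1$ in place of $p$, controls $\sum_pF_pg_{p\bar p,1\bar1}$ from below. It yields the good term $-\sum_{i\neq j}\frac{F_i-F_j}{\lambda_i-\lambda_j}|g_{i\bar j,1}|^2\ge0$ plus the concavity term from the second variation of $\lambda_1$. Together these absorb the bad term $-F_p|g_{1\bar1,p}|^2/\lambda_1^2$, exactly as in \cite{Sze18}. This uses the standard split into the index sets $I=\{p:F_p\ge\delta^{-1}F_1\}$ and its complement.

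\emph{Gradient term.} Hypothesis (i) handles the first-order terms coming from $|\nabla u|^2$. It bounds the term $2\mathrm{Re}\sum_kF_pg_{p\bar p,k}u_{\bar k}$ by $C_0\FF K^{1/2}$. The remaining terms give the positive contribution $\phi'\sum_pF_p(|u_{kp}|^2+|u_{k\bar p}|^2)$, which includes $\phi'F_1\lambda_1^2$ up to lower-order terms.

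\emph{Third-order torsion terms.} The torsion terms linear in third derivatives are bounded by $C\FF+\epsilon\sum_pF_p|g_{1\bar1,p}|^2/\lambda_1^2$ and absorbed.

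\emph{The term $\psi$ and the final dichotomy.} We get $\psi'\sum_pF_pu_{p\bar p}=\psi'\sum_pF_p(\lambda_p-\beta_{p\bar p})$ with $\psi'<0$, so this term equals $|\psi'|\sum_pF_p(\beta_{p\bar p}-\lambda_p)$. We now use hypothesis (iii).

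\emph{Case 1:} the first alternative $\sum_pF_p(\beta_{p\bar p}-\lambda_p)\ge\vartheta\FF$ holds. Then $\psi$ produces $A\vartheta\FF/(2L)$. Choosing $A$ large dominates all the $-C\FF$ errors, and at the maximum point this yields $\lambda_1\le CK$.

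\emph{Case 2:} the second alternative $F_1\ge\vartheta\FF$ holds. Then the gradient term contributes $\phi'F_1\lambda_1^2\ge c\vartheta\FF\lambda_1^2/K$, which beats $C\FF(1+|\psi'|\,|\nabla\psi|\ldots)$ unless $\lambda_1\le CK$.

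Since $\phi'\sim1/K$ and $|\nabla u|^2\le K$, all constants depend only on $(X,\chi)$, $\|\beta\|_{C^2}$, $L$, $B$, $C_0$ and $\vartheta$.

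We expect the main obstacle to be the third-order analysis: absorbing $F_p|g_{1\bar1,p}|^2/\lambda_1^2$ using only the abstract inequality (ii) rather than an explicit operator $F$. We would follow Sz\'ekelyhidi's proof verbatim. That argument only invokes concavity through the divided-difference inequality and the ordering $F_1\le\cdots\le F_n$, both of which are provided as hypotheses here. The perturbation of $\lambda_1$ needed for smoothness at repeated eigenvalues is handled as in \cite{Sze18}.
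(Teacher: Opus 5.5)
Your overall route is the paper's own. The paper proves this lemma by citing \cite[Section~4, Proposition~13]{Sze18}: the Hou--Ma--Wu quantity $\log\lambda_1+\phi(|\nabla u|^2)+\psi(u)$, closed off by the dichotomy (iii). The gap is where you depart from Sz\'ekelyhidi. You take the K\"ahler choice $\psi(t)=-A\log(1+t/L)$ and also estimate the torsion crudely, and these two choices are incompatible. For your $\psi$, $(\psi')^2/\psi''=A$ while $|\psi'|\le A/L$. Fix $p\in I$, so $F_p$ is not controlled by $F_1$. On a Hermitian manifold the commutator $g_{1\bar1,p\bar p}-g_{p\bar p,1\bar1}$ contains $2\mathrm{Re}\bigl(\overline{T^1_{1p}}\,g_{1\bar1,p}\bigr)$, besides terms in $g_{1\bar q,p}$, $q\ne1$, which the second variation of $\lambda_1$ absorbs. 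Moreover, (ii) in direction $1$ controls $|g_{p\bar1,1}|^2$, not $|g_{1\bar1,p}|^2$, and $g_{p\bar1,1}=g_{1\bar1,p}-T^1_{1p}\lambda_1+O(1)$. Both effects produce terms of size $F_p|g_{1\bar1,p}|/\lambda_1$.

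The concavity coefficient $a=\frac{(F_p-F_1)\lambda_1}{F_p(\lambda_1-\lambda_p)}$ is in general only $\ge\frac{1-\delta}{1+\delta}$, so nothing is left over to absorb your $\epsilon F_p|g_{1\bar1,p}|^2/\lambda_1^2$. That term must go through $G_p=0$ into $\psi''F_p|u_p|^2$, which forces $\epsilon\le c\,\psi''/(\psi')^2=c/A$ for a small constant $c$. Your ``$C\FF$'' is therefore really $C\epsilon^{-1}\FF$, of order $A\FF$. Equivalently, absorbing $|\psi'|F_p|u_p|$ into $\psi''F_p|u_p|^2$ costs $(\psi')^2F_p/\psi''=AF_p$. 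This grows in $A$ at the same rate as the Case~1 gain $\vartheta|\psi'|\FF\approx A\vartheta\FF/L$. So ``choosing $A$ large dominates all the $-C\FF$ errors'' fails unless $\vartheta$ is large compared with $L$.

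There are two repairs. The first is to use Sz\'ekelyhidi's $\psi(t)=D_1e^{-D_2t}$, for which $(\psi')^2/\psi''=|\psi'|/D_2$. The torsion cost is then $C|\psi'|\FF/D_2$, which $\vartheta|\psi'|\FF$ beats once $D_2$ is large compared with $1/\vartheta$; then take $D_1$ large and $\delta$ small. The second is to keep your $\psi$ but combine the two torsion contributions before estimating. Put $x_p=g_{1\bar1,p}/\lambda_1$, $z_p=g_{p\bar1,1}/\lambda_1$ and $T=T^1_{1p}$. Then $x_p=z_p+T+O(\lambda_1^{-1})$, and the commutator contributes $2\mathrm{Re}(\overline{T}x_p)-|T|^2$. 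So the total coefficient of $F_p$ is $a|z_p|^2+2\mathrm{Re}(\overline{T}x_p)-|T|^2-|x_p|^2=(a-1)|z_p|^2$. The linear terms cancel exactly, and only $\delta\le c/A$ is then needed.

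Separately, your final split according to (iii) alone is incomplete. The bound $\frac{F_p-F_1}{\lambda_1-\lambda_p}\ge\frac{1-\delta}{1+\delta}\frac{F_p}{\lambda_1}$ on $I$ requires $\lambda_n\ge-\delta\lambda_1$. When $\lambda_n<-\delta\lambda_1$ you need Sz\'ekelyhidi's separate case. It uses $\phi'F_n\lambda_n^2\ge\delta^2\FF\lambda_1^2/(4nK)$, coming from $F_n\ge\FF/n$, and does not use (iii).
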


With this powful tool, we can show the $C^2$-estimate below:
\begin{proposition}\label{res:5-1}
Let $\underline\varphi$ be a smooth $\mathcal C$-subsolution of
\eqref{eq:radial-rho}, and let $\varphi$ be a smooth solution. Then
\begin{equation}\label{eq:96}
\|\ddbar\varphi\|_{C^0(X,\chi)}
\le C\left(1+\sup_X|\nabla\varphi|_\chi^2\right).
\end{equation}
The constant depends on the fixed data, $\underline\varphi$,
$\min_X\rho$, $\|\rho\|_{C^2}$, and the constants in
\eqref{eq:81}. It is uniform for compact smooth families of positive
$\rho$ with a common subsolution.
\end{proposition}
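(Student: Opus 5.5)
The plan is to apply Lemma~\ref{lem:szekelyhidi-differential-estimate} with $\beta=\omega_0$, $u=\varphi$, and an operator built from the radial scale. The natural choice is the concave function
\[
F(A,z)=\rho(z)^{-1}\kappa_z(\lambda(A))\qquad\text{or equivalently}\qquad \kappa_z(\lambda(A))-\rho(z),
\]
with $F^{i\bar j}=\partial F/\partial g_{i\bar j}$. By Lemma~\ref{lem:radial-scale}, $\kappa_z$ is smooth, concave, homogeneous of degree one and strictly increasing. Hence $F_i=\partial_i\kappa_z(\lambda)>0$, and the standard eigenvalue-function formula makes the divided differences $(F_i-F_j)/(\lambda_i-\lambda_j)$ nonpositive. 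The ordering $F_1\le\cdots\le F_n$ for $\lambda_1\ge\cdots\ge\lambda_n$ follows from symmetry and concavity. The hypotheses $\osc\varphi\le L$ and $\tr_\chi g\ge -B$ come from Lemma~\ref{lem:C0-estimate} and $\Lambda_\varphi\in\Gamma_d\subset\Gamma_1$. Homogeneity gives $\FF\ge\sum_iF_i\cdot 1=\kappa_z(\lambda+\one)-\kappa_z(\lambda)$. A cleaner route is Euler's relation $\sum F_i\lambda_i=\kappa=\rho$ combined with concavity, which yields $\FF\ge \kappa_z(\one)>0$ uniformly in $z$. Thus $\FF$ is bounded below, and additive errors $C$ can be absorbed into $C\FF$.

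Conditions (i) and (ii) are obtained by differentiating the equation $\kappa_z(\Lambda_\varphi)=\rho(z)$ once and twice in Chern coordinates. Differentiating once gives
\[
\sum_iF_ig_{i\bar i,p}=\partial_p\rho-(\partial_{z_p}\kappa)(z,\lambda),
\]
where the last term is the derivative in the $z$-variable through the coefficients $a_k(z)$ together with the $\chi$-frame dependence. By homogeneity, $|\partial_z\kappa_z(\lambda)|\le C|\lambda|\cdot\sup_{|\mu|=1}|\partial_z\kappa|$. This term is the delicate one: it grows linearly in $|\lambda|$ rather than being bounded by $C\FF$. The fix is to use that $\partial_z\kappa_z(\lambda)$ is homogeneous of degree one and that, on the level set $\kappa=\rho$, one has $\partial_z\kappa=-\partial_zP_z(x)/(x\cdot\nabla P_z(x))$ with $x=\lambda/\rho$. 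Theorem~\ref{thm relative position} bounds the denominator below by $\min_z r_0p_z'(r_0)$. The numerator $\sum_k \partial_za_k S_k(x)$ has degree at most $d-1$, and degree-$(d-1)$ quantities are controlled by $\partial_iP$, i.e.\ by $|\lambda|$ times the $F_i$. I would first try to prove the bound $|\partial_z\kappa_z(\lambda)|\le C(1+\FF)$ on $\kappa=\rho$, possibly using $\tr_\chi g\ge -B$. If that fails, I would switch to a slightly larger error allowance, $C\FF(1+\lambda_1)$, since Sz\'ekelyhidi's argument tolerates that: the test function involves $\log\lambda_1$, which divides by $\lambda_1$.

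For the second derivative, concavity of $\kappa$ in $\lambda$ gives the term $-\sum_{i\ne j}\frac{F_i-F_j}{\lambda_i-\lambda_j}|g_{i\bar j,p}|^2$ plus a nonpositive $\partial^2_\lambda\kappa$ term, which can be dropped with the favorable sign. Mixed terms $\partial_z\partial_\lambda\kappa\cdot\nabla g$ and $\partial_z^2\kappa$ remain. The joint concavity of Section~\ref{sec:joint-concavity} is designed for exactly this: $\kappa$ is jointly concave in $(\lambda,\s)$, lifted via Proposition~\ref{prop:matrix}. Writing $\s=\s(z)$, which is smooth by Assumption~\ref{ass1}(ii), the Hessian of $\kappa$ in $(\lambda,\s)$ is nonpositive. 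So the cross terms combine with the $\lambda\lambda$ part into a nonpositive quadratic form, leaving only $\partial_\s\kappa\cdot\partial^2\s$. This is again first order in $\kappa$ and is handled as above, and the Chern torsion terms are standard.

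Finally, condition (iii) comes from the subsolution via Proposition~\ref{prop:subsolution}(iii) and Sz\'ekelyhidi's Proposition 6. The slice $\{\kappa_z=\rho\}\cap(\Lambda_{\underline\varphi}-2\delta\one+\Gamma_n)$ is bounded by $R$. By concavity of $\kappa$, if $|\lambda|>R$, then $\sum_iF_i(\Lambda_{\underline\varphi,i}-\lambda_i)\ge\vartheta\FF$. Otherwise $\lambda$ lies in a compact subset of $\{\kappa=\rho\}$, where all $F_i\ge\vartheta\FF$ by smoothness and strict monotonicity. To match the lemma's form, I would replace $\beta$ by $\omega_0+\ddbar\underline\varphi$ and $u$ by $\varphi-\underline\varphi$. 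Uniformity in $\rho$ follows from the uniform constants in Proposition~\ref{res:3-1}. The main obstacle is the $z$-dependence error in step (i), together with its second-order analogue.
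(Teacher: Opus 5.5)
Your frame matches the paper's: Lemma~\ref{lem:szekelyhidi-differential-estimate} with $u=\varphi-\underline\varphi$ and $\beta=\omega_0+\ddbar\underline\varphi$, a positive lower bound on $\FF$, and condition (iii) from the bounded slices. Your bound $\FF\ge\kappa_z(\one)$ is correct; the paper instead normalizes $\FF=1$. However, the two steps that are specific to non-constant coefficients are not established.

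\textbf{Condition (i).} You only propose to \emph{try} to prove $|\partial_z\kappa_z(\lambda)|\le C(1+\FF)$. The remark that degree-$(d-1)$ quantities are controlled by $\partial_iP$ is not an argument. The fallback $C\FF(1+\lambda_1)$ is not admissible, since hypothesis (i) of the lemma requires $C_0\FF$. Your justification also does not apply: in Sz\'ekelyhidi's argument the first-derivative term is paired with $\nabla u$ in the gradient part of the test function, not divided by $\lambda_1$. The missing ingredient is Lemma~\ref{lem gradient P for}: in root coordinates, $-\partial_{s_j}P_{\s}\ge0$ on $\overline{\mathcal{C}_{P_{\s}}}$ and $-\sum_j\partial_{s_j}P_{\s}=\D P_{\s}$. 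Take $x=\lambda/\kappa_z(\lambda)\in\partial\mathcal{C}_{P_z}$ and use $\partial_i\kappa_z(\lambda)=\partial_iP_z(x)/(x\cdot\nabla P_z(x))$. Then
\[
|\partial_z\kappa_z(\lambda)|
=\kappa_z(\lambda)\,\frac{\bigl|\sum_\alpha\partial_{s_\alpha}P_{\mathbf r(z)}(x)\,\partial_zr_\alpha(z)\bigr|}{x\cdot\nabla P_z(x)}
\le\kappa_z(\lambda)\,\max_\alpha|\partial_zr_\alpha|\,\frac{\D P_z(x)}{x\cdot\nabla P_z(x)}
=\kappa_z(\lambda)\,\max_\alpha|\partial_zr_\alpha|\,\FF .
\]
This is exactly where smoothness of the roots (Assumption~\ref{ass1}(ii)) is needed. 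The paper builds this in by writing the equation as $P_{\s}(\Lambda_\varphi)=0$ with $\s=\rho\mathbf r$ and dividing by $\D P_{\s}$. The $z$-error in \eqref{eq:112} is then at most $\max_j|(s_j)_p|$.

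\textbf{Condition (ii).} Write $\kappa(\lambda;\s)$ for the radial scale of $P_{\s}$, so $\kappa_z=\kappa(\cdot;\mathbf r(z))$. Your claim that $\kappa$ is jointly concave in $(\lambda,\s)$ is false. By the computation in Lemma~\ref{lem kappa est}, $\kappa(x\one;\s)=x/s_0$, whose Hessian in $(x,s_0)$ has determinant $-s_0^{-4}<0$. In fact $\kappa$ is jointly homogeneous of degree $0$ and only quasi-concave. So the mixed terms $\partial_z\partial_i\kappa\,g_{i\bar i,p}$, which are third order in $\varphi$, are not absorbed. Moreover, with the normalization $\kappa(\lambda;\mathbf r(z))=\rho(z)$, the vector $(g_{i\bar i,p},\mathbf r_p)$ has $D\kappa[\,\cdot\,]=\rho_p$, so it is not tangent to a level set either.

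What is true, and suffices, is concavity in tangent directions once $\rho$ is absorbed into the parameters. Take $\s=\rho\mathbf r$; your first choice $\rho^{-1}\kappa_z(\lambda)=\kappa(\lambda;\rho\mathbf r)$ would do this. Then \eqref{eq:112} says $DP_{\s}[V_p]=0$ for $V_p=(g_{i\bar i,p},\s_p)$. Lemma~\ref{lem:radial-boundary-derivatives}, derived from the joint concavity of $P_{\s}^{1/d}$ on the total component (Proposition~\ref{prop concave pol}), gives $D^2P_{\s}[V,V]\le0$ for the real and imaginary parts of $V_p$. This disposes of the $\partial_i\partial_{s_\alpha}P_{\s}$ and $\partial^2_{s}P_{\s}$ terms. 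The leftover $\sum_j\partial_{s_j}P_{\s}\,(s_j)_{p\bar p}/\D P_{\s}$ is again bounded by \eqref{eq:smonotone}.

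\textbf{A minor point in (iii).} For $|\lambda|>R$, Sz\'ekelyhidi's Proposition~6 yields only the dichotomy, not the first alternative alone. The paper proves the dichotomy directly in Lemma~\ref{res:5-2}, together with the Schur--Horn step of Remark~\ref{res:5-3}.
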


We need only to check the three conditions in the above lemma.

For $\mathbf s=(s_0,\ldots,s_{d-1})$ with
$s_0\ge\cdots\ge s_{d-1}$, write
\begin{equation}\label{eq:149}
\begin{gathered}
p_{\mathbf s}(x)=d!\int_{s_0}^x\int_{s_1}^{t_1}\cdots
\int_{s_{d-1}}^{t_{d-1}}dt_d\cdots dt_1,\qquad
P_{\mathbf s}=\Pol_n(p_{\mathbf s}),\\
D_{\one}P_{\mathbf s}=\sum_i\partial_iP_{\mathbf s}.
\end{gathered}
\end{equation}
A monic right-Noetherian polynomial is determined by its largest
derivative roots, by successive integration. Thus
\begin{equation}\label{eq:scaled-roots}
s_j(z)=\rho(z)r_j(z),\qquad P_{\rho,z}=P_{\mathbf s(z)}.
\end{equation}

\begin{lemma}\label{lem gradient P for}
On $\{(\lambda,\s):\s\in\K,
                    \lambda\in \overline{C_{ P_{\s}}}\}$, we have that:
\begin{equation}\label{eq:smonotone}
 -\partial_{s_j}P_{\s}\ge0\quad(0\le j<d),\qquad
 -\sum_{j=0}^{d-1}\partial_{s_j}P_{\s}=\D P_{\s}.
\end{equation} 
\end{lemma}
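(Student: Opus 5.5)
The two claims are of different nature, and I will handle them in reverse order, since the identity is purely algebraic and holds everywhere, while the sign statement needs the component.

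\emph{The identity.} $p_{\s}$ is translation covariant: if every root parameter is shifted by $h$, then $p_{\s+h\one}(x)=p_{\s}(x-h)$. This is immediate from \eqref{eq:iterated} by the substitution $t_k\mapsto t_k+h$. Polarization is linear and commutes with diagonal translation, because $S_k$-expansions of $\Pol_n(p(\cdot-h))$ agree with $\Pol_n(p)(\lambda-h\one)$; both sides are symmetric multi-affine and agree on the diagonal. Hence
\[
P_{\s+h\one}(\lambda)=P_{\s}(\lambda-h\one).
\]
Differentiating in $h$ at $h=0$ gives $\sum_j\partial_{s_j}P_{\s}=-\D P_{\s}$ on all of $\R^n\times\K$.

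\emph{The sign of $-\partial_{s_j}P_{\s}$.} Differentiating the iterated integral \eqref{eq:iterated} with respect to the lower limit $s_j$, only the $j$-th integral depends on $s_j$. The result is
\[
\partial_{s_j}p_{\s}(x)=-d!\,\frp_j(x;s_0,\ldots,s_{j-1})\big|_{t_j=s_j}\cdot\frp_{d-j-1}(s_j;s_{j+1},\ldots,s_{d-1}).
\]
The second factor vanishes by definition of the lower limit. More carefully, the inner block evaluated at $t_j=s_j$ is $\int_{s_{j+1}}^{s_j}\cdots$, which is $p_{\s}^{(j+1)}(s_j)/d!\ge0$. So
\[
-\partial_{s_j}p_{\s}=c_j\,q_j(x),\qquad c_j=p^{(j+1)}_{\s}(s_j)/(d-j-1)!\ \ge0,
\]
where $q_j$ is the polynomial $\frp_j(x;s_0,\ldots,s_{j-1})$ with the innermost integrand replaced by the constant 1. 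That is, $q_j$ is a monic right-Noetherian polynomial of degree $j$ with roots $(s_0,\ldots,s_{j-1})$, up to the factor $d!/j!$. For $j=0$ this reads $-\partial_{s_0}p_{\s}=p'_{\s}(s_0)\ge0$, a constant.

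It then remains to show $\Pol_n(q_j)\ge0$ on $\overline{\CC_{P_{\s}}}$. I plan to do this by domination: $q_j$ has root sequence $(s_0,\ldots,s_{j-1})$, while $p_{\s}^{(d-j)}$ has root sequence $(s_{d-j},\ldots,s_{d-1})$. Comparing instead with the derivatives of $p_{\s}$, the entrywise ordering $s_i\ge s_{i+d-j}$ together with Theorem~\ref{UpsilonDominance} gives
\[
\CC_{\Pol_n(q_j)}\supseteq\CC_{\Pol_n(p_{\s})}.
\]
Here I use that $p_{\s}$ dominates $q_j$ because each derivative root of $q_j$ is at most the corresponding root of $p_{\s}$. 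Passing to closures yields $\Pol_n(q_j)\ge0$ on $\overline{\CC_{P_{\s}}}$, hence $-\partial_{s_j}P_{\s}=c_j\Pol_n(q_j)\ge0$.

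\emph{Main obstacle.} The delicate step is checking the domination hypothesis of Definition~\ref{def dominate} for polynomials of different degree. Concretely, I must verify that $\CC_{\partial^\alpha p_{\s}}\subset\{\partial^\alpha q_j>0\}$ for the univariate derivatives. This reduces to $\{x>s_k\}\subset\{q_j^{(k)}>0\}$, which holds because the largest root of $q_j^{(k)}$ is $s_k$ for $k<j$. If the different-degree issue is problematic, my fallback is to use $\CC_{P_{\s}}\subset\CC_{\D^{d-j}P}$-type inclusions and Proposition~\ref{prop concave pol}.
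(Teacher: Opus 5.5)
Your proposal is correct and follows the paper's proof essentially step for step: translation covariance $P_{\s+h\one}(\lambda)=P_{\s}(\lambda-h\one)$ gives the identity, and differentiating in the lower limit $s_j$ gives $-\partial_{s_j}p_{\s}=p_{\s}^{(j+1)}(s_j)\,\frp_j(x;s_0,\ldots,s_{j-1})$, after which the domination $p_{\s}\triangleright j!\,\frp_j(\cdot;s_0,\ldots,s_{j-1})$, Theorem~\ref{UpsilonDominance}, and continuity finish the sign claim. Only cosmetic slips need cleaning up, and none affects the sign argument. The factor $\frp_{d-j-1}(s_j;s_{j+1},\ldots,s_{d-1})$ need not vanish, as you yourself then correct. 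Your constants $c_j$ and ``$d!/j!$'' are off, since the correct scalar in front of $\frp_j$ is simply $p^{(j+1)}_{\s}(s_j)\ge0$. Finally, the comparison with $p_{\s}^{(d-j)}$ plays no role in the argument.
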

\begin{proof}
Differentiating the lower integration limit corresponding to $s_i$ gives
\begin{equation}\label{eq:parameterderivative}
 -\partial_{s_i}p_{\s}(x)=C_i p_i(x),\qquad
 p_i(x)=i!\frp_i(x;s_0,\ldots,s_{i-1}),
\end{equation}
where $p_0=1$ and
\begin{equation}\label{eq:Ci}
 C_i=\frac{d!}{i!}\frp_{d-i-1}(s_i;s_{i+1},\ldots,s_{d-1})\ge0,
 \qquad C_{d-1}=d.
\end{equation}
The nonnegativity follows from the ordering of the integration limits.
For $k<i$, the largest root of $p_i^{(k)}$ is $s_k$, the same as that of
$p_{\s}^{(k)}$; also $p_i^{(i)}=i!>0$. This implies that $p_{\s} \triangleright p_i$. Since polarization preserves domination according to Theorem \ref{UpsilonDominance}, we have that:
\[
 -\partial_{s_i}P_{\s}
       =C_i\Pol_n(p_i)\ge0\quad\hbox{on }\mathcal{C}_{P_{\s}}.
\]
When $C_i=0$, this inequality is the identity $0=0$. For fixed $\s$, continuity extends the inequalities to
$\overline{\mathcal{C}_{P_{\s}}}$. The translation identity below holds
for every $(\lambda,\s)$.

Simultaneous translation of all limits and the integration variable gives
\begin{equation}\label{eq:translation}
 p_{\s+t\one}(x+t)=p_{\s}(x),\qquad
 P_{\s+t\one}(\lambda+t\one)=P_{\s}(\lambda).
\end{equation}
The vectors $\one_d$ and $\one_n$ act on the root parameters and eigenvalues, respectively. Differentiation at $t=0$ yields
$\sum_i\partial_{s_i}P_{\s}+\D P_{\s}=0$.
\end{proof}

\begin{lemma}\label{lem:radial-boundary-derivatives}
On $\{(\lambda,\s):\s\in\K,
                    \lambda\in \overline{C_{ P_{\s}}}, s_0 > s_1 \}$, for every real vector $V\in\R^n\times\R^d$, we have that:
\begin{equation}\label{eq:tangent-concavity}
D_{(\lambda,\mathbf s)}P_{\mathbf s}[V]=0
\quad\Longrightarrow\quad
D^2_{(\lambda,\mathbf s)}P_{\mathbf s}[V,V]\le0.
\end{equation}
\end{lemma}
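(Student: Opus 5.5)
We derive \eqref{eq:tangent-concavity} from the joint concavity of the quotient $\Theta$ in Proposition~\ref{prop:Theta}. Fix $(\lambda,\s)$ in the stated set and write $P=P_{\s}(\lambda)$ and $Q=\D P_{\s}(\lambda)$.

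\textbf{Step 1: the point lies in the concavity domain.} Since $\lambda\in\overline{\mathcal{C}_{P_{\s}}}$ and $s_0>s_1$, Lemma~\ref{lem:component-closure}(ii) gives $\overline{\mathcal{C}_{P_{\s}}}\subset\mathcal{C}_{P_{\s}}^{(1)}$. Lemma~\ref{lem:component-closure} was stated for the family $p_z$, but its proof only uses strict right-Noetherianity, which is exactly $s_0>s_1$. From $\mathcal{C}_{P_{\s}}^{(1)}$ we get $\partial_iP_{\s}(\lambda)>0$ for every $i$, hence $Q>0$. Moreover $\mathcal{C}_{P_{\s}}^{(1)}\subset\mathcal{C}_{\D P_{\s}}$. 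To see this, apply the characterization \eqref{eq:7} together with the domination argument: $\D P_{\s}=\Pol_n(p_{\s}')$, whose component is the set where all $\partial_I$ of it are positive, and $\partial_I\D P=\sum_{i\notin I}\partial_{I\cup\{i\}}P>0$. Alternatively, use Theorem~\ref{UpsilonDominance} with $p_{\s}\triangleright p_{\s}'$. So $(\lambda,\s)$ lies in the open domain $\{\s\in\K,\ \lambda\in\mathcal{C}_{\D P_{\s}}\}$. There $\Theta$ is smooth and jointly concave, hence $D^2\Theta[V,V]\le0$ for all $V$. One subtlety is that $\s$ may lie on the boundary of $\K$, where $s_i=s_{i+1}$ for some $i\ge1$. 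In that case I would either note that $P_{\s}$ is a polynomial in $\s$ on all of $\R^d$, or restrict to directions $V$ tangent to $\K$. The cleaner route is to check that the concavity of $\Theta$ in Proposition~\ref{prop:Theta} extends to a neighborhood in the relevant directions by continuity of second derivatives. Concavity on a convex set with nonempty interior near the point gives $D^2\Theta\le0$ at boundary points by continuity of $D^2\Theta$, since $\K$ has nonempty interior.

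\textbf{Step 2: the quotient rule.} Compute
\[
D^2\Theta[V,V]=\frac{D^2P[V,V]}{Q}-\frac{2\,DP[V]\,DQ[V]}{Q^2}-\frac{P\,D^2Q[V,V]}{Q^2}+\frac{2P\,(DQ[V])^2}{Q^3}.
\]
The key observation is that we must not assume $P=0$, since $\lambda$ may lie in the interior of $\overline{\mathcal{C}_{P_{\s}}}$. The inequality \eqref{eq:tangent-concavity} as stated concerns only $DP[V]=0$. If $P>0$, the terms with $P$ do not obviously have a sign, so this step needs care.

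\textbf{Step 3: handling $P>0$.} Here I would use the radial/product structure instead. For $P>0$, on $\mathcal{C}_{P_{\s}}$ the function $P^{1/d}$ is jointly concave in $(\lambda,\s)$ by Proposition~\ref{prop concave pol} with $i=0$ and $j=n$. Then $D^2(P^{1/d})\le0$ gives
\[
D^2P[V,V]\le\frac{d-1}{d}\,\frac{(DP[V])^2}{P}.
\]
When $DP[V]=0$ this yields $D^2P[V,V]\le0$. For $P=0$ (the boundary case), Step 2 reduces to
\[
D^2\Theta[V,V]=\frac{D^2P[V,V]}{Q}\le0,
\]
and we conclude since $Q>0$. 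The main obstacle is justifying concavity up to the boundary of $\K$ and, for the $P^{1/d}$ route, up to $\lambda\in\partial\mathcal{C}$. The quotient route covers $P=0$ and the $P^{1/d}$ route covers $P>0$, so combining them handles every point. The remaining $\K$-boundary issue is settled by continuity of the polynomial's second derivatives in $\s$, approximating $\s$ by interior points of $\K$ with $s_0>s_1$ preserved and approximating $V$ accordingly.
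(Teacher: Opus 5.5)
Your proposal is correct. For points with $P_{\s}(\lambda)>0$ it is exactly the paper's argument: joint concavity of $P_{\s}^{1/d}$ from Proposition~\ref{prop concave pol}, restricted to directions with $DP_{\s}[V]=0$.

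The two routes differ at boundary points $P_{\s}(\lambda)=0$. The paper stays with $P^{1/d}$. It approaches $(\lambda,\s)$ by interior points with strictly ordered roots, replaces $V$ there by the tangent vectors $V-\frac{DP_{\s}[V]}{\D P_{\s}}(\one,0)$, and passes to the limit using $\D P_{\s}>0$ at the limit point. You instead use the quotient concavity of Proposition~\ref{prop:Theta}. Its domain $\mathcal{C}_{\D P_{\s}}$ is open in $\lambda$ and already contains $\partial\mathcal{C}_{P_{\s}}$; this is where $s_0>s_1$ enters, via $\overline{\mathcal{C}_{P_{\s}}}\subset\mathcal{C}^{(1)}_{P_{\s}}\subset\mathcal{C}_{\D P_{\s}}$. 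So at a point with $P_{\s}(\lambda)=0$ and $DP_{\s}[V]=0$, the quotient rule gives $D^2P_{\s}[V,V]=\D P_{\s}(\lambda)\,D^2\Theta[V,V]\le0$ directly, with no limit in $\lambda$.

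What you gain is a pointwise argument at the boundary. What you pay is reliance on the heavier Proposition~\ref{prop:Theta}, which rests on Lemma~\ref{lem:quotient} and Theorem~\ref{thm pullback ide}. You also need a case split, since $\Theta$-concavity gives no sign information when $P>0$. The paper's route is uniform and needs only $P^{1/d}$-concavity together with $\D P_{\s}>0$ at the limit point.

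Your handling of $\s\in\partial\K$ is fine, and simpler than you suggest. The full Hessians of $\Theta$ and of $P^{1/d}$ are negative semidefinite at interior points of the respective convex domains. These interior points accumulate at your point, and the Hessians are continuous nearby, so no approximation of $V$ is needed.

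One small correction. The alternative you mention, Theorem~\ref{UpsilonDominance} with $p_{\s}\triangleright p_{\s}'$, only gives $\mathcal{C}_{P_{\s}}\subset\mathcal{C}_{\D P_{\s}}$. That does not cover the boundary points of $\mathcal{C}_{P_{\s}}$, which are exactly where you need the quotient. Keep your primary argument via \eqref{eq:7}, or simply note that $\mathcal{C}^{(1)}_{P_{\s}}$ is convex, $\D P_{\s}>0$ on it, and it contains large diagonal points.
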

\begin{proof}
Proposition~\ref{prop concave pol} gives joint
concavity of $P_{\mathbf s}^{1/d}$ on its positive component. At an
interior point, if $DP_{\mathbf s}[V]=0$, then
\[
D^2P_{\mathbf s}[V,V]
=dP_{\mathbf s}^{1-1/d}D^2(P_{\mathbf s}^{1/d})[V,V]\le0.
\]
Approach the given boundary point through positive components with
strictly ordered roots, and replace $V$ at each point by
\[
V-\frac{DP_{\mathbf s}[V]}{D_{\one}P_{\mathbf s}}(\one,0).
\]
These vectors are tangent and converge to $V$. Since
$D_{\one}P_{\mathbf s}>0$ at the limit, continuity proves
\eqref{eq:tangent-concavity}.
\end{proof}

At a solution of \eqref{eq:radial-rho}, put $\lambda=\Lambda_\varphi$
and $\mathbf s=\rho\mathbf r=(\rho(z)r_0(z),...,\rho(z)r_{d-1}(z))$. Define
\begin{equation}\label{eq:normalized-linearization}
F_i:=\frac{\partial_iP_{\mathbf s}(\lambda)}{D_{\one}P_{\mathbf s}(\lambda)},
\qquad F_i>0,\qquad \sum_iF_i=1.
\end{equation}
Write $g=\omega_\varphi$, $A=\chi^{-1}g$, and
$\widehat P(A,\mathbf s)=P_{\mathbf s}(\lambda(A))$. We use the corresponding
matrix coefficients $F^{i\bar j}$: in a $\chi$-unitary frame
diagonalizing $\omega_\varphi$, they are
\[
F^{i\bar j}
=\frac{\partial_{g_{i\bar j}}
\bigl[P_{\mathbf s}(\lambda(\chi^{-1}g))\bigl]}{D_\one P_{\mathbf s}}=F_i\delta_{ij}.
\]
The coefficients $F_i$ can also be expressed in terms of the derivatives of $\kappa_z$ as follows:
$P_{\kappa_z(\lambda)\mathbf r}(\lambda)=\kappa_z(\lambda)^dP_{\mathbf r}(\kappa_z(\lambda)^{-1}\lambda)=0$ since $\kappa_z(\lambda)^{-1}\lambda\in \partial \mathcal{C}_{P_z}$. All eigenvalue and parameter derivatives in the next three identities are taken
with $\lambda$ and $\s$ independent, and are then evaluated at
$\s=\kappa_z(\lambda)\mathbf r$. Differentiating the identity gives
\begin{equation}\label{eq:scale-differentiation}
\partial_iP_{\kappa_z(\lambda)\mathbf r}+\sum_\alpha r_\alpha\partial_{s_\alpha}P_{\kappa_z(\lambda)\mathbf r}\partial_{i}\kappa_z(\lambda)=0.
\end{equation}
Since $P_{\s}(\lambda)$ is jointly homogeneous of degree $d$ in $(\lambda,\s)$, 
\begin{equation}\label{eq:joint-homogeneity}
\sum_{\alpha}\kappa_z(\lambda)r_\alpha\partial_{s_\alpha}P_{\kappa_z(\lambda)\mathbf r}+\sum_{i}\lambda_i\partial_{i}P_{\kappa_z(\lambda)\mathbf r}=dP_{\kappa_z(\lambda)\mathbf r}(\lambda)=0.
\end{equation}
Multiply \eqref{eq:scale-differentiation} by $\kappa_z(\lambda)$ and subtract \eqref{eq:joint-homogeneity} multiplied by $ \partial_i \kappa_z(\lambda)$ to obtain
\begin{equation*}
    \kappa_z (\lambda) \partial_iP_{\kappa_z(\lambda)\mathbf r} = (\sum_{j} \lambda_j \partial_j P_{\kappa_z(\lambda)\mathbf r} )  \partial_i \kappa_z(\lambda)
 \end{equation*}
This implies that
\begin{equation}\label{eq:scale-linearization}
F_i=\frac{\partial_iP_{\kappa_z(\lambda)\mathbf r}}{\sum_j \partial_jP_{\kappa_z(\lambda)\mathbf r}}=\frac{\partial_i\kappa_z(\lambda)}{\sum_j\partial_j\kappa_z(\lambda)}.
\end{equation}

All derivatives below are Chern covariant derivatives. Differentiating
$\widehat P(A,\mathbf s)=0$ gives
\begin{equation}\label{eq:112}
F^{i\bar j}g_{i\bar j,p}+\sum_{j=0}^{d-1}
\frac{\partial_{s_j}P_{\mathbf s}}{D_{\one}P_{\mathbf s}}(s_j)_p=0.
\end{equation}
A second differentiation gives
\begin{equation}\label{eq:114}
\begin{aligned}
F^{i\bar j}g_{i\bar j,p\bar p}+\mathcal J_p+\sum_{j=0}^{d-1}
\frac{\partial_{s_j}P_{\mathbf s}}{D_{\one}P_{\mathbf s}}(s_j)_{p\bar p}=0,
\end{aligned}
\end{equation}
where \[
\mathcal J_p:=\frac{D^2\widehat P[(A_{,p},\mathbf s_p),
(A_{,\bar p},\mathbf s_{\bar p})]}{D_{\one}P_{\mathbf s}}.
\]
The first and second order derivatives of $\mathbf s=\rho\mathbf r$ are bounded in terms
of the fixed data and $\|\rho\|_{C^2}$. Hence, we can use Lemma \ref{lem gradient P for} to get that:
\begin{equation}\label{eq:radial-differentiated-bounds}
\left|F^{i\bar j}g_{i\bar j,p}\right|\le C,\qquad
F^{i\bar j}g_{i\bar j,p\bar p}\ge-\mathcal J_p-C.
\end{equation}

Using (\ref{eq:112}), the vector
$V_p=(g_{1\bar1,p},\ldots,g_{n\bar n,p},\mathbf s_p)$
satisfies $DP_{\mathbf s}[V_p]=0$. Applying
\eqref{eq:tangent-concavity} to its real and imaginary parts and
using the spectral second-derivative formula
\cite[\S4, equations (65)--(67)]{Sze18}, we obtain
\begin{equation}\label{eq:117}
\begin{aligned}
(D_{\one}P_{\mathbf s})\mathcal J_p
&=\sum_{i,j}(\partial^2_{ij}P_{\mathbf s})g_{i\bar i,p}g_{j\bar j,\bar p}+2\operatorname{Re}\sum_{i,\alpha}(\partial_{i}\partial_{s_\alpha}P_{\mathbf s})g_{i\bar i,p}s_{\alpha,\bar p}+\sum_{\alpha,\beta}(\partial^2_{s_{\alpha}s_{\beta}}P_{\mathbf s})s_{\alpha,p}s_{\beta,\bar p}
+(D_{\one}P_{\mathbf s})\sum_{i\ne j}\frac{F_i-F_j}{\lambda_i-\lambda_j}|g_{i\bar j,p}|^2\\
&\le (D_{\one}P_{\mathbf s})\sum_{i\ne j}\frac{F_i-F_j}{\lambda_i-\lambda_j}|g_{i\bar j,p}|^2\le0.
\end{aligned}
\end{equation}
The last nonpositivity follows from symmetry and concavity of $P_{z}^{\frac{1}{d}}$. Those formulas make sense even when the eigenvalues are not distinct, see the discussion in \cite[Section 4]{Sze18}.

\begin{lemma}\label{res:5-2}
Under the hypotheses of Proposition~\ref{res:5-1}, put
$\underline A=\chi^{-1}(\omega_0+\ddbar\underline\varphi)$,
and let $\underline\lambda_1\ge\cdots\ge\underline\lambda_n$
be its eigenvalues. There is $\vartheta>0$, depending on $(\delta,R)$ from Proposition~\ref{prop:subsolution} and the fixed data on the manifold, such that, for every $z\in X$
and every ordered $\lambda$ with $\kappa_z(\lambda)=\rho(z)$,
at least one of the following holds:
\begin{equation}\label{eq:97}
\sum_iF_i(\underline\lambda_i-\lambda_i)>\vartheta;
\end{equation}
\begin{equation}\label{eq:98}
F_i>\vartheta\qquad(1\le i\le n).
\end{equation}
Here $F_i$ is given by
\eqref{eq:normalized-linearization}.
\end{lemma}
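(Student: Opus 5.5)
The argument follows Sz\'ekelyhidi's proof of \cite[Proposition 6]{Sze18}, applied to the normalized level function $\kappa_z$ rather than to $P$. By \eqref{eq:scale-linearization}, $F_i=\partial_i\kappa_z(\lambda)/\sum_j\partial_j\kappa_z(\lambda)$. So $F=(F_1,\dots,F_n)$ is the unit-sum normal to the level set $\{\kappa_z=\rho(z)\}=\rho(z)\partial\mathcal C_{P_z}$ at $\lambda$. By Lemma~\ref{lem:radial-scale}, $\kappa_z$ is concave, so this level set bounds the convex set $\rho(z)\overline{\mathcal C_{P_z}}$. For every $\mu\in\rho(z)\overline{\mathcal C_{P_z}}$ we therefore have the supporting inequality
\[
\sum_iF_i(\mu_i-\lambda_i)\ge0 .
\]
We argue by contradiction and compactness. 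Suppose there are $z_k\in X$ and ordered $\lambda^{(k)}$ with $\kappa_{z_k}(\lambda^{(k)})=\rho(z_k)$ such that both \eqref{eq:97} and \eqref{eq:98} fail with $\vartheta=1/k$. Passing to a subsequence, $z_k\to z$.

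\emph{Case 1: $\lambda^{(k)}$ stays bounded.} We pass to a limit $\lambda$ with $\kappa_z(\lambda)=\rho(z)$. By Lemma~\ref{lem:component-closure}, $\partial_iP>0$ on the boundary, so by \eqref{eq:normalized-linearization} each $F_i$ is continuous and positive there. Hence $\min_iF_i$ is bounded below uniformly, and \eqref{eq:98} holds for small $\vartheta$, which is a contradiction. To avoid a limiting argument, we can note that the set of pairs $(z,\lambda)$ with $\kappa_z(\lambda)=\rho(z)$ and $|\lambda|\le M$ is compact and apply continuity of $F_i$ on it.

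\emph{Case 2: $|\lambda^{(k)}|\to\infty$.} Here we use the subsolution. By Proposition~\ref{prop:subsolution}(iii) with the constants $\delta,R$, the slice
\[
\{\mu:\kappa_z(\mu)=\rho(z),\ \mu\ge\underline\lambda-\delta\one\}
\]
is contained in a ball of radius $R'$ that is uniform in $z$. Once $|\lambda|>R'+C$, the point $\lambda$ therefore has some coordinate with $\lambda_i<\underline\lambda_i-\delta$ (we may order both vectors decreasingly; see the comment at the end). We then take the point
\[
\mu=\underline\lambda-\delta\one+t e_n
\]
moved along a coordinate direction until it reaches $\rho(z)\partial\mathcal C_{P_z}$. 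For this step we use \eqref{eq:coordinate-rays}: since $\underline\lambda-2\delta\one\in\rho\,\mathcal C^{(1)}_{P_z}$, some finite $t$ does this, with a uniform bound on $t$. Alternatively we take $\mu$ to be the boundary point $\underline\lambda-\delta\one+y$ given by \eqref{eq:81}, with $|y|\le R$.

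Applying the supporting inequality at $\mu$ gives
\[
\sum_iF_i(\underline\lambda_i-\lambda_i)\ge\delta-\sum_iF_iy_i .
\]
This is not yet sufficient. The correct choice is Sz\'ekelyhidi's: when $\sum_iF_i(\underline\lambda_i-\lambda_i)\le\vartheta$ and some $F_j\le\vartheta$, the point $\underline\lambda-\delta\one+(\text{large})e_j$ lies in the region where the supporting hyperplane inequality, combined with the bounded slice, fails. Concretely, for $N$ large, the point $\underline\lambda-\delta\one+Ne_j$ lies in $\rho\,\mathcal C_{P_z}$ by \eqref{eq:coordinate-rays}. Then
\[
0\le\sum_iF_i(\underline\lambda_i-\delta-\lambda_i)+NF_j\le\vartheta-\delta+N\vartheta .
\]
This is a contradiction once $\vartheta<\delta/(1+N)$, where $N$ is chosen uniformly in $z$ by compactness and Proposition~\ref{res:3-1}.

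This argument yields the lemma directly, and Case 1 is not even needed. The main obstacle is the uniformity of $N$. We need a uniform $N$ such that $\underline\lambda(z)-\delta\one+Ne_j\in\rho(z)\mathcal C_{P_z}$ for all $z$ and $j$. We obtain it from $\underline\lambda-2\delta\one\in\rho\,\mathcal C^{(1)}_{P_z}$, the identity $P(\mu+Ne_j)=P(\mu)+N\partial_jP(\mu)$, and the positive lower bound $m$ for $\partial_jP$ from the proof of Proposition~\ref{prop:subsolution}, using PRT to absorb the extra $\delta\one$.

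A secondary point is the matching of frames and orderings between $F_i$ and $\underline\lambda_i$. Here we use that $F_i$ is increasing as $\lambda_i$ decreases, which follows from the concavity and symmetry of $\kappa_z$. Pairing with a rearrangement of $\underline\lambda$ (Schur/Ky Fan, Theorem~\ref{Fan-Lidskii}) only increases the sum $\sum_iF_i\underline\lambda_i$, so the ordered statement follows.
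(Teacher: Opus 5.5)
Your final argument (the one you attribute to Sz\'ekelyhidi, after the abandoned case analysis) is correct. It is a streamlined variant of the paper's proof rather than the same one.

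\textbf{What is shared.} Both rest on the supporting-hyperplane inequality $\sum_iF_i(x_i-\lambda_i)\ge0$ for $x\in\rho(z)\overline{\mathcal C_{P_z}}$. This comes from concavity of $\kappa_z$ together with $F\propto\nabla\kappa_z(\lambda)$.

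\textbf{How the routes differ.} Write $\mu=\underline\lambda(z)$ and $\nu=\nabla\kappa_z(\lambda)/|\nabla\kappa_z(\lambda)|$ (the paper's $N$). The paper uses the inequality contrapositively. Assuming $\nu\cdot(\mu-\lambda)<\delta$, it builds points $x_i=\mu-2\delta\one+(\ell/\nu_i)e_i$ lying \emph{on} the tangent hyperplane and shows $P_{\rho,z}(x_i)\le0$. It then pushes these points up the diagonal to $\rho(z)\partial\mathcal C_{P_z}$ and invokes the bounded slice \eqref{eq:81} to get $\ell/\nu_i\le R$, which bounds every $F_i$ from below.

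You test the inequality once, at a point $\underline\lambda-\delta\one+Ne_j$ known to lie \emph{inside} $\rho(z)\mathcal C_{P_z}$. This gives $\sum_iF_i(\underline\lambda_i-\lambda_i)\ge\delta-NF_j$ immediately. Your route removes the case split, the diagonal push, and the normalization $1\le\nu\cdot\one\le\sqrt n$.

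\textbf{The uniform $N$.} The price of your route is a uniform $N$. Your argument through a lower bound on $\partial_jP_{\rho,z}$ works, but you can read $N$ off \eqref{eq:81} directly. Along $t\mapsto\underline\lambda-2\delta\one+te_j$ the function $P_{\rho,z}$ is affine and increasing, and the path stays in $\mathcal C^{(1)}_{P_{\rho,z}}$. By \eqref{eq:175}, any zero on this path lies in $\rho(z)\partial\mathcal C_{P_z}$, so \eqref{eq:81} forces it to occur at some $t\le R$. Hence $P_{\rho,z}>0$ for $t>R$, and \eqref{eq:25} together with upward stability lets you take $N=R+1$. Then $\vartheta=\delta/(R+3)$ depends only on $(\delta,R)$, as the statement asserts.

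\textbf{What to cut.} In a write-up, drop the Case~1/Case~2 discussion and the closing paragraph on rearrangements; the direct argument needs neither. Since $\rho(z)\mathcal C_{P_z}$ is symmetric, your inequality already holds for the decreasingly ordered $\underline\lambda$. The rearrangement step belongs to Remark~\ref{res:5-3}, not to this lemma.
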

\begin{proof}
Fix $z$, write $\mu=\underline\lambda(z)$, and choose uniform
$\delta,R>0$ as in Proposition~\ref{prop:subsolution}, with
\begin{equation}\label{eq:101}
\{y\ge0:\mu-2\delta\one+y\in\rho(z)\partial C_{P_z}\}
\subset\{|y|\le R\}.
\end{equation}
By Proposition~\ref{prop:subsolution}(ii) and compactness, we also have $\mu-2\delta\one\in \mathcal{C}_{P_{\rho,z}}^{(1)}.$

Set $N=\nabla\kappa_z(\lambda)/|\nabla\kappa_z(\lambda)|$. Using Lemma~\ref{lem gradient P for}, we have that $N_i>0$ for any $i$. This implies that:
\begin{equation}\label{e n1 est}
1\le N\cdot\one\le\sqrt n.
\end{equation}
According to \eqref{eq:scale-linearization}, we have that 
\begin{equation}\label{e fini}
    F_i=\frac{N_i}{N\cdot\one}.
\end{equation}
If $N\cdot(\mu-\lambda)\ge\delta$, then
$\sum_iF_i(\mu_i-\lambda_i)\ge\delta/\sqrt n$.
Otherwise, put
\begin{equation}\label{e l lower}
    \ell=N\cdot(\lambda-\mu+2\delta\one)>\delta,
\qquad x_i=\mu-2\delta\one+\frac\ell{N_i}e_i.
\end{equation}
Since $\mathcal{C}^{(1)}_{P_{\rho,z}}+\overline{\Gamma}_n\subset \mathcal{C}^{(1)}_{P_{\rho,z}}$,  one gets $x_i\in \mathcal{C}_{P_{\rho,z}}^{(1)}$, and
$N\cdot(x_i-\lambda)=0$. If $P_{\rho,z}(x_i)>0$, then
\[
x_i\in \mathcal{C}_{P_{\rho,z}},\qquad
\rho(z)<\kappa_z(x_i)
\le\kappa_z(\lambda)+\nabla\kappa_z(\lambda)\cdot(x_i-\lambda)
=\rho(z),
\]
a contradiction. Thus $P_{\rho,z}(x_i)\le0$.
Strict increase along the diagonal gives $t_i\ge0$ with
$x_i+t_i\one\in\rho(z)\partial C_{P_z}$.
Applying \eqref{eq:101} to $y=\ell N_i^{-1}e_i+t_i\one$ yields
\[
\ell/N_i\le R.
\]
Combining this formula with (\ref{e n1 est}), (\ref{e fini}) and (\ref{e l lower}), we can get that:
$$
F_i= \frac{N_i}{N \cdot \one} \ge \frac{\ell}{R \sqrt{n}} >\frac\delta{R\sqrt n}.
$$
The alternatives follow with
$\vartheta=(2\sqrt n)^{-1}\min\{\delta,\delta/R\}$.
\end{proof}

\begin{remark}\label{res:5-3}
In a $\chi$-unitary frame diagonalizing $A$, order its eigenvalues
by $\lambda_1\ge\cdots\ge\lambda_n$. Symmetry and concavity of
$P_z^{\frac{1}{d}}$ give
\begin{equation}\label{eq:99}
F_1\le\cdots\le F_n.
\end{equation}
By the Schur--Horn theorem and the rearrangement inequality,
\[
\sum_iF_i\underline A_{i\bar i}\ge\sum_iF_i\underline\lambda_i.
\]
 In fact, the Schur--Horn theorem says that the vector $(\underline A_{1\bar{1}},\cdots,\underline A_{n\bar{n}})$ is contained in the convex hull of $(\underline\lambda_{\sigma(1)},\cdots,\underline\lambda_{\sigma(n)})$,  where $\sigma$ is a permutation of the indices.  Moreover,  since $F_i$ is in the increasing order,  $F_i \underline\lambda_i$ will be minimized if $\underline\lambda_i$ is in the decreasing order,  hence the inequality above. 

Consequently, either \eqref{eq:98} holds or
\begin{equation}\label{eq:100}
\sum_iF_i(\underline A_{i\bar i}-\lambda_i)>\vartheta.
\end{equation}
\end{remark}
\begin{proof}[Proof of Proposition~\ref{res:5-1}]
Apply Lemma~\ref{lem:szekelyhidi-differential-estimate} with
\[
u=\varphi-\underline\varphi,\qquad
\beta=\omega_0+\ddbar\underline\varphi,\qquad
g=\omega_\varphi.
\]
Lemma~\ref{lem:C0-estimate} bounds $\osc_Xu$, and
$\Lambda_\varphi\in\Gamma_d\subset\Gamma_1$ gives
$\tr_\chi g>0$. The normalized coefficients
\eqref{eq:normalized-linearization} satisfy $\FF=1$ and
$F_1\le\cdots\le F_n$ by \eqref{eq:99}.
Equations \eqref{eq:radial-differentiated-bounds} and \eqref{eq:117}
give conditions (i)--(ii) of the lemma. Lemma~\ref{res:5-2} and
Remark~\ref{res:5-3} give condition (iii).
Thus \eqref{eq:sze-hessian-estimate} holds for $u$.
Since $\underline\varphi$ is fixed, this is equivalent to the asserted
estimate \eqref{eq:96} for $\varphi$. The constants are uniform under
the stated bounds on the data and the subsolution constants.
\end{proof}

\subsection{The gradient estimate}\label{sec:source-5-3}
We apply the blow-up method of Dinew--Ko\l odziej~\cite{DK17}, 
in the viscosity formulation of \cite[Section 6]{Sze18} to show the gradient estimate.

\begin{lemma}\label{res:5-5}
Under the hypotheses of Proposition~\ref{res:5-1}, a smooth solution
of \eqref{eq:radial-rho} satisfies
\begin{equation}\label{eq:148}
\|\nabla\varphi\|_{C^0(X,\chi)}\le C.
\end{equation}
The bound is uniform for positive functions $\rho$ with a uniform positive lower bound and $C^2$-bound.
\end{lemma}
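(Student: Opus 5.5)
We argue by contradiction and blow up, following Dinew--Ko\l odziej as formulated in \cite[Section 6]{Sze18}. Suppose there are data $\rho_k$, with $\rho_k\ge\rho_->0$ and $\|\rho_k\|_{C^2}\le C$, sharing the subsolution $\underline\varphi$, together with solutions $\varphi_k$ normalized by $\int_X\varphi_k\chi^n=0$, such that
\[
M_k:=\sup_X|\nabla\varphi_k|_\chi=|\nabla\varphi_k|_\chi(x_k)\to\infty.
\]
By Lemma~\ref{lem:C0-estimate}, with constants uniform by Proposition~\ref{res:3-1} and the remark following Proposition~\ref{prop:subsolution}, we have $\|\varphi_k\|_{C^0}\le C$. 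Proposition~\ref{res:5-1} then gives $\|\ddbar\varphi_k\|_{C^0}\le C(1+M_k^2)$.

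\textbf{Rescaling.} Pass to a subsequence with $x_k\to x$. In holomorphic coordinates centered at $x_k$, with $\chi(x_k)$ the Euclidean metric, define on $B_{M_k}(0)\subset\C^n$ the functions
\[
u_k(w)=\varphi_k(x_k+w/M_k).
\]
These satisfy $\sup|u_k|\le C$, $|\nabla u_k|\le C$ with $|\nabla u_k(0)|\ge c>0$, and $|\ddbar u_k|\le C$. Hence, along a subsequence, $u_k\to u$ in $C^{1,\alpha}_{loc}(\C^n)$, and $u$ is bounded, Lipschitz, and satisfies $\nabla u(0)\ne0$, so $u$ is nonconstant.

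\textbf{The limiting equation.} The rescaled eigenvalues are
\[
\Lambda_{\varphi_k}=M_k^2\lambda(\ddbar u_k)+O(1).
\]
Write the equation as $\kappa_z(\Lambda_{\varphi_k})=\rho_k$ with $\Lambda_{\varphi_k}\in\Gamma_d$. Homogeneity of $\kappa_z$ (Lemma~\ref{lem:radial-scale}) turns this into
\[
\kappa_{z_k}\bigl(\lambda(\ddbar u_k)+O(M_k^{-2})\bigr)=\rho_k M_k^{-2}\to0.
\]
Using \eqref{eq:179}, together with the fact that the sets $t\overline{\mathcal{C}_{P_z}}$ shrink to $\partial\Gamma_d$ (Proposition~\ref{prop:radial-nesting}, applied uniformly in $z$ by compactness of $X$), the limit satisfies $\lambda(\ddbar u)\in\partial\Gamma_d$ in the viscosity sense. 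Concretely, $u$ is $d$-subharmonic, so that $\lambda(\ddbar u)\in\overline{\Gamma_d}$ as a viscosity subsolution, and any $C^2$ function touching $u$ from below cannot have Hessian eigenvalues in $\Gamma_d$. In pluripotential terms this says $(\ddbar u)^d\wedge\beta^{n-d}=0$.

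\textbf{Main obstacle.} The hard step is making this limit rigorous in the viscosity sense. One must check that the sublevel/superlevel structure of $\kappa_{z_k}$ at the scale $\rho_kM_k^{-2}$ degenerates uniformly to the cone $\Gamma_d$, and that the $O(M_k^{-2})$ perturbation coming from $\omega_0$, $\chi$ and the coordinates does not matter. For this I would use the \emph{concavity} and monotonicity of $\kappa$ (Lemma~\ref{lem:radial-scale}) together with the uniform bound $\overline{\mathcal{C}_{P_z}}\subset\Gamma_d$ and compactness in $z$. This is exactly the adaptation of \cite[Section 6]{Sze18}, with $f$ replaced by $\kappa_z$. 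Once the limit equation is established, the Liouville theorem of Dinew--Ko\l odziej \cite{DK17}, in the viscosity version of \cite{Sze18}, shows that a bounded $C^{1,\alpha}$ maximal $d$-subharmonic function on $\C^n$ is constant. This contradicts $\nabla u(0)\ne0$. Finally, all constants depend only on $\rho_-$, $\|\rho\|_{C^2}$, $\underline\varphi$ and the fixed data, which gives the asserted uniformity.
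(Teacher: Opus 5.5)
Your proposal is correct and follows essentially the same route as the paper: a blow-up at points of maximal gradient, using the $C^0$ estimate and Proposition~\ref{res:5-1}; $C^1_{\mathrm{loc}}$ convergence of the rescaled potentials; homogeneity of $\kappa_z$ to show that the limit is a $\Gamma_d$-solution in the viscosity sense of \cite[Definition 15]{Sze18}; and the Liouville theorem \cite[Theorem 20]{Sze18} to get a contradiction. The step you flag as the main obstacle needs only monotonicity of $\kappa$ together with its joint continuity and positivity on $X\times\Gamma_d$, not concavity: a lower test with Hessian eigenvalues $\xi\in\Gamma_d$ would give test eigenvalues $\eta_k\to\xi$ with $\kappa_{z_k}(\eta_k)\le\rho_kM_k^{-2}\to0$, contradicting $\kappa_x(\xi)>0$.
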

\begin{proof}
Suppose that normalized solutions $\varphi_j$ to (\ref{eq:radial-rho}) with right-hand-side $\rho_j$
satisfy
\[
K_j=\|\nabla\varphi_j\|_{C^0(\chi)}\longrightarrow\infty,
\qquad |\nabla\varphi_j(p_j)|_\chi=K_j,
\qquad p_j\longrightarrow p\in X.
\]
Choose holomorphic charts $\Psi_j:B_{2r}\to X$ with
$\Psi_j(0)=p_j$ and $(\Psi_j^*\chi)(0)=I$, whose coefficients
and inverses have uniform bounds. On $B_{rK_j}\subset\C^n$, put
\[
\begin{gathered}
\Phi_j(\zeta)=\Psi_j(\zeta/K_j),\qquad
v_j=\varphi_j\circ\Phi_j-\varphi_j(p_j),\\
g_j=K_j^2\Phi_j^*\chi,\qquad a_j=\Phi_j^*\omega_0.
\end{gathered}
\]
Then $g_j\to I$ and $a_j\to0$ locally uniformly.
The $C^0$ estimate and Proposition~\ref{res:5-1} give
\[
|v_j|+|\nabla v_j|+\max_{p,q}|(v_j)_{p\bar q}|\le C,
\qquad |\nabla v_j(0)|=1.
\]
In particular, $|\Delta_{\R^{2n}}v_j|\le C$. Interior
$W^{2,q}$ estimates, $q>2n$, and compact Sobolev embedding give,
after passing to a subsequence,
\[
v_j\longrightarrow v\quad\text{in }C^1_{\mathrm{loc}}(\C^n),
\qquad |v|+|\nabla v|\le C,
\qquad |\nabla v(0)|=1.
\]

Write $\lambda_g$ for eigenvalues relative to a metric $g$.
Homogeneity of $\kappa_z$ gives the rescaled equation
\begin{equation}\label{eq:radial-blow-up}
\begin{gathered}
\lambda_{g_j}(a_j+\ddbar v_j)\in\Gamma_d,\\
\kappa_{\Phi_j(\zeta)}
\bigl(\lambda_{g_j}(a_j+\ddbar v_j)\bigr)
=K_j^{-2}\rho_j(\Phi_j(\zeta))\longrightarrow0.
\end{gathered}
\end{equation}
Let $h\in C^2$ touch $v$ from above or below at $\zeta_0$.
Adding or subtracting $|\zeta-\zeta_0|^4$ makes the contact strict.
There are corresponding local extrema $\zeta_j\to\zeta_0$ of
$v_j-h$, and
\[
\eta_j=\lambda_{g_j}(a_j+\ddbar h)(\zeta_j)
\longrightarrow\xi=\lambda\bigl((h_{p\bar q})(\zeta_0)\bigr).
\]
For an upper test, $\ddbar h\ge\ddbar v_j$ at $\zeta_j$.
Eigenvalue monotonicity and upward stability of $\Gamma_d$ give
$\eta_j\in\Gamma_d$, hence $\xi\in\overline{\Gamma_d}$.

For a lower test, suppose $\xi\in\Gamma_d$. Then
$\eta_j\in\Gamma_d$ for large $j$ and
$\ddbar h\le\ddbar v_j$ at $\zeta_j$. Monotonicity gives
\[
0<\kappa_{\Phi_j(\zeta_j)}(\eta_j)
\le K_j^{-2}\rho_j(\Phi_j(\zeta_j))\longrightarrow0,
\]
whereas the left-hand side converges to $\kappa_p(\xi)>0$.
Thus every lower test has $\xi\notin\Gamma_d$.

These inequalities make $v$ a $\Gamma_d$-solution in the sense of
\cite[Definition 15]{Sze18}. The cone $\Gamma_d$ is open, convex,
symmetric, contains $\Gamma_n$, and lies in $\Gamma_1$.
The Liouville theorem \cite[Theorem 20]{Sze18} therefore makes the
bounded Lipschitz function $v$ constant, contradicting
$|\nabla v(0)|=1$.
\end{proof}

\subsection{Higher order estimate}
\begin{lemma}\label{lem higher order}
Under the hypotheses of Proposition~\ref{res:5-1}, a smooth solution
of \eqref{eq:radial-rho} satisfies that: for any $k$, there exists a constant $C_k$ such that 
\begin{equation}\label{eq:148}
\|\nabla^k \varphi\|_{C^0(X,\chi)}\le C_k.
\end{equation}
The bound is uniform for positive functions $\rho$ with a uniform positive lower bound and $C^{\max\{2,k-1\}}$-bound.
\end{lemma}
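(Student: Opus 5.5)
With the uniform bounds $\|\varphi\|_{C^0}\le C$ from Lemma~\ref{lem:C0-estimate}, $\|\nabla\varphi\|_{C^0}\le C$ from Lemma~\ref{res:5-5}, and $\|\ddbar\varphi\|_{C^0}\le C$ from Proposition~\ref{res:5-1}, the plan is the standard route: a $C^{2,\alpha}$ estimate via Evans--Krylov, then Schauder bootstrapping.

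\textbf{Uniform ellipticity.} First I would show that the linearized operator is uniformly elliptic at a solution. Since $\Lambda_\varphi$ is bounded and satisfies $\kappa_z(\Lambda_\varphi)=\rho(z)\ge\rho_->0$, the point $\Lambda_\varphi(z)/\rho(z)$ lies in a compact subset $\mathcal K$ of $\bigcup_z\{z\}\times\partial\mathcal C_{P_z}$. The set is compact because $\partial\mathcal C_{P_z}$ is a closed subset of $\mathcal C^{(1)}_{P_z}$ by Lemma~\ref{lem:component-closure}, and $\Lambda_\varphi/\rho$ is bounded with $\rho\ge\rho_-$. On $\mathcal K$, continuity and positivity of $\partial_iP_z$ (again Lemma~\ref{lem:component-closure}), together with the identity \eqref{eq:scale-linearization}, give $\vartheta'\le F_i\le 1$. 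Equivalently, $\nabla\kappa_z$ is bounded above and below on the relevant compact set. Hence the operator $F^{i\bar j}\nabla_i\nabla_{\bar j}$ is uniformly elliptic, with constants depending only on $\rho_-$, $\|\rho\|_{C^0}$, and the previous bounds.

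\textbf{$C^{2,\alpha}$ via Evans--Krylov.} Next I would write the equation as $\mathcal F(z,\chi^{-1}\omega_\varphi):=\kappa_z(\lambda(\chi^{-1}\omega_\varphi))-\rho(z)=0$. This is concave in the matrix variable by Lemma~\ref{lem:radial-scale} together with Proposition~\ref{prop:matrix}, or the classical spectral lift. It is uniformly elliptic on the compact range found in the first step, and it is $C^{1}$ (in fact smooth) in $z$, with bounds coming from $\|\rho\|_{C^2}$ and the smoothness of $\kappa$ on $X\times\Gamma_d$. The Evans--Krylov type theorem for complex Hessian equations on Hermitian manifolds with $C^2$-bounded solutions, in the form of Tosatti--Wang--Weinkove--Yang \cite{TWWY15}, then gives $\|\varphi\|_{C^{2,\alpha}}\le C$. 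The main obstacle is making sure the hypotheses of \cite{TWWY15} are met. The dependence on $z$ enters through both $\kappa_z$ and $\rho$, so I would verify that their result allows a concave operator with smooth $x$-dependence, rather than only a right-hand side $\psi(x)$. If it does not, I would instead cite the real Evans--Krylov theorem for concave operators $F(D^2u,x)$ that are H\"older in $x$, applied in local coordinates. This works because $\ddbar\varphi$ is a linear function of $D^2\varphi$ plus lower-order terms that are already bounded.

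\textbf{Bootstrapping.} Finally, differentiate the equation in a coordinate direction $\partial_\ell$. The derivative $\partial_\ell\varphi$ satisfies a linear uniformly elliptic equation $F^{i\bar j}\partial_i\partial_{\bar j}(\partial_\ell\varphi)=f_\ell$, whose coefficients are in $C^{\alpha}$ and whose right-hand side involves $\partial_\ell\rho$, $\partial_z\kappa$, and derivatives of $\omega_0$ and $\chi$. Schauder estimates give $\varphi\in C^{3,\alpha}$. Iterating this gives $\|\varphi\|_{C^{k,\alpha}}\le C_k$, where the constant depends on derivatives of $\rho$ up to order roughly $k-1$. This is consistent with the stated $C^{\max\{2,k-1\}}$ dependence, since the $C^2$-norm of $\rho$ is already needed in the earlier estimates.
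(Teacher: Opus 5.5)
Your proposal is correct and follows essentially the same route as the paper: the bounded eigenvalues from Proposition~\ref{res:5-1} and Lemma~\ref{res:5-5} place $(z,\rho,\Lambda_\varphi)$ in a compact set on which $P_{\rho,z}$ (equivalently $\kappa_z$) is uniformly elliptic and $\Lambda_\varphi$ stays inside $\Gamma_d$, after which the complex Evans--Krylov estimate of \cite{TWWY15} and Schauder bootstrapping finish the argument. The one point to tighten is compactness uniformly in $z$, since fiberwise closedness of $\partial\mathcal{C}_{P_z}$ is not enough: as the paper does, use the set $\Sigma=\{(z,\rho,\lambda):\rho\in[m,M],\ |\lambda|\le B,\ P_{\rho,z}(\lambda)=0,\ \partial_IP_{\rho,z}(\lambda)\ge0\ (1\le|I|\le d-1)\}$, which is closed by continuity of the coefficients and, by Lemma~\ref{lem:component-closure} and Proposition~\ref{prop:radial-nesting}, lies where $\partial_iP_{\rho,z}>0$ and $\lambda\in\Gamma_d$.
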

\begin{proof}
    Combining Proposition \ref{res:5-1} and Lemma \ref{res:5-5}, we can get that there exists a constant $B$ such that $|\lambda[\Lambda_{\varphi}]|\le B$. By assumption, there exist constants $m,M$ such that $0<m\le \rho \le M$. Consider 
    \begin{equation*}
        \Sigma= \{(z,\rho,\lambda): z\in X, \rho \in [m,M], |\lambda|\le B, P_{\rho,z}(\lambda)=0, \partial_I P_{\rho,z}(\lambda)\ge 0 (1\le |I|\le d-1) \}.
    \end{equation*}
    This is compact. Lemma \ref{lem:component-closure} implies that for any $(z,\rho,\lambda)\in \Sigma$, $\lambda \in C_{P_{\rho,z}}$. Proposition \ref{prop:radial-nesting} makes sure that the above $\lambda$ lies in the interior of $\Gamma_d$. Lemma \ref{lem:radial-scale} implies that $P_{\rho,z}$ is uniformly elliptic on this compact set. Then, we can apply complex Evans-Krylov \cite{TWWY15} and Schauder estimates to get higher order estimate for $\varphi$.
\end{proof}

\section{Proof of Theorem~\texorpdfstring{\ref{thm:radial}}{1}}\label{sec:continuity}

We first record the linear statement used for openness.
\begin{lemma}\label{res:4-2}
Let $L$ be a smooth uniformly elliptic second-order operator on $X$
with no zeroth-order term. For $k\ge0$ and $0<\gamma<1$, the map
\begin{equation}\label{eq:88}
C^{k+2,\gamma}_0(X)\times\R\longrightarrow C^{k,\gamma}(X),
\qquad(v,a)\longmapsto Lv-a
\end{equation}
is an isomorphism. The subscript $0$ denotes zero $\chi^n$-mean.
\end{lemma}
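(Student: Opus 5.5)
The plan is to reduce the statement to the Fredholm alternative for the linear operator $L$ on the compact manifold $X$. Fix a smooth positive volume form, which we take to be $\chi^n$. Since $L$ is uniformly elliptic with smooth coefficients, $L:C^{k+2,\gamma}(X)\to C^{k,\gamma}(X)$ is Fredholm of index zero by the Schauder theory on compact manifolds. We will compute its kernel and the annihilator of its range, and then show that the constants $a\in\R$ exactly fill the one-dimensional complement of $\operatorname{Im}L$.

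\textbf{Kernel.} Because $L$ has no zeroth-order term, the strong maximum principle of E.~Hopf applies. Any $v$ with $Lv=0$ attains its maximum on the compact manifold $X$, and $X$ is connected, so $v$ is constant. Hence $\ker L=\R$, and therefore $\ker L\cap C^{k+2,\gamma}_0=\{0\}$.

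\textbf{Range.} Index zero gives $\dim\operatorname{coker}L=1$, so $\operatorname{Im}L=\{f:\int_X f\,\psi\,\chi^n=0\}$ for a nonzero $\psi\in\ker L^*$, where $L^*$ is the formal adjoint with respect to $\chi^n$. The step I expect to be the main obstacle is showing that $\psi$ can be chosen strictly positive, since only then is a nonzero constant transverse to $\operatorname{Im}L$. I would argue as follows.

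\emph{Step 1: $\psi$ does not change sign.} This is the Gauduchon-type argument, equivalently a Krein--Rutman argument for the positive resolvent $(-L+1)^{-1}$. Suppose $\psi^+$ and $\psi^-$ were both nonzero. Choose $f\ge0$ supported where $\psi>0$ with $\int f\psi>0$. Then $f\notin\operatorname{Im}L$.

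\emph{Step 2: contradiction via a maximum principle.} On the other hand, solving $Lv=f-c$ for a suitable constant $c$ and applying the maximum principle at the extrema of $v$ forces sign constraints that are incompatible with Step 1. Cleaner is the spectral route: the principal eigenfunction of $L^*$ is positive. This rests on Krein--Rutman applied to the compact, strongly positive operator $(\mu-L)^{-1}$ for large $\mu$, whose positivity follows from the maximum principle. Its principal eigenvalue is $0$, because constants lie in $\ker L$ and the principal eigenvalue is the only one admitting a positive eigenfunction. The same theory applied to the adjoint then gives $\psi>0$.

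\textbf{Conclusion.} With $\psi>0$ we have $\int_X 1\cdot\psi\,\chi^n>0$, so $1\notin\operatorname{Im}L$.

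\emph{Injectivity.} If $Lv-a=0$ with $v$ of zero mean, pairing with $\psi$ gives $a\int\psi=0$, so $a=0$. Then $v$ is constant and of zero mean, hence $v=0$.

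\emph{Surjectivity.} Given $f$, set $a=-\int f\psi/\int\psi$. Then $f+a\in\operatorname{Im}L$, so $f+a=Lw$ for some $w$. Subtract the mean of $w$ to land in $C^{k+2,\gamma}_0$. Elliptic regularity puts $w$ in $C^{k+2,\gamma}$ whenever $f\in C^{k,\gamma}$.

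\emph{Boundedness of the inverse.} This follows from the open mapping theorem, or directly from Schauder estimates combined with the injectivity just proved, via a standard compactness argument.
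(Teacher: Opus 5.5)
Your proof is correct in outline, but it takes a longer route than the paper's. The paper never identifies $\ker L^*$ for the given $L$. It obtains index zero for $(v,a)\mapsto Lv-a$ on $C^{k+2,\gamma}_0(X)\times\R$ by deforming $L$ to the Gauduchon Laplacian $L_0$; there $\partial\bar\partial\chi_0^{n-1}=0$ removes the zeroth-order term of $L_0^*$, so the maximum principle gives $\ker L_0=\ker L_0^*=\R$. It then proves injectivity directly: if $Lv=a$, evaluating at a maximum and at a minimum of $v$ gives $a\le 0\le a$, and the strong maximum principle together with zero mean gives $v=0$.

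The step you flag as the main obstacle disappears in the same way. Once $\operatorname{Im}L$ has codimension one, you only need $1\notin\operatorname{Im}L$, and $Lw=1$ is impossible at a maximum point of $w$. This gives $C^{k,\gamma}(X)=\operatorname{Im}L\oplus\R$, and incidentally $\int_X\psi\,\chi^n\neq0$, with no sign information on $\psi$ and no Krein--Rutman. Your spectral route is valid and yields more: a positive invariant density and the explicit constant $a=-\int_Xf\psi\,\chi^n/\int_X\psi\,\chi^n$. But it costs compactness and strong positivity of $(\mu-L)^{-1}$, the adjoint eigenfunctional, and regularity plus a Hopf argument for $L^*$, which in general carries a zeroth-order term.

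Two smaller points. First, your Step 2 is circular as written: choosing $c$ with $f-c\in\operatorname{Im}L$ already presupposes $\int_X\psi\,\chi^n\neq0$. A correct direct version weights a bump on $\{\psi>0\}$ against one on $\{\psi<0\}$ to produce a nonnegative, nonzero element of $\operatorname{Im}L$, which the strong maximum principle forbids. Second, index zero of $L$ does not follow from Schauder estimates alone. It follows from homotopy invariance to an operator of known index, which is exactly what the paper's Gauduchon deformation provides.
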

\begin{proof}
Deformation to the Hermitian Laplacian does not change the index. Let $\chi_0$ be a Gauduchon metric, we show that $C^{k+2,\gamma}_0(X)\times\R\longrightarrow C^{k,\gamma}(X),
(v,a)\longmapsto L_0v-a$ has index zero, where $L_0=\chi_0^{i\bar j}\partial_i\partial_{\bar j}$. The map $L_0:C^{k+2,\gamma}(X)\to C^{k,\gamma}(X)$ has index zero: By maximum principle, $\operatorname{ker}(L_0)=\R$. And the adjoint $L_0^*$ satisfies $(L_0^*w)\chi_0^n=n\sqrt{-1}\partial\bar\partial(w\chi_0^{n-1})$. It has the same principal part as $L_0$ and vanishing 0-th order since $\partial\bar\partial \chi_0^{n-1}=0$. Maximum principle applied again gives $\operatorname{ker}(L_0^*)=\R$. This proves the zero index. Therefore, restriction to zero-mean subspace make the index $-1$. But adding $\R$ makes the map $C^{k+2,\gamma}_0(X)\times\R\longrightarrow C^{k,\gamma}(X),
(v,a)\longmapsto L_0v-a $ have index $0$ again.   If $Lv=a$, evaluation at a maximum and a
minimum gives $a=0$. The strong maximum principle then gives
$v=0$, so the map is injective and hence isomorphic. 
\end{proof}

\begin{proof}[Proof of Theorem~\ref{thm:radial}]
If $(\varphi,c)$ is an elliptic solution, then
\[
\Lambda_\varphi/c\in\partial \mathcal{C}_{P_z}\subset\Gamma_d,
\qquad\kappa_z(\Lambda_\varphi)=c.
\]
Lemma~\ref{res:7-4} gives $c=\sigma$, and
Proposition~\ref{prop:subsolution} shows that $\varphi$ itself is a
$\mathcal C$-subsolution at $\sigma$.

Conversely, let $\underline\varphi$ be a subsolution at $\sigma$, and we may take it smooth.
Choose $\varepsilon>0$ as in Proposition~\ref{res:3-1}.
The definition of $\sigma$ supplies $v\in\SH_d(X,\omega_0)$
such that, setting $h(z)=\kappa_z(\Lambda_v)$,
\[
0<h^-:=\min_Xh\le h(z)\le h^+:=\max_Xh<\sigma+\varepsilon.
\]
Normalize $v$ by $\int_Xv\chi^n=0$. Consider the radial path
\begin{equation}\label{eq:199}
\kappa_z(\Lambda_{\varphi_t})=c_t h(z)^{1-t},\qquad
\varphi_t\in\SH_d(X,\omega_0),\qquad
\int_X\varphi_t\chi^n=0,\qquad
c_t>0,\quad 0\le t\le1.
\end{equation}
At $t=0$, $(\varphi_0,c_0)=(v,1)$.

\emph{Openness.} Write $c_t=e^{a_t}$ and linearize
\[
\log\kappa_z(\Lambda_\varphi)-(1-t)\log h-a=0.
\]
At an admissible solution, the linearization in $(\varphi,a)$ is
$(w,b)\mapsto Lw-b$, where $L$ is uniformly elliptic and has
no zeroth-order term. Lemma~\ref{res:4-2} and the
implicit-function theorem give openness.

\emph{Uniform estimates and closedness.} At a maximum and a minimum
of $\varphi_t-v$, eigenvalue monotonicity gives, respectively,
\[
c_t\le(h^+)^t,\qquad c_t\ge(h^-)^t.
\]
Consequently, with $\rho_t(z)=c_t h(z)^{1-t}$,
\begin{equation}\label{eq:path-scale-bounds}
h^-\le\rho_t(z)\le h^+<\sigma+\varepsilon,
\qquad
\|\rho_t\|_{C^k(X)}\le C_k\quad(k\ge0).
\end{equation}
Proposition~\ref{res:3-1} makes $\underline\varphi$ a common
subsolution for all these radial equations, with uniform slice bounds.
Therefore we can use Lemma \ref{lem higher order} to get uniform $C^k$ estimates for
$\varphi_t$. The constants $c_t$ stay in a compact subinterval
of $(0,\infty)$. The solvable set is therefore closed
in $[0,1]$. At $t=1$ we obtain
$\kappa_z(\Lambda_{\varphi_1})\equiv c_1$, and the necessity
argument gives $c_1=\sigma$.

\emph{Uniqueness.} Suppose $\varphi$ and $v$ are two solutions.
Both have radial constant $\sigma$. The cone $\Gamma_d$ is
symmetric and convex, so its spectral matrix domain is convex
\cite{Bha97}. Along $\eta_s=(1-s)\omega_\varphi+s\omega_v$,
\[
0=\kappa_z(\Lambda_v)-\kappa_z(\Lambda_\varphi)
=\left(\int_0^1\kappa_z^{i\bar j}(\eta_s)\,ds\right)
(v-\varphi)_{i\bar j}.
\]
Here $\kappa_z^{i\bar j}$ denotes differentiation of the spectral
function with respect to the form. By Lemma \ref{lem:radial-scale}, $\kappa_z$ is strictly increasing in each coordinate. This implies that $\kappa_z^{i\bar j}$ is strictly positive definition. As a result, the averaged matrix is uniformly
positive definite on $X\times[0,1]$. The strong maximum principle
implies that $v-\varphi$ is constant.

Finally, testing the two slopes with the solution and applying
Lemma~\ref{res:7-4} proves \eqref{eq:23}. 
\end{proof}

\section{Hermitian complex $k$-Hessian equation and complex quotient equation}\label{sec:source-9}
\subsection{complex $k$-Hessian equation}\label{sec:source-9-1}
Take $d=k$ and $p_z=s^k-e^{\psi(z)}$, with $\psi\in C^\infty(X)$. The radial admissible class is $\SH_k(X,\omega_0)=\{v\in C^\infty(X):\lambda[\chi^{i\bar{j}}(\omega_0+ dd^c v)_{l \bar{j}}]\in \Gamma_k \}$.

\begin{corollary}\label{res:9-1}
If $\SH_k(X,\omega_0)\ne\varnothing$, there are a smooth $\varphi\in\SH_k(X,\omega_0)$ and a unique constant $b\in\R$ such that
\begin{equation}\omega_\varphi^k \wedge \chi^{n-k}=e^{\psi+b}\chi^n.\label{eq:209}\end{equation}
The solution is unique modulo constants, and
\begin{equation}e^b=\sigma^k=\inf_{v\in\SH_k(X,\omega_0)}\max_X e^{-\psi}\frac{\omega_v^n}{\chi^n}=\sup_{v\in\SH_k(X,\omega_0)}\min_X e^{-\psi}\frac{\omega_v^n}{\chi^n}.\label{eq:210}\end{equation}
No K\"ahler assumption is required.
\end{corollary}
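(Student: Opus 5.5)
We apply Theorem~\ref{thm:radial} to the family $p_z(s)=s^k-e^{\psi(z)}$ with $d=k$. Here $a_0=e^{\psi}$ and $a_1=\cdots=a_{k-1}=0$, and we take $k\ge2$ as in the standing hypotheses. The plan is to verify Assumption~\ref{ass1}, compute the radial scale explicitly, exhibit a $\mathcal C$-subsolution at $\sigma$ using the nonemptiness of $\SH_k(X,\omega_0)$, and finally translate \eqref{e thm:radial} and \eqref{eq:23} into \eqref{eq:209}--\eqref{eq:210}.

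\emph{Step 1: Assumption~\ref{ass1}.} Since $p_z^{(j)}(s)=\frac{k!}{(k-j)!}s^{k-j}$ for $1\le j\le k-1$, the largest roots are $r_j\equiv0$ for $j\ge1$, while $r_0=e^{\psi/k}>0$.
\begin{itemize}
\item The root sequence gives $r_0>r_1\ge\cdots\ge r_{k-1}$, so $p_z$ is strictly right-Noetherian.
\item All $r_j$ are smooth: this also follows directly from Lemma~\ref{lem ass 2}(ii), since $a_1,\dots,a_{k-1}$ are constant.
\item Finally $a_{k-1}=0\ge0$.
\end{itemize}

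\emph{Step 2: the radial scale.} Here $P_z=S_k-e^{\psi}$. For $\lambda\in\Gamma_k$, the defining relation $P_z(\lambda/\kappa)=0$ with $\lambda/\kappa\in\partial\mathcal C_{P_z}$ forces
\[
\kappa_z(\lambda)=\bigl(e^{-\psi(z)}S_k(\lambda)\bigr)^{1/k}.
\]
The uniqueness statement in Lemma~\ref{lem:radial-scale} pins this down: the function above is positive on $\Gamma_k$, and $\lambda/\kappa$ lies on $\{S_k=e^{\psi}\}\cap\Gamma_k$, which is $\partial\mathcal C_{P_z}$ by Lemma~\ref{lem:component-closure} together with Proposition~\ref{prop:radial-nesting}(i).

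For the normalization we use $\omega^k\wedge\chi^{n-k}=S_k(\Lambda)\chi^n$. Hence $\kappa_z(\Lambda_v)^k=e^{-\psi}\,\omega_v^k\wedge\chi^{n-k}/\chi^n$. This is the quantity that should appear in \eqref{eq:210}; the $\omega_v^n$ there is read as $\omega_v^k\wedge\chi^{n-k}$. Because $t\mapsto t^k$ is increasing on $(0,\infty)$, it commutes with $\inf\max$ and $\sup\min$, so $\sigma^k$ equals both expressions in \eqref{eq:210} once \eqref{eq:23} is established.

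\emph{Step 3: the subsolution, which I expect to be the main point.} Since $a_j=0$ for $j\ge1$, we have $\partial_IP_z=\partial_IS_k$ for all $1\le|I|\le k-1$. These derivatives are homogeneous and independent of $z$. By \eqref{eq:7}, $\mathcal C^{(1)}_{P_z}$ is therefore a fixed open cone $\mathcal K$, invariant under positive scaling. It contains $\Gamma_k$, because on $\Gamma_k$ all $\partial_I\sigma_k>0$ for $|I|\le k-1$.

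Now take any $v\in\SH_k(X,\omega_0)$. Then $\sigma^{-1}\Lambda_v(z)\in\Gamma_k\subset\mathcal K=\mathcal C^{(1)}_{P_z}$ for every $z$. This is exactly condition \eqref{eq:radial-derived} with $\rho\equiv\sigma$, so by Proposition~\ref{prop:subsolution} the function $v$ is a $\mathcal C$-subsolution at $\sigma$. The only care needed is the inclusion $\Gamma_k\subset\mathcal K$, which is the standard positivity of derivatives of $\sigma_k$ on $\Gamma_k$.

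\emph{Step 4: conclusion.} Theorem~\ref{thm:radial} provides $\varphi\in\SH_k(X,\omega_0)$, unique modulo constants, with $\kappa_z(\Lambda_\varphi)\equiv\sigma$. That is, $S_k(\Lambda_\varphi)=\sigma^ke^{\psi}$, which is \eqref{eq:209} with $e^b=\sigma^k$. The identities \eqref{eq:210} then follow from \eqref{eq:23} and Step 2.

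Uniqueness of $b$ is proved as follows. Any admissible solution of \eqref{eq:209} satisfies $\Lambda\in\Gamma_k$ and hence $\kappa_z(\Lambda)\equiv e^{b/k}$. The maximum-point comparison of Lemma~\ref{res:7-4} then forces $e^{b/k}=\sigma$. Uniqueness of $\varphi$ modulo constants is the uniqueness part of Theorem~\ref{thm:radial}. No Kähler hypothesis enters anywhere in this argument.
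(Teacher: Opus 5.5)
Your proposal is correct and follows the paper's proof. Like the paper, you verify Assumption~\ref{ass1}, compute $\kappa_z(\lambda)=(e^{-\psi}S_k(\lambda))^{1/k}$, note that every $v\in\SH_k(X,\omega_0)$ is a $\mathcal C$-subsolution at $\sigma$, and apply Theorem~\ref{thm:radial} with $e^b=\sigma^k$; you additionally fill in three points the paper leaves implicit, namely the justification $\sigma^{-1}\Lambda_v\in\Gamma_k\subset\mathcal C^{(1)}_{P_z}$ via Proposition~\ref{prop:subsolution}, the uniqueness of $b$ via Lemma~\ref{res:7-4}, and the corrected reading of $\omega_v^n$ as $\omega_v^k\wedge\chi^{n-k}$ in \eqref{eq:210}.
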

\begin{proof}
Here $a_k=1$, $a_i=0$ for $1 \le i \le k-1$ and $a_0=e^{\psi(z)}$. Here $r_0=e^{\psi/k}>0=r_1=\cdots=r_{k-1}$. All these functions are smooth functions on $X$. $a_{k-1}\ge 0$.  Hence Assumption \ref{ass1} holds in this case. 
Note that $P_z(\lambda)= S_k(\lambda)-e^{\psi(z)}$ and $P_z \big(\kappa_z(\lambda)^{-1} \lambda \big)=0.$ This implies that 
\begin{equation*}
    \kappa_z(\lambda)= (\frac{S_k(\lambda)}{e^{\psi(z)}})^{\frac{1}{k}}.
\end{equation*}
Then, we have that:
\begin{equation}\label{eq:23}
    \sigma= \inf_{v\in \SH_k(X,\omega_0)} \max_X (\frac{S_k (\lambda[\Lambda_v])}{e^{\psi(z)}})^{\frac{1}{k}}.
\end{equation}

Since any $v\in \SH_k(X,\omega_0)$ satisfies Definition~\ref{def:radial-subsolution}. Theorem~\ref{thm:radial} gives $\omega_\varphi^k \wedge \chi^{n-k}=\sigma^k e^\psi\chi^n$; set $b=k\log\sigma$. Raising \eqref{eq:23} to the $k$-th power gives \eqref{eq:210}.
\end{proof}
The first min--max formula in \eqref{eq:210} is the Hermitian normalizing constant in \cite[Corollary 1.1]{GS24}. It is a consequence of the radial solvability theorem.

\subsection{complex quotient equation}\label{sec:source-9-2}
Take $d=k$ and $p_z(s)=s^k- e^{\psi} s^l$, with $\psi \in C^{\infty}(X)$. The radial admissible class is $$\SH_k(X,\omega_0)=\{v\in C^\infty(X):\lambda[\chi^{i\bar{j}}(\omega_0+ dd^c v)_{l \bar{j}}]\in \Gamma_k \}$$.

\begin{corollary}[The slope-normalized $J$-criterion]\label{res:9-2}
One has $0<\xi<\infty$. There exist a smooth $\varphi$ and a constant $\gamma>0$ solving
\begin{equation}
\omega_\varphi^{k}\wedge\chi^{n-k}= e^{\psi+b} \omega_\varphi^l \wedge \chi^{n-l},\qquad \varphi\in \SH_k(X,\omega_0),\label{eq:211}
\end{equation}
if and only if some $\underline\varphi\in C^2(X)$ satisfies
\begin{equation}
k \omega_{\varphi}^{k-1}\wedge \chi^{n-k} - e^{\psi+b} l \omega_{\varphi}^{l-1}\wedge \chi^{n-l}>0.
\label{eq:212}\end{equation}
$\varphi$ is unique modulo constants. Here $b$ is given by:
\begin{equation*}
    e^b = \inf_{v\in \SH_k(X,\omega_0)} \max_X e^{-\psi}\frac{S_k(\lambda[\Lambda_v])}{S_l(\lambda[\Lambda_v])}.
\end{equation*}
\end{corollary}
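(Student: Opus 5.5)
The plan is to deduce Corollary~\ref{res:9-2} from Theorem~\ref{thm:radial} applied to $p_z(s)=s^k-e^{\psi(z)}s^l$ with $d=k$ and $0\le l<k$. This mirrors the proof of Corollary~\ref{res:9-1}. I read the constant $\gamma>0$ in the statement as $e^{b}$, and the quantity $\xi$ as the radial slope $\sigma$. I read \eqref{eq:212} as a condition on $\omega_{\underline\varphi}$, evaluated at the constant $b$ of the statement.

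\emph{Step 1: verify Assumption~\ref{ass1}.} Since $p_z=s^l(s^{k-l}-e^{\psi})$, the largest root is $r_0=e^{\psi/(k-l)}>0$. For $1\le i\le l$,
\[
p_z^{(i)}(s)=\tfrac{k!}{(k-i)!}s^{k-i}-e^{\psi}\tfrac{l!}{(l-i)!}s^{l-i},
\]
so its largest root is
\[
r_i=\Bigl(e^{\psi}\prod_{j=0}^{i-1}\tfrac{l-j}{k-j}\Bigr)^{1/(k-l)}.
\]
Each factor $(l-j)/(k-j)$ lies in $(0,1)$, so $r_0>r_1>\cdots>r_l>0$. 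For $i>l$, $p_z^{(i)}$ is a monomial and $r_i=0$. Hence $p_z$ is strictly right-Noetherian and all $r_i$ are smooth. The coefficient $a_{k-1}$ equals $e^{\psi}>0$ if $l=k-1$ and $0$ otherwise, so $a_{k-1}\ge0$. Nonemptiness of $\SH_k(X,\omega_0)$ is implicit in the statement; I would add it as a hypothesis, as in Corollary~\ref{res:9-1}. Lemma~\ref{res:7-4} then gives $0<\sigma<\infty$.

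\emph{Step 2: compute $\kappa$ and $\sigma$.} Solving $P_z(\lambda/\kappa)=0$ gives $\kappa^{-k}S_k(\lambda)=e^{\psi}\kappa^{-l}S_l(\lambda)$, so
\[
\kappa_z(\lambda)=\bigl(e^{-\psi}S_k(\lambda)/S_l(\lambda)\bigr)^{1/(k-l)}.
\]
Hence $\sigma^{k-l}=\inf_v\max_X e^{-\psi}S_k/S_l=e^{b}$. Multiplying $P_z(\Lambda_\varphi/\sigma)=0$ by $\sigma^k$ turns \eqref{e thm:radial} into
\[
S_k(\Lambda_\varphi)=e^{\psi+b}S_l(\Lambda_\varphi),
\]
which is \eqref{eq:211} after the normalizations $S_j\chi^n=\omega^j\wedge\chi^{n-j}$. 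Theorem~\ref{thm:radial} then yields solvability, uniqueness modulo constants, and the min--max formula. Conversely, any solution of \eqref{eq:211} in $\SH_k$ has constant $\kappa$, and Lemma~\ref{res:7-4} forces that constant to be $\sigma$. This gives the uniqueness of $b$.

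\emph{Step 3: translate the subsolution condition.} By Proposition~\ref{prop:subsolution}, a $\mathcal C$-subsolution at $\sigma$ is the same as $\sigma^{-1}\Lambda_{\underline\varphi}\in\mathcal C^{(1)}_{P_z}$. By homogeneity,
\[
\sigma^{k-1}\partial_iP_z(\Lambda/\sigma)=\partial_i\bigl(S_k-e^{\psi+b}S_l\bigr)(\Lambda).
\]
In a frame diagonalizing $\omega_{\underline\varphi}$, positivity of the $(n-1,n-1)$-form in \eqref{eq:212} is, up to positive combinatorial constants, equivalent to $\partial_i(S_k-e^{\psi+b}S_l)(\Lambda_{\underline\varphi})>0$ for every $i$. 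This is the standard identity used by Sz\'ekelyhidi for quotient equations. So \eqref{eq:212} says that $\Lambda_{\underline\varphi}/\sigma$ lies in $\{\partial_iP_z>0\ \forall i\}$.

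\emph{Main obstacle.} Proposition~\ref{prop:derived-component} identifies $\mathcal C^{(1)}_{P_z}$ only with the connected component of $\{\partial_iP_z>0\ \forall i\}$ containing large diagonal points, not with the whole set. I would first add, implicitly as in \cite{Sze18}, the requirement $\Lambda_{\underline\varphi}\in\Gamma_{k-1}$. I would then show that $\{\lambda\in\Gamma_{k-1}:\partial_iP_z(\lambda)>0\ \forall i\}$ is connected and contains $T\one$ for large $T$.

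For this I would use the segment $\gamma(t)=\lambda+t(T\one-\lambda)$, as in the proof of Proposition~\ref{prop:derived-component}. Along it, $\partial_iS_k/\partial_iS_l$ is increasing: this is the quotient concavity and monotonicity of $\partial_iS_k/\partial_iS_l$ on $\Gamma_{k-1}$. Hence $\partial_iP_z$ stays positive along the segment. If this argument fails for general $l$, I would restrict the reading of \eqref{eq:212} to ``$\sigma^{-1}\Lambda_{\underline\varphi}\in\mathcal C^{(1)}_{P_z}$,'' which is exactly condition (ii) of Proposition~\ref{prop:subsolution}, and conclude directly from Theorem~\ref{thm:radial}.
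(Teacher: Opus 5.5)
Your proposal is correct and follows the paper's route: check Assumption~\ref{ass1} from the explicit roots $r_i$, compute $\kappa_z=(e^{-\psi}S_k/S_l)^{1/(k-l)}$ so that $\sigma^{k-l}=e^b$, and apply Theorem~\ref{thm:radial}. The paper stops there and never shows that \eqref{eq:212} (read for $\underline\varphi$) is equivalent to $\sigma^{-1}\Lambda_{\underline\varphi}\in\mathcal C^{(1)}_{P_z}$, so your Step~3 supplies a step the paper leaves implicit, and your segment/monotonicity argument works; moreover, your added hypothesis $\Lambda_{\underline\varphi}\in\Gamma_{k-1}$ can be dropped once $\SH_k(X,\omega_0)\ne\varnothing$, because at a minimum point of $\underline\varphi-v$ with $v\in\SH_k(X,\omega_0)$ one has $\Lambda_{\underline\varphi}\in\Gamma_k$, and the set of $z$ with $\sigma^{-1}\Lambda_{\underline\varphi}(z)\in\mathcal C^{(1)}_{P_z}$ is open and, by \eqref{eq:212} and \eqref{eq:7}, closed in the connected manifold $X$.
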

\begin{proof}
First, we have that $a_l=e^{\psi}\ge 0$ and $a_i=0$ for $i \neq l$. We can compute that:
\begin{equation*}
    r_i= (\frac{l! (k-i)! e^{\psi}}{(l-i)! k!})^{\frac{1}{k-l}}, \text{ for } i \le l,
\end{equation*}
and $r_i=0$ for $i \ge l+1$. Hence Assumption 1.1 holds in this case. Note that $P_z(\lambda)= S_k(\lambda)- e^{\psi} S_l(\lambda)$ and $P_z(\kappa_z(\lambda)^{-1}\lambda)=0.$ We can get that:
\begin{equation}
    \kappa_z = (e^{-\psi}\frac{S_k}{S_l})^{\frac{1}{k-l}}.
\end{equation}
This implies that:
\begin{equation*}
    \sigma= \inf_{v\in \SH_k(X,\omega_0)} \max_X (e^{-\psi}\frac{S_k(\lambda[\Lambda_v])}{S_l(\lambda[\Lambda_v])})^{\frac{1}{k-l}}.
\end{equation*}
According to Theorem \ref{thm:radial}, we have that $$e^b=\sigma^{k-l}= \inf_{v\in \SH_k(X,\omega_0)} \max_X e^{-\psi}\frac{S_k(\lambda[\Lambda_v])}{S_l(\lambda[\Lambda_v])}.$$ This Corollary follows immediately from Theorem \ref{thm:radial}.
\end{proof}
Corollary~\ref{res:9-2} recovers \cite[Theorem 1.3, equations (1.19)--(1.21)]{GS24}. 

\section{K\"ahler case}

\begin{lemma}\label{lem kappa est}
    Given $\mu \in \mathbb{R}^n$ and $\tau \ge 0$. Let $p=s^d- \sum_{k=0}^{d-1} a_k s^k$ be a strictly right-Noetherian polynomial with $a_{d-1}\ge 0$. Let $\kappa$ be its radial slope. Suppose that $|\mu_i|\le M$ for every $i$. Then, we have that:
    \begin{equation*}
        \frac{\tau-M}{r(p)} \le \kappa(\mu+ \tau \one) \le \frac{\tau+M}{r(p)},
    \end{equation*}
    for $\tau>M$.
\end{lemma}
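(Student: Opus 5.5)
The plan is to combine monotonicity and homogeneity of $\kappa$ (Lemma~\ref{lem:radial-scale}) with the explicit value of $\kappa$ on the diagonal. Here $r(p)=r_0$ denotes the largest real root of $p$. By Theorem~\ref{thm relative position}, the hypotheses (strictly right-Noetherian and $a_{d-1}\ge0$) give $r_0>0$, and Assumption~\ref{ass1}(i)--(iii) holds for this single polynomial. So $\kappa$ is defined on $\Gamma_d$, homogeneous of degree one, and strictly increasing in each coordinate.

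\emph{Step 1: the diagonal value.} For $t>0$ I will show $\kappa(t\one)=t/r_0$. Since $P(s\one)=p(s)$ and $p$ is positive on $(r_0,\infty)$, the ray $\{s\one:s>r_0\}$ lies in $\mathcal{C}_P$. Indeed, it is connected, contained in $\{P>0\}$, and contains large diagonal points. Hence $r_0\one\in\partial\mathcal{C}_P$. Then $(t/r_0)^{-1}t\one=r_0\one\in\partial\mathcal{C}_P$, and uniqueness of the radial scale in \eqref{e radial scale} gives $\kappa(t\one)=t/r_0$.

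\emph{Step 2: sandwich.} Assume $\tau>M$. Since $|\mu_i|\le M$, we have coordinatewise
\[
(\tau-M)\one\le\mu+\tau\one\le(\tau+M)\one,
\]
with all three vectors in $\Gamma_n\subset\Gamma_d$. This is why $\tau>M$ is needed: it keeps $\kappa$ defined at all three points. Coordinatewise monotonicity of $\kappa$ (Lemma~\ref{lem:radial-scale}(ii)), applied along segments in $\Gamma_n$, gives
\[
\kappa((\tau-M)\one)\le\kappa(\mu+\tau\one)\le\kappa((\tau+M)\one).
\]
Step~1 then yields the two claimed bounds.

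I expect no real obstacle. The only point needing care is that the relevant diagonal boundary point is exactly $r_0\one$, so that $\kappa(\one)=1/r_0$ rather than some other root. This follows because $p>0$ to the right of $r_0$, so no boundary point of $\mathcal{C}_P$ can lie farther out on the diagonal. An alternative route is the level-set identity \eqref{eq:179}: the inequality $\kappa(t\one)>s$ holds iff $t\one\in s\mathcal{C}_P$, iff $t/s>r_0$.
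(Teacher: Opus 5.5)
Your proposal is correct and follows the paper's proof exactly: the coordinatewise sandwich $(\tau-M)\one\le\mu+\tau\one\le(\tau+M)\one$, monotonicity of $\kappa$ from Lemma~\ref{lem:radial-scale}, and the diagonal value $\kappa(t\one)=t/r(p)$ obtained from $r(p)\one\in\partial\mathcal{C}_P$. Your justification that $r(p)\one$ lies on $\partial\mathcal{C}_P$, via the positive ray $\{s\one:s>r_0\}$, and your remark that $\tau>M$ keeps all three points in $\Gamma_n\subset\Gamma_d$, are slightly more explicit than the paper's write-up.
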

\begin{proof}
    Since $|\mu_i|\le M$ for every $i$, we have that:
    \begin{equation*}
        (\tau-M)\one \le \mu+ \tau \one \le (\tau+M) \one.
    \end{equation*}
     By the monotonicity established by Lemma \ref{lem:radial-scale}, we have that:
     \begin{equation}\label{e kappa com}
         \kappa \big( (\tau-M)\one \big) \le \kappa (\mu+ \tau \one) \le \kappa \big( (\tau+M)\one \big).
     \end{equation}
     Next, we compute $ \kappa \big( (\tau-M)\one \big)$ and $\kappa \big( (\tau+M)\one \big)$. Because $P(t \one)=p(t)$,  we have that $r(p) \one \in \partial C_P$. This implies that:
     \begin{equation}\label{e kappa dia}
         \kappa \big( (\tau-M)\one \big) =\frac{\tau-M}{r(p)},\,\,\, \kappa \big( (\tau+M)\one \big) =\frac{\tau+M}{r(p)}.
     \end{equation}
     Combining (\ref{e kappa com}) and (\ref{e kappa dia}), we can conclude the proof of this Lemma.
\end{proof}

\begin{proof}[Proof of Theorem~\ref{main thm 2}]
\emph{Step 1: canonical centering and the subsolution condition.}
Define
\[
\widehat\omega_0:=\omega_0-\gamma\chi,\qquad
q_z(s):=p_z(s+\gamma),\qquad Q_z:=\Pol_n(q_z).
\]
Since $\gamma$ is constant and $\chi$ is closed,
$\widehat\omega_0$ is closed. The binomial expansion gives
\begin{equation}\label{eq:kahler-centered-coefficients}
\begin{gathered}
q_z(s)=s^d-\sum_{j=0}^{d-1}b_j(z)s^j,\qquad b_{d-1}=0,\\
b_j(z)=\sum_{k=j}^{d-1}\binom kj a_k(z)\gamma^{k-j}
-\binom dj\gamma^{d-j}\quad(0\le j\le d-1).
\end{gathered}
\end{equation}
The coefficients $b_j$ are constant for $1\le j\le d-1$.
The largest derivative roots of $q_z$ are
\[
\widehat r_i=r_i-\gamma,\qquad
\widehat r_0(z)>\widehat r_1\ge\cdots
\ge\widehat r_{d-1}=0.
\]
The lower roots are constant. Since
$p_z'(r_0(z))>0$, the implicit-function theorem makes
$r_0(z)$, and hence $\widehat r_0(z)$, smooth.
Thus the data satisfy Assumption~\ref{ass1}.

 Normalized polarization gives
\begin{equation}\label{eq:kahler-centered-polarization}
Q_z(\mu)=P_z(\mu+\gamma\one),\qquad
\widehat\Lambda_v:=\lambda_\chi(\widehat\omega_0+\ddbar v)
=\Lambda_v-\gamma\one.
\end{equation}
Consequently
\begin{equation}\label{eq:kahler-translated-components}
\mathcal{C}_{Q_z}=\mathcal{C}_{P_z}-\gamma\one,\qquad
\mathcal{C}_{Q_z}^{(1)}=\mathcal{C}_{P_z}^{(1)}-\gamma\one.
\end{equation}
Compatibility is preserved, since
$Q_z(\widehat\Lambda_v)=P_z(\Lambda_v)$.
By Lemma~\ref{lem:component-closure} and
Proposition~\ref{prop:radial-nesting},
\begin{equation}\label{eq:kahler-centered-zero-branch}
\partial \mathcal{C}_{Q_z}
=\{\mu\in C_{Q_z}^{(1)}:Q_z(\mu)=0\}
\subset\Gamma_d.
\end{equation}
This proves 
\begin{equation}\label{eq:kahler-canonical-admissibility}
\Lambda_\varphi(z)-\gamma \one \in\Gamma_d\quad(z\in X).
\end{equation}
for every
elliptic zero of $P_z$, independently of existence.

The finitely many strict mixed-derivative inequalities in
Proposition~\ref{prop:derived-component} persist under sufficiently
small $C^2$ perturbations. We may consequently replace a $C^2$
reference in (ii) by a smooth one. Fix such a reference and write
\[
\mu(z):=\widehat\Lambda_{\underline\varphi}(z)\in \mathcal{C}_{Q_z}^{(1)}.
\]
We do not assume that $\mu(z)$ belongs to $\Gamma_d$.

\emph{Step 2: a cohomologically compatible family.}
Let $V:=\int_X\chi^n$. For $\tau\ge0$, put
\begin{equation}\label{eq:kahler-compatible-family}
\begin{gathered}
\omega_{0,\tau}:=\widehat\omega_0+\tau\chi,\qquad
\Lambda_{v,\tau}:=\lambda_\chi(\omega_{0,\tau}+\ddbar v),\qquad
\mu_\tau:=\mu+\tau\one,\\
\ell(\tau):=V^{-1}\int_XQ_z(\mu_\tau)\chi^n,\qquad
q_{\tau,z}:=q_z-\ell(\tau),\qquad
Q_{\tau,z}:=Q_z-\ell(\tau).
\end{gathered}
\end{equation}
Here $\tau$ is a prescribed continuation parameter.
For a closed real $(1,1)$-form $\beta$,
\[
(\beta+\ddbar v)^k-\beta^k
=\ddbar\left(v\sum_{j=0}^{k-1}
(\beta+\ddbar v)^j\wedge\beta^{k-1-j}\right).
\]
Stokes' theorem, and the constancy
of the positive-order coefficients therefore give
\begin{equation}\label{eq:kahler-path-compatibility}
\int_XQ_{\tau,z}(\Lambda_{v,\tau})\chi^n=0
\qquad(v\in C^\infty(X,\R)).
\end{equation}
In particular, \eqref{eq:kahler-fixed-compatibility} implies
$\ell(0)=0$. Upward stability of $\mathcal{C}_{Q_z}^{(1)}$ gives $\mu_\tau\in \mathcal{C}_{Q_z}^{(1)}$,
and
\[
\ell'(\tau)=V^{-1}\int_X\D Q_z(\mu_\tau)\chi^n>0.
\]
Thus $\ell(\tau)> 0$ for $\tau >0$. Only the constant coefficient changes;
the derived component and all lower derivative roots remain fixed.
Moreover,
\[
q_{\tau,z}(\widehat r_1)
=q_z(\widehat r_1)-\ell(\tau)<0.
\]
Since $q_z'>0$ above $\widehat r_1$, the largest real root
$\widehat r_{0,\tau}(z)$ of $q_{\tau,z}$ is simple, smooth, and strictly larger
than $\widehat r_1$. Hence Assumption~\ref{ass1} holds along
the entire path.

\emph{Step 3: large-parameter existence and normalization.}
Monicity gives
\[
\ell(\tau)=\tau^d+O(\tau^{d-1}),\qquad
q_{\tau,z}(\tau(1\pm\eta))
=\tau^d((1\pm\eta)^d-1)+O(\tau^{d-1})
\]
uniformly in $z$, for each fixed $0<\eta<1$.
The opposite signs and positivity of the derivative above
$\widehat r_1$ imply
\begin{equation}\label{e asy r0tau}
\widehat r_{0,\tau}(z)/\tau\longrightarrow1
\quad\text{uniformly on }X.
\end{equation}
For large $\tau$, the reference form
$\omega_{0,\tau}+\ddbar\underline\varphi$ is positive,
so $\SH_d(X,\omega_{0,\tau})\ne\varnothing$.
Let $\kappa_{\tau,z}$ and $\sigma_\tau$ denote the radial scale
and slope for these data. If $|\mu_i|\le M$, then we can use Lemma \ref{lem kappa est} to get that:
\begin{equation}\label{e kap es}
\frac{\tau-M}{\widehat r_{0,\tau}(z)}
\le\kappa_{\tau,z}(\mu_\tau(z))
\le\frac{\tau+M}{\widehat r_{0,\tau}(z)}.
\end{equation}
By Lemma~\ref{res:7-4},
\begin{equation}\label{e squ sig}
\min_X\kappa_{\tau,z}(\mu_\tau)
\le\sigma_\tau\le
\max_X\kappa_{\tau,z}(\mu_\tau).
\end{equation}
Combining (\ref{e asy r0tau}), (\ref{e kap es}) and (\ref{e squ sig}), we can get that:
\begin{equation}
    \sigma_\tau\longrightarrow 1.
\end{equation}
Every coordinate of $\sigma_\tau^{-1}\mu_\tau$ tends uniformly
to $+\infty$. Since $C_{Q_z}^{(1)}$ contains all sufficiently large
diagonal points and is upward stable,
$\sigma_\tau^{-1}\mu_\tau\in \mathcal{C}_{Q_z}^{(1)}$ for large $\tau$.
Proposition~\ref{prop:subsolution} makes $\underline\varphi$
a subsolution at $\sigma_\tau$. Theorem~\ref{thm:radial}
supplies a smooth elliptic solution with normalization
$c_\tau=\sigma_\tau$.

To identify this normalization, set
\[
I_k(\tau):=\int_X\omega_{0,\tau}^k\wedge\chi^{n-k},\qquad
A_\tau:=\int_X(b_0+\ell(\tau))\chi^n.
\]
Integrating $Q_{\tau,z}(c_\tau^{-1}\Lambda_{\varphi,\tau})=0$
gives $\mathcal R_\tau(c_\tau)=0$, where
\begin{equation}\label{eq:kahler-normalization-polynomial}
\mathcal R_\tau(c):=
I_d(\tau)-\sum_{k=1}^{d-2}b_kc^{d-k}I_k(\tau)-c^dA_\tau.
\end{equation}
Equation~\eqref{eq:kahler-path-compatibility} gives
$\mathcal R_\tau(1)=0$. Since
\[
I_k(\tau)=V\tau^k+O(\tau^{k-1}),\qquad
A_\tau=V\tau^d+O(\tau^{d-1}),
\]
we have
\begin{equation}\label{eq:kahler-normalization-asymptotic}
(V\tau^d)^{-1}\mathcal R_\tau(c)\longrightarrow1-c^d
\quad\text{in }C^1([1/2,2]).
\end{equation}
For large $\tau$, $\mathcal R_\tau$ is strictly decreasing on
$[1/2,2]$, which contains both $c_\tau$ and $1$.
Consequently $c_\tau=1$. Fix one such finite parameter $T$.
This constructs an unscaled initial solution without inferring
normalization from compatibility alone.

\emph{Step 4: continuation to the original background.}
For $0\le\tau\le T$, consider
\begin{equation}\label{eq:kahler-unscaled-path}
Q_{\tau,z}(\Lambda_{\varphi,\tau})=0,\qquad
\Lambda_{\varphi,\tau}\in \mathcal{C}_{Q_z}^{(1)},\qquad
\int_X\varphi\chi^n=0.
\end{equation}
Applying \eqref{eq:kahler-centered-zero-branch} to $Q_{\tau,z}$,
every solution has
$\Lambda_{\varphi,\tau}\in\partial \mathcal{C}_{Q_{\tau,z}}\subset\Gamma_d$.

For openness, adjoin an additive scalar $a$ to the equation.
Its linearization in $(\varphi,a)$ is $(w,b)\mapsto Lw-b$,
where $L$ is smooth and uniformly elliptic, with no zeroth-order
term. Lemma~\ref{res:4-2} and the implicit-function theorem apply.
Integration using \eqref{eq:kahler-path-compatibility} forces
$a=0$. The elliptic branch condition persists under sufficiently
small perturbations.

We verify the uniform estimate dependence needed for closedness.
Choose $\delta>0$ with $\mu-2\delta\one\in \mathcal{C}_{Q_z}^{(1)}$ everywhere.
Then $\mu_\tau-2\delta\one\in \mathcal{C}_{Q_z}^{(1)}$ for every $\tau\ge0$.
On the compact parameter interval $[0,T]$, the first derivatives
there have a common positive lower bound $\nu$, and all mixed
derivatives of orders $1,\ldots,d$ are positive. Multi-affinity gives
\begin{equation}\label{eq:kahler-uniform-slices}
Q_{\tau,z}(\mu_\tau-2\delta\one+y)
\ge Q_{\tau,z}(\mu_\tau-2\delta\one)+\nu\sum_i y_i
\qquad(y\ge0).
\end{equation}
The constant term on the right is bounded on $[0,T]\times X$,
so all buffered zero slices have one common radius $R$.

The backgrounds and reference forms have uniform smooth bounds.
The lower roots are fixed, and the smooth top root has a uniformly
positive first gap. In particular, the tuple
\[
\s(\tau,z):=(\widehat r_{0,\tau}(z),\widehat r_1,
\ldots,\widehat r_{d-1})
\]
has uniformly bounded spatial derivatives through order two.
Lemma~\ref{lem:C0-estimate} gives a uniform $C^0$ bound.
Proposition~\ref{res:5-1} implies that:
\[
\|\ddbar\varphi\|_{C^0}
\le C\bigl(1+\|\nabla\varphi\|_{C^0}^2\bigr)
\]
with $C$ independent of $\tau$ and the solution.

The blow-up proof of Lemma~\ref{res:5-5} is uniform as well. Thus, We obtain uniform gradient and complex
Hessian bounds. Apply Evans--Krylov and Schauder estimates to the
equivalent concave equation
$\kappa_{\tau,z}(\Lambda_{\varphi,\tau})=1$.
This gives uniform higher estimates.

The solvable set in $[0,T]$ is nonempty, open, and closed.
It therefore contains $0$. Since $\ell(0)=0$,
\eqref{eq:kahler-centered-polarization} translates this solution
back to the original equation $P_z(\Lambda_\varphi)=0$.
This proves (ii)$\Longrightarrow$(i); the reverse implication
is immediate.

\emph{Step 5: uniqueness and unshifted admissibility.}
Two centered solution forms lie in the convex spectral
$\Gamma_d$ domain and satisfy
$\kappa_{0,z}(\widehat\Lambda_\varphi)=1$.
Linearizing the radial operator along the segment between them,
as in the uniqueness proof of Theorem~\ref{thm:radial}, gives a
uniformly elliptic operator with no zeroth-order term annihilating
their potential difference. Its coefficients are uniformly
positive because the segment is compact inside the spectral
$\Gamma_d$ domain and the coordinate derivatives of
$\kappa_{0,z}$ are positive. The strong maximum principle proves
uniqueness modulo constants.
If $\gamma\ge0$, upward stability of $\Gamma_d$ applied to
\eqref{eq:kahler-canonical-admissibility} gives
$\Lambda_\varphi\in\Gamma_d$.
\end{proof}

\bibliographystyle{amsplain}
\bibliography{ref}

@book{Bha97,
  author    = {Bhatia, Rajendra},
  title     = {Matrix Analysis},
  series    = {Graduate Texts in Mathematics},
  volume    = {169},
  publisher = {Springer-Verlag},
  address   = {New York},
  year      = {1997},
  doi       = {10.1007/978-1-4612-0653-8},
  mrnumber = {1477662}
}

@misc{CNX26,
  author       = {Chen, Gao and Nie, Sijie and Xu, Yulun},
  title        = {A numerical criterion for complex {Hessian} type equations on projective manifolds},
  year         = {2026},
  howpublished = {Preprint, \href{https://arxiv.org/abs/2608.03815v2}{arXiv:2608.03815v2}},
  url          = {https://arxiv.org/abs/2608.03815v2}
}

@article{DK17,
  author  = {Dinew, S{\l}awomir and Ko{\l}odziej, S{\l}awomir},
  title   = {Liouville and {Calabi--Yau} type theorems for complex {Hessian} equations},
  journal = {Amer. J. Math.},
  volume  = {139},
  number  = {2},
  year    = {2017},
  pages   = {403--415},
  doi     = {10.1353/ajm.2017.0009},
  url     = {https://arxiv.org/abs/1203.3995},
  mrnumber = {3636634}
}

@article{JPDemaillyPaun,
  author = {Demailly, Jean-Pierre and Paun, Mihai},
  title = {Numerical characterization of the {K}\"{a}hler cone of a compact {K}\"{a}hler manifold},
  journal = {Ann. of Math. (2)},
  fjournal = {Annals of Mathematics. Second Series},
  volume = {159},
  year = {2004},
  number = {3},
  pages = {1247--1274},
  issn = {0003-486X,1939-8980},
  mrclass = {32J27 (32Q15)},
  mrnumber = {2113021},
  mrreviewer = {Philippe P. Eyssidieux},
  doi = {10.4007/annals.2004.159.1247},
  url = {https://doi.org/10.4007/annals.2004.159.1247},
}

@article{GaoChen2021,
  author = {Chen, Gao},
  title = {The {J}-equation and the supercritical deformed {H}ermitian-{Y}ang-{M}ills equation},
  journal = {Invent. Math.},
  fjournal = {Inventiones Mathematicae},
  volume = {225},
  year = {2021},
  number = {2},
  pages = {529--602},
  issn = {0020-9910,1432-1297},
  mrclass = {53C55},
  mrnumber = {4285141},
  mrreviewer = {Adam Jacob},
  doi = {10.1007/s00222-021-01035-3},
  url = {https://doi.org/10.1007/s00222-021-01035-3},
}

@article{Datar2020ANC,
  author = {Datar, Ved V. and Pingali, Vamsi Pritham},
  title = {A numerical criterion for generalised {M}onge-{A}mp\`ere equations on projective manifolds},
  journal = {Geom. Funct. Anal.},
  fjournal = {Geometric and Functional Analysis},
  volume = {31},
  year = {2021},
  number = {4},
  pages = {767--814},
  issn = {1016-443X,1420-8970},
  mrclass = {53C55 (32W20 35K96)},
  mrnumber = {4317503},
  mrreviewer = {Anna M. Fino},
  doi = {10.1007/s00039-021-00577-1},
  url = {https://doi.org/10.1007/s00039-021-00577-1},
}

@article{ChuLeeTakhashi,
  author = {Chu, Jianchun and Lee, Man-Chun and Takahashi, Ryosuke},
  title = {A {N}akai--{M}oishezon type criterion for supercritical deformed {H}ermitian--{Y}ang--{M}ills equation},
  journal = {J. Differential Geom.},
  fjournal = {Journal of Differential Geometry},
  volume = {126},
  year = {2024},
  number = {2},
  pages = {583--632},
  issn = {0022-040X,1945-743X},
  mrclass = {53C55 (35A01 35J60)},
  mrnumber = {4728704},
  mrreviewer = {Kotaro Kawai},
  doi = {10.4310/jdg/1712344220},
  url = {https://doi.org/10.4310/jdg/1712344220},
}

@article{CollinsGabor,
  author = {Collins, Tristan C. and Sz\'{e}kelyhidi, G\'{a}bor},
  title = {Convergence of the {$J$}-flow on toric manifolds},
  journal = {J. Differential Geom.},
  fjournal = {Journal of Differential Geometry},
  volume = {107},
  year = {2017},
  number = {1},
  pages = {47--81},
  issn = {0022-040X,1945-743X},
  mrclass = {53C44 (53C55)},
  mrnumber = {3698234},
  mrreviewer = {Mijia Lai},
  doi = {10.4310/jdg/1505268029},
  url = {https://doi.org/10.4310/jdg/1505268029},
}

@misc{FM26a,
  author       = {Fang, Hao and Ma, Biao},
  title        = {{G{\aa}rding} polynomials},
  year         = {2026},
  howpublished = {Preprint, \href{https://arxiv.org/abs/2604.27755v2}{arXiv:2604.27755v2}},
  url          = {https://arxiv.org/abs/2604.27755v2}
}

@misc{FM26b,
  author       = {Fang, Hao and Ma, Biao},
  title        = {Ideal {G{\aa}rding} polynomials},
  year         = {2026},
  howpublished = {Preprint, \href{https://arxiv.org/abs/2607.16832v1}{arXiv:2607.16832v1}},
  url          = {https://arxiv.org/abs/2607.16832v1}
}

@misc{fu2026nakaimoishezontypecriteriahessian,
      title={Nakai-Moishezon type criteria for Hessian type equations on compact K\"ahler manifolds}, 
      author={Jixiang Fu and Xuan Li and Dekai Zhang and Ziyi Zhang},
      year={2026},
      eprint={2609.40238},
      archivePrefix={arXiv},
      primaryClass={math.DG},
      url={https://arxiv.org/abs/2609.40238}, 
}

@misc{GS24,
  author       = {Guo, Bin and Song, Jian},
  title        = {Sup-slopes and sub-solutions for fully nonlinear elliptic equations},
  year         = {2024},
  howpublished = {Preprint, \href{https://arxiv.org/abs/2405.03074v1}{arXiv:2405.03074v1}},
  url          = {https://arxiv.org/abs/2405.03074v1}
}

@article{song2020nakaimoishezoncriterionscomplexhessian,
  author = {{Song}, Jian},
  title = {Nakai--Moishezon criterions for complex Hessian equations},
  journal = {arXiv e-prints},
  year = {2020},
  month = dec,
  eid = {arXiv:2012.07956},
  pages = {arXiv:2012.07956},
  doi = {10.48550/arXiv.2012.07956},
  archiveprefix = {arXiv},
  eprint = {2012.07956},
  primaryclass = {math.DG},
  adsurl = {https://ui.adsabs.harvard.edu/abs/2020arXiv201207956S},
}

@article{HMW10,
  author  = {Hou, Zuoliang and Ma, Xi-Nan and Wu, Damin},
  title   = {A second order estimate for complex {Hessian} equations on a compact {K{\"a}hler} manifold},
  journal = {Math. Res. Lett.},
  volume  = {17},
  number  = {3},
  year    = {2010},
  pages   = {547--561},
  doi     = {10.4310/MRL.2010.v17.n3.a12},
  mrnumber = {2653687}
}

@article{Lin23,
  author  = {Lin, Chao-Ming},
  title   = {On the convexity of general inverse {$\sigma_k$} equations},
  journal = {J. Funct. Anal.},
  volume  = {285},
  number  = {7},
  year    = {2023},
  pages   = {Paper No. 110038},
  note    = {59 pp.},
  doi     = {10.1016/j.jfa.2023.110038},
  url     = {https://arxiv.org/abs/2209.11370}
}

@book{MOA11,
  author    = {Marshall, Albert W. and Olkin, Ingram and Arnold, Barry C.},
  title     = {Inequalities: Theory of Majorization and Its Applications},
  edition   = {Second},
  series    = {Springer Series in Statistics},
  publisher = {Springer},
  address   = {New York},
  year      = {2011},
  doi       = {10.1007/978-0-387-68276-1},
  mrnumber = {2759813}
}

@article{Collins201511FW,
  author = {Collins, Tristan C. and Jacob, Adam and Yau, Shing-Tung},
  title = {{$(1,1)$} forms with specified {L}agrangian phase: a priori estimates and algebraic obstructions},
  journal = {Camb. J. Math.},
  fjournal = {Cambridge Journal of Mathematics},
  volume = {8},
  year = {2020},
  number = {2},
  pages = {407--452},
  issn = {2168-0930,2168-0949},
  mrclass = {53C07 (14F08 14J32 53C55)},
  mrnumber = {4091029},
  mrreviewer = {Weisong Dong},
  doi = {10.4310/cjm.2020.v8.n2.a4},
  url = {https://doi.org/10.4310/cjm.2020.v8.n2.a4},
}

@article{Sze18,
  author  = {Sz{\'e}kelyhidi, G{\'a}bor},
  title   = {Fully non-linear elliptic equations on compact {Hermitian} manifolds},
  journal = {J. Differential Geom.},
  volume  = {109},
  number  = {2},
  year    = {2018},
  pages   = {337--378},
  doi     = {10.4310/JDG/1527040875},
  note    = {Numbered citations refer to \href{https://arxiv.org/abs/1501.02762v3}{arXiv:1501.02762v3}},
  url     = {https://arxiv.org/abs/1501.02762v3},
  mrnumber = {3807322}
}

@article{TWWY15,
  author  = {Tosatti, Valentino and Wang, Yu and Weinkove, Ben and Yang, Xiaokui},
  title   = {{$C^{2,\alpha}$} estimates for nonlinear elliptic equations in complex and almost complex geometry},
  journal = {Calc. Var. Partial Differential Equations},
  volume  = {54},
  number  = {1},
  year    = {2015},
  pages   = {431--453},
  doi     = {10.1007/s00526-014-0791-0},
  url     = {https://arxiv.org/abs/1402.0554},
  mrnumber = {3385166}
}

@article{CNS85,
  author   = {Caffarelli, Luis and Nirenberg, Louis and Spruck, Joel},
  title    = {The {Dirichlet} problem for nonlinear second-order elliptic equations.
              {III}. {Functions} of the eigenvalues of the {Hessian}},
  journal  = {Acta Math.},
  volume   = {155},
  number   = {3-4},
  year     = {1985},
  pages    = {261--301},
  doi      = {10.1007/BF02392544},
  mrnumber = {806416}
}

@article{Song2004OnTC,
  author = {Song, Jian and Weinkove, Ben},
  title = {On the convergence and singularities of the {$J$}-flow with applications to the {M}abuchi energy},
  journal = {Comm. Pure Appl. Math.},
  fjournal = {Communications on Pure and Applied Mathematics},
  volume = {61},
  year = {2008},
  number = {2},
  pages = {210--229},
  issn = {0010-3640,1097-0312},
  mrclass = {32Q20},
  mrnumber = {2368374},
  mrreviewer = {Julius Ross},
  doi = {10.1002/cpa.20182},
  url = {https://doi.org/10.1002/cpa.20182},
}

@article{TW10,
  author   = {Tosatti, Valentino and Weinkove, Ben},
  title    = {The complex {Monge--Amp{\`e}re} equation on compact {Hermitian} manifolds},
  journal  = {J. Amer. Math. Soc.},
  volume   = {23},
  number   = {4},
  year     = {2010},
  pages    = {1187--1195},
  doi      = {10.1090/S0894-0347-2010-00673-X},
  mrnumber = {2669712}
}

@article{Yau78,
  author   = {Yau, Shing-Tung},
  title    = {On the Ricci curvature of a compact {K\"ahler} manifold and the complex {Monge--Amp\`ere} equation, I},
  journal  = {Comm. Pure Appl. Math.},
  volume   = {31},
  number   = {3},
  year     = {1978},
  pages    = {339--411},
  doi      = {10.1002/cpa.3160310304},
  mrnumber = {0480350}
}

@article{fang2010classfullynonlinearflow,
  author = {Fang, Hao and Lai, Mijia and Ma, Xinan},
  title = {On a class of fully nonlinear flows in {K}\"{a}hler geometry},
  journal = {J. Reine Angew. Math.},
  fjournal = {Journal f\"{u}r die Reine und Angewandte Mathematik. [Crelle's Journal]},
  volume = {653},
  year = {2011},
  pages = {189--220},
  issn = {0075-4102,1435-5345},
  mrclass = {53C44 (32Q15 32W20 53C55)},
  mrnumber = {2794631},
  mrreviewer = {Yanir A. Rubinstein},
  doi = {10.1515/CRELLE.2011.027},
  url = {https://doi.org/10.1515/CRELLE.2011.027},
}

@article{Fang2023OnAF,
  author = {Fang, Hao and Ma, Biao},
  title = {On a fully nonlinear elliptic equation with differential forms},
  journal = {Adv. Math.},
  fjournal = {Advances in Mathematics},
  volume = {454},
  year = {2024},
  pages = {Paper No. 109867, 89},
  issn = {0001-8708,1090-2082},
  mrclass = {53C55},
  mrnumber = {4781476},
  mrreviewer = {Martin de Borbon},
  doi = {10.1016/j.aim.2024.109867},
  url = {https://doi.org/10.1016/j.aim.2024.109867},
}

@ARTICLE{2026arXiv260711296F,
       author = {{Fu}, Jixiang and {Zhang}, Dekai and {Zhang}, Ziyi},
        title = "{A Numerical Criterion for the 2-Hessian Equation on Compact K{\"a}hler Manifolds}",
      journal = {arXiv e-prints},
         year = 2026,
        month = jul,
          eid = {arXiv:2607.11296},
        pages = {arXiv:2607.11296},
          doi = {10.48550/arXiv.2607.11296},
archivePrefix = {arXiv},
       eprint = {2607.11296},
 primaryClass = {math.DG},
       adsurl = {https://ui.adsabs.harvard.edu/abs/2026arXiv260711296F}
}

@article{Donaldson99,
  author = {Donaldson, Simon K.},
  title = {Moment maps and diffeomorphisms},
  journal = {Asian J. Math.},
  fjournal = {Asian Journal of Mathematics},
  volume = {3},
  number = {1},
  year = {1999},
  pages = {1--16},
  doi = {10.4310/ajm.1999.v3.n1.a1},
  url = {https://doi.org/10.4310/ajm.1999.v3.n1.a1},
}

@article{Zhang17,
  author = {Zhang, Dekai},
  title = {Hessian equations on closed {Hermitian} manifolds},
  journal = {Pacific J. Math.},
  fjournal = {Pacific Journal of Mathematics},
  volume = {291},
  number = {2},
  year = {2017},
  pages = {485--510},
  doi = {10.2140/pjm.2017.291.485},
  url = {https://doi.org/10.2140/pjm.2017.291.485},
}

@article{Lin23solv,
  author = {Lin, Chao-Ming},
  title = {On the Solvability of General Inverse {$\sigma_k$} Equations},
  journal = {arXiv e-prints},
  year = {2023},
  eid = {arXiv:2310.05339},
  pages = {arXiv:2310.05339},
  doi = {10.48550/arXiv.2310.05339},
  archivePrefix = {arXiv},
  eprint = {2310.05339},
  primaryClass = {math.DG},
  url = {https://arxiv.org/abs/2310.05339},
}

@article{Lin24corr,
  author = {Lin, Chao-Ming},
  title = {Corrigendum to ``On the convexity of general inverse
           {$\sigma_k$} equations'' [J. Funct. Anal. 285(7)
           (2023) 110038]},
  journal = {J. Funct. Anal.},
  fjournal = {Journal of Functional Analysis},
  volume = {286},
  number = {11},
  year = {2024},
  pages = {Paper No. 110398},
  doi = {10.1016/j.jfa.2024.110398},
  url = {https://doi.org/10.1016/j.jfa.2024.110398},
}

@misc{fang2026spectralgaardingelliptic,
      title={Spectral G\aa rding Elliptic Equations}, 
      author={Hao Fang and Biao Ma},
      year={2026},
      eprint={2610.15434},
      archivePrefix={arXiv},
      primaryClass={math.AP},
      url={https://arxiv.org/abs/2610.15434}, 
}

\end{document}